\documentclass[11pt,reqno]{amsart}
\usepackage{fullpage}

\usepackage{graphicx}
\usepackage{hyperref}
\usepackage{amsmath,amsopn,amssymb,amsfonts,stmaryrd}
\usepackage{verbatim}
\usepackage{amsthm}
\usepackage{mathtools}
\usepackage{color}
\usepackage{enumitem}
\usepackage[framemethod=TikZ]{mdframed}
\usepackage{bbm}
\usepackage{mathrsfs}
\usepackage{booktabs}
\usepackage{caption}
\usepackage{bm}
\usepackage{tensor}
\usepackage{cancel}

\usepackage{mathrsfs}
\usepackage[scr=boondox]{mathalfa}

\newcommand{\IR}{{\mathbb R}}
\newcommand{\IC}{{\mathbb C}}
\newcommand{\C}{\mathbb{C}}
\newcommand{\Z}{{\mathbb Z}}
\newcommand{\N}{{\mathbb N}}
\newcommand{\IQ}{{\mathbb Q}}
\newcommand{\IH}{{\mathbb H}}
\renewcommand{\H}{\mathbb{H}}

\newcommand{\CA}{\mathscr{A}}

\newcommand{\CD}{\mathscr{D}}

\newcommand{\CF}{\mathcal{F}}

\newcommand{\cd}{\mathscr{d}}

\newcommand{\sgn}{\mbox{sgn}}

\newcommand{\SU}{\mathrm{SU}}

\newcommand{\SL}{\mathrm{SL}}

\newcommand{\zz}{\mathfrak{z}}



\theoremstyle{plain}
\newtheorem{thm}{Theorem}[section]

\newtheorem{lem}[thm]{Lemma}
\newtheorem{lemma}[thm]{Lemma}
\newtheorem{prop}[thm]{Proposition}

\theoremstyle{definition}

\newtheorem{rem}[thm]{Remark}

\numberwithin{equation}{section}

\newcommand{\pmat}[1]{\left( \smallmatrix #1 \endsmallmatrix \right)}
\newcommand{\mat}[1]{\left( \begin{matrix} #1 \end{matrix} \right)}

\renewcommand{\sgn}{\textnormal{sgn}}

\def\lp{\left(}
\def\rp{\right)}
\def\lb{\left[}
\def\rb{\right]}

\def\a{\alpha}
\def\b{\beta}
\def\d{\delta}

\def\l{\lambda}

\def\w{\omega}

\def\vth{\vartheta}

\def\e{\varepsilon}

\def\s{\sigma}
\def\g{\gamma}
\def\t{\tau}

\def\DD{\Delta}
\def\LL{\Lambda}

\def\del{  \partial}

\newcommand{\ch}{\operatorname{ch}}

\newcommand{\y}{{\rm \mathbbm{y}}}
\newcommand{\zzb}{{\rm \mathbbm{z}}}
\newcommand{\mmb}{{\rm \mathbbm{m}}}

\renewcommand{\sgn}{{\rm sgn}}

\def\wt{\widetilde}
\def\wh{\widehat}
\def\bar{\overline}
\newcommand{\andd}{\quad \mbox{ and } \quad}
\newcommand{\where}{\quad \mbox{ where }}

\setlist[itemize]{noitemsep, topsep=0pt}

\makeatletter
\newcommand{\vast}{\bBigg@{2}}
\newcommand{\Vast}{\bBigg@{5}}
\makeatother

\renewcommand{\pmod}[1]{\    \left(  \mathrm{mod} \,  #1 \right)}
\newcommand{\erf}{{\mathrm{erf}}}

\newcommand{\rint}[2]{\tensor*[_{\ *\!\!\!}]{\int}{_{#1}^{#2}}}
\newcommand{\IZ}{{\mathbb{Z}}}
\newcommand{\MM}{\mathfrak{M}}
\newcommand{\FF}{\mathfrak{F}}

\makeatletter
\newcommand{\nolisttopbreak}{\par\nobreak\@afterheading} 
\makeatother

\DeclareRobustCommand{\SkipTocEntry}[5]{}

\title{Higher Depth False Modular Forms}
\author{Kathrin Bringmann}
\address{University of Cologne, Department of Mathematics and Computer Science, Weyertal 86-90, 50931 Cologne, Germany}
\email{kbringma@math.uni-koeln.de}

\author{Jonas Kaszian}
\address{Max Planck Institute for Mathematics, Vivatsgasse 7, 53111 Bonn, Germany}
\email{jonask@mpim-bonn.mpg.de}

\author{Antun Milas}
\address{Department of Mathematics, SUNY-Albany, Albany NY 12222}
\email{amilas@albany.edu}

\author{Caner Nazaroglu}
\address{University of Cologne, Department of Mathematics and Computer Science, Weyertal 86-90, 50931 Cologne, Germany}
\email{cnazarog@math.uni-koeln.de}

\thanks{The first and the fourth author are supported by the Deutsche Forschungsgemeinschaft (DFG) Grant No. BR 4082/5-1. The third author was partially supported by the NSF grant 2101844 and the Simons Collaboration Grant for Mathematicians 709563. The research of the fourth author is  supported by the SFB/TRR 191 ``Symplectic Structures in Geometry, Algebra and Dynamics", funded by the DFG (Projektnummer 281071066 TRR 191).}

\begin{document}

\begin{abstract}
False theta functions are functions that are closely related to classical theta functions and mock theta functions. In this paper, we study their modular properties at all ranks by forming modular completions analogous to modular completions of indefinite theta functions of any signature and thereby develop a structure parallel to the recently developed theory of higher depth mock modular forms.  
 We then demonstrate this theoretical base on a number of examples up to depth three coming from characters of modules for the vertex algebra $W^0(p)_{A_n}$, $1 \leq n \leq 3$, 
 and from $\hat{Z}$-invariants of $3$-manifolds associated with gauge group $\SU(3)$.
\end{abstract}

\maketitle

\tableofcontents

\section{Introduction and Statement of Results}\label{sec:intro}
False theta functions are similar to theta functions, which are classical examples of modular forms and Jacobi forms; they differ, however, by the inclusion of certain sign factors that break their modular symmetry. False theta functions have appeared in several contexts including representation theory, quantum topology, and theoretical physics. 
In representation theory, they occur essentially as characters of modules for the $(1,p)$-singlet algebras and of higher rank $W$-algebras $W^0(p)_L$ \cite{BKM1,BM2, Creutzig0,CM2}. They have been also identified within character formulas of parafermionic vertex algebras coming from Fourier coefficients of negative index Jacobi forms \cite{BKMN, BKMZ}. In quantum topology, false theta functions capture unified WRT invariants of Seifert $3$-manifolds \cite{GM, Hi2} and also occur in the computation of tails of colored Jones polynomials of alternating knots \cite{BO, Garo, KO, Yuasa}. Very recently, they prominently featured in physics through the work of Gukov, Pei, Putrov, and Vafa on homological blocks of $3$-manifolds $\hat{Z}_{M, \bm a}(q)$ in Chern--Simons theory \cite{GPPV}. These remarkable $q$-series, called \textit{$\hat{Z}$-invariants}, can be expressed as false theta functions (at least if $M$ is a plumbed $3$-manifold). The $\hat{Z}$-invariants,  their refinements \cite{GM}, and higher rank generalizations \cite{Park}, are conjecturally (higher depth) quantum modular forms \cite{BMM,CCFGH}.

In the simplest case of false theta functions with a single sign function, the modular transformation properties can be simply deduced by realizing these functions as holomorphic Eichler integrals of unary theta functions. For example, the unary false theta function
\begin{equation*}
\psi (\t) := i \sum_{n \in \IZ} \sgn \lp n + \frac{1}{2} \rp (-1)^n q^{\frac{1}{2} \lp n + \frac{1}{2} \rp^2},
\end{equation*}
with $\sgn(x):=\frac{x}{\lvert x\rvert}$ for $x\neq 0$ and $\sgn(0):=0$,
can be realized as the holomorphic Eichler integral
\begin{equation*}
\psi (\t) = i \int_\t^{\t + i \infty} \frac{\eta (w)^3}{\sqrt{i (w-\t)}} d w,
\end{equation*}
where $\eta (\t) := q^\frac{1}{24} \prod_{n \geq 1} (1-q^n)$ is the \textit{Dedekind eta-function} and the integration path is appropriately chosen in view of the double-valuedness of the square-root (see e.g.~\cite{BN} for more details). This realization immediately tells us that $\psi (\t)$ transforms like a modular form up to suitably defined cycle integrals of the cusp form $\eta (w)^3$, which can be realized as a classical unary theta function. Such classical considerations are sufficient for most purposes involving unary false theta functions. For example, the modular properties used to obtain Rademacher-type exact formulae for Fourier coefficients of $\frac{\psi (\t)}{\eta (\t)^2}$ derived in \cite{BN} can be understood from this picture. Similarly, these considerations imply that the holomorphic Eichler integral representation of $\psi (\t)$ makes sense not only on $\IH$ but also on $\IQ$ (since cusp forms are exponentially decaying towards each rational point). This shows that we can define a function $f:\mathbb{Q}\to \IC$ by taking the limit
\begin{equation*}
f(x) := \lim_{t \to 0^+} \psi (x+it) 
\end{equation*}
and it also transforms like a modular form up to the aforementioned cycle integrals of $\eta^3$. This is a remarkable fact since $f$ does not extend to any continuous function on the reals, whereas the cycle integral that gives its {obstruction to modularity} yields a real-analytic function on $\IR$ except for one point. Such functions on rational numbers with more ``regular" obstructions to modularity (including vector-valued generalizations with suitable multiplier systems) are called \emph{quantum modular forms} \cite{Zagier}.\footnote{
One can also obtain the function $f: \IQ \to \IC$ through rational limits of the non-holomorphic Eichler integral
\begin{equation*}
\int_{-\bar{\t}}^{i \infty} \frac{\eta (w)^3}{\sqrt{-i (w+\t)}} d w .
\end{equation*}
As reviewed below, mock modular forms are holomorphic functions on $\IH$ which form real-analytic modular forms upon addition of such integrals. For example, the particular non-holomorphic Eichler integral above for $\eta^3$ is related to the mock modular form with Fourier coefficients 
\begin{equation*}
2 q^{-\frac{1}{8}} \lp -1 + 45 q + 231 q^2 + 770 q^3 + 2277 q^4 
+ 5796 q^5 + 13915 q^6 + \ldots \rp
\end{equation*}
that led to the observation of Mathieu moonshine \cite{EOT}.
This point of view provides an alternative approach to the quantum modular properties that can be pursued by studying mock modular forms replicating the asymptotic behavior of false theta functions near rational numbers 
(this is an observation due to Zwegers - see also \cite{LZ}).
}

The results of \cite{BN} offer an alternative repackaging of false theta functions where their modular properties can be studied in a way similar to indefinite theta functions by constructing modular completions (these are modular covariant functions which reproduce these false or indefinite theta functions at their boundary - see below for more details). 
This point of view is particularly suitable for generalizations to false theta functions with more factors of sign functions. That is precisely the strategy we follow in this paper to formulate a general framework, where such functions can be studied through modular completions constructed using generalized error functions defined by \cite{ABMP}. Our results also point out how the holomorphic Eichler integral representations above generalize to this more general setting (by using Lemma \ref{sign.lemma}, which is in turn naturally derived by studying properties of generalized error functions). It turns out that a false theta function with $r$ factors of sign functions can be expressed as a linear combination of iterated integrals of modular forms.
For example, it was found in Proposition 3.2 of \cite{BKMN} that the general $r=2$ false theta function\footnote{Throughout this article, we denote vectors with bold letters.}
\begin{equation*}
\Omega (\t) := 
\sum_{\bm{n} \in \IZ^2 + \bm{\a}}
\sgn (n_1) \sgn (n_2) q^{\frac{1}{2} (an_1^2+2bn_1n_2+cn_2^2)},
\end{equation*}
where $\bm{\a} \in \IQ$ and $a,b,c$ are integers giving a positive definite quadratic form in the exponent, can be expressed as (for $\bm{\a} \not \in \IZ^2$ and $\DD := ac-b^2$)
\begin{equation*}
\Omega (\t) =
\sqrt{\DD} \int_\t^{\t + i \infty}  \int_\t^{\w_1}
\frac{\Theta_1 (\bm{w}) + \Theta_2 (\bm{w})}{\sqrt{i(w_1-\t)} \sqrt{i (w_2 - \t)}} dw_2 dw_1
-\frac{2}{\pi} \arctan \lp \frac{b}{\sqrt{\DD}} \rp \Theta (\t),
\end{equation*}
where we define the ordinary theta function
\begin{align*}
\Theta (\t)
&:= 
\sum_{\bm{n} \in \IZ^2 + \bm{\a}}  q^{\frac{1}{2} \left(an_1^2+2bn_1n_2+cn_2^2\right)}
\end{align*}
and
\begin{align*}
\Theta_1 (\bm{w})
&:= 
\sum_{\bm{n} \in \IZ^2 + \bm{\a}} n_1 \lp n_2 + \frac{b}{c} n_1 \rp 
e^{\pi i \frac{\DD}{c} n_1^2 w_1 + \pi i c \lp n_2 + \frac{b}{c} n_1 \rp^2 w_2},
\\
\Theta_2 (\bm{w})
&:= 
\sum_{\bm{n} \in \IZ^2 + \bm{\a}} n_2 \lp n_1 + \frac{b}{a} n_2 \rp 
e^{\pi i \frac{\DD}{a} n_2^2 w_1 + \pi i a \lp n_1 + \frac{b}{a} n_2 \rp^2 w_2},
\end{align*}
which can be written in terms of weight $\frac{3}{2}$ unary theta functions.
However for general $r$, the number of such Eichler-type iterated integrals is at least $r!$ and hence quickly proliferates with increasing $r$. So although this simple approach to unary false theta functions for $r=1$ leads to a very managable generalization for $r=2$ as in \cite{BKMN}, it quickly becomes cumbersome for larger values of $r$. 
The approach presented here in terms of modular completions, on the other hand, provides a compact and straightforward way to package and study these functions. Furthermore, to deal with additional polynomial insertions, we derive a set of modular covariant differential operators acting on the modular completion. These operators automatically produce all the modular covariant terms that have to be introduced in an ad-hoc fashion in an approach based on  Lemma \ref{sign.lemma}. All in all, these points suggest that the approach we assume here based on modular completions (following \cite{BN}) is the natural framework for false theta functions at any rank and with arbitrary insertions of sign functions and polynomials.

To understand these completions, we first recall mock modular forms, which are themselves functions that have well-behaved obstructions to modularity. They were first introduced by Ramanujan in the form of 17 examples of the so-called mock theta functions. Their modular properties however remained largely mysterious until the seminal work of Zwegers \cite{Zwegers}, which described mock modular forms as the holomorphic parts of certain real-analytic functions (called their {\it modular completion}) that transform like modular forms. More specifically, a {\it (mixed) mock modular form} $h$ of weight $k$ is a holomorphic function on $\H$, which is the holomorphic part of a real-analytic function $\wh h(\t,\bar\t)$ that transforms like a weight $k$ modular form and that satisfies (see \cite{DMZ})\footnote{The ``hats" used to denote modular completions should not be confused with the hats appearing in $\hat Z$-invariants; especially because the latter can itself have modular completions. We use smaller hats for homological blocks to make the distinction.}
\begin{equation*}
	\frac{\del   }{\del \bar{\tau}}\wh{h} \in \bigoplus_j  \t_2^{r_j} \MM_{k + r_j} \otimes \bar{\MM_{2+r_j}},
\end{equation*}
where $\tau_2:={\rm Im}(\tau)$, and $\MM_{r}$ denotes the space of weight $r$ (weakly holomorphic) modular forms  (possibly on a subgroup of $\mathrm{SL}_2 (\IZ)$ and/or with a vector-valued multiplier system).
An important class of functions that fit into this framework are indefinite theta functions on signature $(n-1,1)$ lattices \cite{Zwegers}. Just like false theta functions, these indefinite theta functions include various sign factors, which break the modular properties expected from ordinary theta functions, but are also essential for the convergence of the associated indefinite theta functions.

These results on indefinite theta functions of signature $(n-1,1)$ were then generalized to construct modular completions for indefinite theta functions of arbitrary signature \cite{ABMP, FK, Kudla, Na}, showing that they yield higher depth mock modular forms as defined in the unpublished work of Zagier and Zwegers.\footnote{Note that the definition of Zagier--Zwegers first appeared in literature in \cite{Raum}, where ``product type'' indefinite theta functions were investigated.} The space $\MM_k^d$ of mock modular forms with weight $k$ and depth $d$, to which these indefinite theta functions fit, as well as the space of their modular completions $\wh\MM_k^d$ can be defined recursively starting with those modular forms at depth zero, which are nothing but ordinary modular forms (so that $\MM_k^0=\wh\MM_k^0=\MM_k$). More precisely, a {\it mock modular form} $h:\H\to\C$ of {\it weight} $k$ and {\it depth} $d\ge1$ is a holomorphic function, which is the holomorphic part of a real-analytic function $\wh h(\t,\bar\t)$ transforming like a weight $k$ modular form and satisfying 
\begin{equation*}
	\frac{\del   }{\del \bar{\tau}}\wh{h} \in \bigoplus_j  \t_2^{r_j} \wh{\MM}^{d-1}_{k + r_j}  \otimes \bar{\MM_{2+r_j}}  .
\end{equation*}

The example of mock modular forms above combined with the approach of \cite{BN} suggests similarly building vector spaces of \textit{false modular forms} of weight $k$ and depth $d$, which we denote by $\FF^d_k$, and their modular completions, which we denote by $\wh{\FF}^d_k$ (so that $\FF_k^0 = \wh{\FF}_k^0 = \MM_k$). An element of $\FF^d_k$ is a holomorphic function $f$ on $\IH$ with an associated modular completion $\wh{f}\in \wh{\FF}^d_k$, where  $(i (w-\t) )^{\frac{\ell}{2}} \wh{f} (\tau,w)$ is a holomorphic function for $(\tau,w) \in \IH \times \IH \setminus \{ \tau =w \}$ and for some $\ell \in \{0,1\}$.\footnote{Thereby, for $\ell = 1$, the completion $\wh{f} (\tau,w)$ is holomorphic on a double cover of $\IH \times \IH$ with a branch point at $w = \tau$ in the $w$-plane.} 
The completion $\wh{f}$ reduces to $f$ in the $w-\t \to i \infty$ limit\footnote{Throughout this article, we assume with the notation $\lim_{w \to \t + i \infty}$ that the limit is taken along the vertical line in $\IH$ passing from $\t$.} and it satisfies a modular transformation of the form 
\begin{equation} \label{2-modular}
\wh{f} \lp \frac{a \t + b}{c \t + d},  \frac{a w + b}{c w + d} \rp 
= \chi_{\t,w} (\g)^\ell (c \t + d)^k \rho(\g) \wh{f} (\t,w).
\end{equation}
Here $\g = \pmat{ a & b \\ c & d}$ is an element of (a subgroup of) $\mathrm{SL}_2 (\IZ)$, $\rho(\g)$ is a possibly vector-valued multiplier system, and $\chi_{\t,w} (\g) \in \{ \pm 1 \}$ is a sign factor that keeps track of branch cuts one may use on the factor $(i(w-\t))^{\frac\ell2}$ to make $\wh{f} (\t,w)$ single-valued (see Theorem \ref{co:modularity} for more details). With this transformation law, the modular completions studied here are examples of bimodular forms defined in the unpublished work of Stienstra and Zagier.\footnote{
A {\it bimodular form} of {\it biweight} $(k,m)$ is a holomorphic function $g$ that satisfies the transformation property 
\begin{align*}
g \lp \frac{a \t + b}{c \t + d},  \frac{a w + b}{c w + d} \rp 
=  (c \t + d)^k(c w + d)^m \rho(\gamma) g (\t,w) .
\end{align*}
The notion can be flexibly used by assuming the domain of $g$ is $\IH \times \IH$, $\IH \times \IH \setminus \{ \t = w\}$, a double cover of $\IH \times \IH \setminus \{ \t = w\}$ (in which case one also needs a sign factor $\chi_{\t,w} (\gamma)$ to keep track of the branch on the double cover), or $\IH \times \IH^-$ where $\IH^-$ is the lower half-plane (which are relevant for mock modular forms by analytically continuing their real-analytic modular completion).
}
More specifically, the modular completions here are subclasses of bimodular forms, for which we require the derivative of $\wh{f}(\t,w)$ in $w$ to satisfy
\begin{equation}\label{eq:false_completion_derivative}
\frac{\del  }{\del w} \wh{f}(\t, w)
= \sum_j \lp i (w-\t) \rp^{r_j} \wh{g}_j (\t,w) h_j (w),
\end{equation}
where $\wh{g}_j \in \wh{\FF}^{d-1}_{k + r_j}$ and $h_j \in \MM_{2+r_j}$ in a way similar to mock modular forms (of higher depth). The modular completions of false theta functions with a single sign function, which were studied in \cite{BN}, fit into this framework and yield examples of false modular forms at depth one (or more simply false modular forms, where we leave the phrase ``depth one" implicit).  Another simple example is the Eisenstein series $E_2 (\tau)$, which can be recognized as a false modular form of weight two with a modular completion 
\begin{equation}\label{E+w}
\widehat{E}_2(\tau,w):=E_2(\tau)-\frac{6 i}{\pi (\tau-w)}.
\end{equation}
 Of course, this is a rather degenerate example since $E_2 (\tau)$ can also be viewed as a mock modular form of weight two and depth one.

In this paper, we show that false theta functions with arbitrary rank and an arbitrary number of sign function insertions fit into these spaces by providing modular completions as described here (and including elliptic variables that lead to Jacobi form-like generalizations) and by showing how one can obtain further examples by constructing differential operators that are covariant under the modular action. To give a more concrete statement, let $\LL$ be a lattice of rank $n$ with an integral, positive definite bilinear form $B$ (where $B(\bm{x},\bm{y}):= \bm{x}^T A \bm{y}$ with $A\in\IZ^{n\times n}$ a symmetric, positive definite matrix) and an associated quadratic form $Q$ (so that $Q(\bm{n}) = \frac{1}{2} B(\bm{n},\bm{n})$ for all $\bm{n} \in \IR^n$). Moreover, let $\bm{\ell} \in \LL$ be a \textit{characteristic vector} satisfying $Q(\bm{n}) + \frac{1}{2} B(\bm{\ell},\bm{n}) \in \Z$ for every $\bm{n} \in \LL$. For $\bm{\mu} \in \LL^* / \LL$, $\t\in\mathbb{H}$, and a set of $r$ linearly independent vectors $\bm{m_1}, \ldots, \bm{m_r} \in \IR^n$ gathered in a rank $r$ matrix $M := \lp \bm{m_1}, \ldots, \bm{m_r} \rp\in \IR^{n\times r}$, we define the \textit{false theta function}
\begin{align} \label{false.gen}
F_{Q,\bm{\mu}, \bm \ell,M} (\tau) := 
\sum_{\bm{n} \in \LL + \bm{\mu} + \frac{\bm{\ell}}{2}}
 \sgn ( B ( M,  \bm{n} ))	e^{\pi i B \lp \bm{n}, \bm\ell \rp} q^{Q(\bm{n})} ,
\end{align}
where $ \sgn ( B ( M,  \bm{n} )) := \prod_{j=1}^r \sgn (B (\bm{m_j}, \bm{n}))$. The main mathematical object of interest in this paper is the following modular completion (that also includes elliptic variables), which we define for $\bm{z},\bm{\mathfrak{z}} \in \IC^n$ and $w, \t \in \mathbb{H}$ with $w\neq \t $ as
\begin{equation}\label{eq:hat_psi_definition}
\wh{\Psi}_{Q,\bm{\mu},\bm{\ell},M} ( \bm{z}, \bm{\zz}; \tau, w)   
:= \sum_{\bm{n} \in \LL + \bm{\mu} + \frac{\bm{\ell}}{2}}
E_{Q,M} \lp - i \sqrt{i (w-\t)}  \lp  \bm{n} + \frac{\bm{\zz} -\bm{z}}{w-\t} \rp \rp
q^{Q(\bm{n})} e^{2 \pi i B \lp \bm{n},\bm{z} + \frac{\bm{\ell}}{2} \rp},
\end{equation}
where we define the square-root with its principal value and where following \cite{ABMP} the \emph{generalized error function}, $E_{Q,M}$, is given by
\begin{equation}\label{eq:generalized_error_definition}
E_{Q,M} ( \bm{x} ) 
:= 
\det(A)^\frac12
\int_{\IR^n} \sgn  (B (M,\bm{y} ) )
e^{-2 \pi Q\lp \bm{y}-\bm{x}\rp  } d^n \bm{y}.
\end{equation}
From now on, we omit $Q$, $\bm{\ell}$, and $M$ from the notation $\wh{\Psi}_{Q,\bm{\mu}, \bm{\ell}, M}$ if they are clear from the context. We also use the notation $\wh{\Psi}_{Q,\bm{\mu},\bm{\ell},M} (\tau, w) := \wh{\Psi}_{Q,\bm{\mu},\bm{\ell},M} ( \bm{0}, \bm{0}; \tau, w)$.

Given these definitions, we can now state our main result that shows that $\wh{\Psi}$ is invariant under modular and elliptic transformations (which one could call ``bi-Jacobi transformation properties" analogous to bimodular forms). For this, set
\begin{equation*}
	\chi_{\t,w} := \sqrt{\frac{i (w-\t)}{w \t} } 
	\frac{\sqrt{w} \sqrt{\t} }{\sqrt{i (w-\t) }} \in \{ \pm 1 \}.
\end{equation*}

\begin{thm}\label{co:modularity}
\ \begin{enumerate}[leftmargin=*,label={\rm (\arabic*)}]
\item For $\bm{m},\bm{r} \in \LL$, we have the elliptic transformations
\begin{equation*}
\wh{\Psi}_{\bm{\mu}} ( \bm{z} + \bm{m}\t + \bm{r}, \bm{\zz}+ \bm{m}w + \bm{r}; \tau, w)  = 
(-1)^{2Q(\bm{m}+\bm{r})} q^{-Q(\bm{m})} e^{-2\pi i B(\bm{m},\bm{z})}
\wh{\Psi}_{\bm{\mu}} ( \bm{z}, \bm{\zz}; \tau, w).
\end{equation*}
\item We have the modular transformations
\begin{align*}
\wh{\Psi}_{\bm{\mu}} ( \bm{z}, \bm{\zz}; \tau+1, w+1) &=
e^{2\pi iQ \lp \bm{\mu} + \frac{\bm{\ell}}{2} \rp}
\wh{\Psi}_{\bm{\mu}} ( \bm{z}, \bm{\zz}; \tau, w), \\
\wh{\Psi}_{\bm{\mu}} \lp \frac{\bm{z}}{\t}, \frac{\bm{\zz}}{w}; 
-\frac{1}{\tau}, -\frac{1}{w} \rp
&= \chi_{\t,w}^r
\frac{(-i \t)^{\frac{n}{2}} e^{-\pi i Q(\bm{\ell})}}{\sqrt{|\LL^* / \LL |}} 
e^{2\pi i \frac{Q(\bm{z})}{\t}}
\sum_{\bm{\nu} \in \LL^*/\LL} e^{-2\pi i B(\bm{\mu},\bm{\nu})}
\wh{\Psi}_{\bm{\nu}} ( \bm{z}, \bm{\zz}; \tau, w) .
\end{align*}

\item The limit $\lim_{w \to \t + i \infty} \wh{\Psi}_{Q,\bm{\mu},\bm{\ell},M} (\tau, w)$ yields the false theta function $F_{Q,\mu, \bm \ell,M} (\tau)$ up to possible corrections including false theta functions with fewer sign function insertions. If the vectors in $M$ are elements of $\LL$, the resulting limit is a weight $\frac{n}{2}$, depth $r$ false modular form with modular completion $\wh{\Psi}_{Q,\bm{\mu},\bm{\ell},M} (\tau, w)$. 

\end{enumerate}
\end{thm}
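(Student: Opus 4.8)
The plan is to establish the three parts by reducing everything to analytic properties of the generalized error function $E_{Q,M}$ and a Poisson summation argument. For part (1), the elliptic transformations, I would substitute $\bm{z}\mapsto \bm{z}+\bm{m}\t+\bm{r}$ and $\bm{\zz}\mapsto \bm{\zz}+\bm{m}w+\bm{r}$ directly into \eqref{eq:hat_psi_definition} and observe that the shift is engineered so that the argument of $E_{Q,M}$ becomes $-i\sqrt{i(w-\t)}\,(\bm{n}+\bm{m}+\tfrac{\bm{\zz}-\bm{z}}{w-\t})$; thus the sum over $\bm{n}\in\LL+\bm{\mu}+\tfrac{\bm{\ell}}{2}$ is simply reindexed by $\bm{n}\mapsto\bm{n}-\bm{m}$. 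Collecting the leftover factors from $q^{Q(\bm{n})}$ and $e^{2\pi i B(\bm{n},\bm{z}+\bm{\ell}/2)}$ under this shift, and using that $\bm{\ell}$ is characteristic (so $Q(\bm{m})+\tfrac12 B(\bm{\ell},\bm{m})\in\Z$, which converts the half-integer ambiguities into the stated sign $(-1)^{2Q(\bm{m}+\bm{r})}$), yields the claimed identity. The $r$ and $\bm{m}$ pieces behave slightly differently ($\bm{r}$ shifts both $\bm{z}$ and $\bm{\zz}$ equally so it drops out of $\bm{\zz}-\bm{z}$) but both are routine.

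For part (2), the $\t\mapsto\t+1$ transformation is immediate from $q^{Q(\bm{n})}$ picking up $e^{2\pi i Q(\bm{n})}$ and $Q(\bm{n})\equiv Q(\bm{\mu}+\tfrac{\bm{\ell}}{2})\pmod{1}$ on the coset (again using that $\bm{\ell}$ is characteristic). The $S$-transformation is the heart of the argument: I would write $\wh{\Psi}$ as a sum over $\bm{n}$ of a Schwartz-type function of $\bm{n}$ — explicitly, after completing the square in the exponent, the summand is $e^{2\pi i B(\bm{z},\bm{\zz})/(w-\t)}$ times $\det(A)^{1/2}$ times an $n$-dimensional Gaussian integral against $\sgn(B(M,\bm{y}))$ — and apply Poisson summation on the lattice $\LL$ (shifted by $\bm{\mu}+\tfrac{\bm{\ell}}{2}$). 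The Fourier transform of the summand with respect to $\bm{n}$ is computed by combining the Gaussian defining $E_{Q,M}$ with the Gaussian $q^{Q(\bm{n})}$: this is exactly the computation in \cite{ABMP} showing $E_{Q,M}$ is essentially self-dual under Fourier transform against a Gaussian, up to the volume factor $\det(A)^{-1/2}=|\LL^*/\LL|^{-1/2}\cdot(\text{powers})$ and a rescaling of the width by $i(w-\t)\mapsto -1/(i(w-\t))\cdot$(stuff). Carefully tracking the branch of the square-root through $\t\mapsto-1/\t$, $w\mapsto-1/w$ produces the sign $\chi_{\t,w}^r$ (one factor of $\chi_{\t,w}$ per sign function, i.e.\ per column of $M$, since each contributes one square-root factor inside $E_{Q,M}$), the automorphy factor $(-i\t)^{n/2}$ from the $n$-dimensional Gaussian, the phase $e^{-\pi i Q(\bm{\ell})}$ and $e^{2\pi i Q(\bm{z})/\t}$ from reorganizing the $\bm{\ell}$- and $\bm{z}$-dependent exponents, and finally the dual sum $\sum_{\bm{\nu}\in\LL^*/\LL}e^{-2\pi i B(\bm{\mu},\bm{\nu})}$ from Poisson summation on the coset. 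I would double-check convergence/absolute summability so that Poisson summation applies — this follows since $Q$ is positive definite, $\Im(\t)>0$, and $E_{Q,M}$ is bounded.

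For part (3), the limit $w\to\t+i\infty$: as $\Im(w)\to\infty$ with $\Re(w)=\Re(\t)$, we have $i(w-\t)\to\infty$ along the positive real axis, so $-i\sqrt{i(w-\t)}(\bm{n}+\tfrac{\bm{\zz}-\bm{z}}{w-\t})\to$ a large multiple of $-i\bm{n}$ (with the $\bm{\zz},\bm{z}$ correction vanishing). Using that $E_{Q,M}(\bm{x})\to\sgn(B(M,\bm{x}))=\prod_j\sgn(B(\bm{m_j},\bm{x}))$ as $\bm{x}\to\infty$ away from the hyperplanes $B(\bm{m_j},\bm{x})=0$ — this is the asymptotic statement implicit in \cite{ABMP}, and the correction terms come precisely from $\bm{n}$ lying on or near some of these hyperplanes, each such locus contributing a lower-rank false theta function with the corresponding sign factors deleted — one gets $F_{Q,\bm{\mu},\bm\ell,M}(\t)$ plus such corrections, after checking $\sgn(B(M,-i\bm{n}))=\sgn(B(M,\bm{n}))$ up to the phase $e^{\pi i B(\bm{n},\bm\ell)}$ matching $e^{2\pi i B(\bm{n},\bm\ell/2)}$. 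Finally, the claim that (when the columns of $M$ lie in $\LL$) the resulting limit is a weight $\tfrac{n}{2}$, depth $r$ false modular form with completion $\wh{\Psi}$ follows by verifying the structural conditions in the definition of $\wh{\FF}^d_k$ from the Introduction: the transformation law \eqref{2-modular} is parts (1)–(2) specialized to $\bm{z}=\bm{\zz}=\bm{0}$ (with $\ell\equiv r\pmod 2$ and $\chi_{\t,w}(\g)=\chi_{\t,w}^r$), and the derivative condition \eqref{eq:false_completion_derivative} with $\wh{g}_j\in\wh{\FF}^{r-1}_{k+r_j}$, $h_j\in\MM_{2+r_j}$ must be checked by differentiating \eqref{eq:hat_psi_definition} in $w$; since $\tfrac{\del}{\del w}E_{Q,M}$ reduces (again by \cite{ABMP}) to a sum of generalized error functions of rank $r-1$ times a Gaussian, $\tfrac{\del}{\del w}\wh{\Psi}$ splits as a sum of rank-$(r-1)$ completions $\wh{\Psi}$ times unary theta functions, and an inductive argument on $r$ closes the loop (the base case $r=0$ being ordinary theta functions, $r=1$ being \cite{BN}).

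\medskip

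I expect the main obstacle to be the $S$-transformation in part (2): controlling the branch of $\sqrt{i(w-\t)}$ under $(\t,w)\mapsto(-1/\t,-1/w)$ so that it produces exactly $\chi_{\t,w}^r$, and simultaneously keeping every Gaussian-integral normalization, phase, and lattice-duality factor straight through the Poisson summation. The elliptic transformations and the $\t\mapsto\t+1$ case are bookkeeping; the limit in part (3) requires care only near the hyperplanes $B(\bm{m_j},\bm{n})=0$, where the "corrections" live, but this too is manageable. The self-duality and derivative properties of $E_{Q,M}$ from \cite{ABMP} are the engine that makes all of this work.
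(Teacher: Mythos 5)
Your proposal is correct and follows essentially the same route as the paper: direct reindexing for the elliptic shifts, Poisson summation exploiting the Fourier self-duality of the Gaussian-weighted error function for the $S$-transformation, and the asymptotics of $E_{Q,M}$ together with its derivative recursion for part (3). The only organizational difference is that the paper first passes to the rescaled function $\wt{\Psi}=(i(w-\t))^{\frac r2}\wh{\Psi}$, which is single-valued and holomorphic (since only integer powers of $w-\t$ survive in the iterated-integral expansion), and recovers $\chi_{\t,w}^r$ upon dividing back, rather than tracking the branch of the square root through the transformation as you suggest.
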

A more precise version of part (3) is obtained using Lemma \ref{la:generalized_error_asymptotic_form} and Lemma \ref{lem:EQMComplexRecursion} (also see the proof given in Section \ref{sec:higher_depth_false_theta} for more details). With this result at hand, we then show how one can obtain further examples of higher depth false modular forms using differential operators such as
\begin{equation}\label{eq:modular_covariant_diff_op}
\bm{\CD} := \bm{\del_z} + \frac{2 \pi i}{\tau-w} A ( \bm{z}-\bm{\zz} ),
\end{equation}
which are covariant under modular and elliptic transformations (see Proposition \ref{prop:ModularlyCovariant}). Such operators are also important in giving a representation theoretic handle to the space of false modular forms. This opens up the possibility to explore how general this structure is beyond the examples constructed from false theta functions in this paper and the trivial example of $E_2$. For further comments along this directions, see the discussion and the examples at the end of Section \ref{sec:higher_depth_false_theta}.

\begin{rem}
Thanks to Theorem \ref{co:modularity} and Proposition \ref{prop:ModularlyCovariant}, the higher rank false theta functions studied in this paper automatically lead to higher depth quantum modular forms as defined and studied in \cite{BKM1} and \cite{BMM} if the finiteness of the rational limits is ensured (depending on the polynomial insertion, lattice shift, etc.).
\end{rem}

The paper is organized as follows. In Section \ref{sec:generalized_error}, we study various properties of generalized error functions following \cite{ABMP,Na}. One particular result we prove here is Lemma \ref{sign.lemma}, which generalizes the results of \cite{BKMN} used to write rank two false theta functions in terms of iterated Eichler-type integrals of modular forms to arbitrary ranks. In Section \ref{sec:higher_depth_false_theta}, we show our main result, Theorem \ref{co:modularity}, and study covariant differential operators such as the one in equation \eqref{eq:modular_covariant_diff_op}. In Section \ref{sec:w_algebra_characters}, we apply the formalism developed here to a number of false modular forms up to depth three appearing in characters of $W^0(p)_L$-modules. These $q$-series were first investigated in \cite{BM2,CM2} and later in \cite{BKM1}, where three of the authors proved that for $L=A_2$  their asymptotic behavior at rationals gives rise to a family of depth two quantum modular forms of weight one and two. The results in this paper are stated quite differently compared to \cite{BKM1}, but Lemma \ref{sign.lemma} can in fact be employed to reprove the depth two quantum modularity of $W^0(p)_{A_2}$-characters. Finally, in Section \ref{sec:Zhat_invariant} we show how our results can be used to study $\hat{Z}$-invariants of plumbed $3$-manifolds \'a la Gukov--Pei--Putrov--Vafa \cite{GPPV}.

\addtocontents{toc}{\SkipTocEntry}
\section*{Acknowledgements}
We thank the referee for the useful comments and helpful suggestions.

\section{Properties of Generalized Error Functions}\label{sec:generalized_error}
We start by laying out the notations and conventions we use in this paper and setting up properties of the generalized error functions defined in \eqref{eq:generalized_error_definition}.
Note that the $\bm{x}$-dependence of $E_{Q,M}( \bm{x})$ is only through the orthogonal projection of $\bm{x}$ to the subspace spanned by the columns of $M$ and we have $E_{Q,\emptyset}(\bm{x})=1$.
In the following, we employ the notations $[j,r] := \{ j,j+1, \ldots, r \}$ and $[r]:=[1,r]$, and for any matrix $C=(\bm{c_1},\dots, \bm{c_r})$, we denote by $C_S:=(\bm{c_{k_1}},\bm{c_{k_2}},\dots, \bm{c_{k_{\lvert S\rvert}}})$ the submatrix consisting of the columns indexed by $S=\{k_1,k_2,\dots, k_{\lvert S\rvert} \}\subseteq[r]$ (with $k_1<k_2<\dots< k_{\lvert S\rvert}$).

The first property we give is the limiting behavior of $E_{Q,M}$, which follows directly from the definition of $E_{Q,M}$ given in \eqref{eq:generalized_error_definition}. This is used below to determine the limiting behavior of our modular completions. Throughout we let $ \d_\mathcal{S}:=1$ if a statement $\mathcal{S}$ holds and $\d_\mathcal{S}:=0$ otherwise.

\begin{lemma}\label{la:generalized_error_asymptotic_form}
For $\bm{x} \in \IR^n$, we have 
\begin{equation*}
\lim_{t \to \infty} E_{Q,M}(t \bm{x}) = \sum_{S \subseteq [r]}
E_{Q, M_{S} } ( \bm{0})
\d_{\scriptscriptstyle B(M_{S}, \bm{x})=\bm{0}} 
\sgn (B(M_{[r] \setminus S},  \bm{x} ) ) ,
\end{equation*}
In particular, if all components of $B( M, \bm{x} )$ are non-zero, then 
\begin{equation*}
\lim_{t \to \infty} E_{Q,M}(t \bm{x}) = \sgn (B( M, \bm{x} )).
\end{equation*}
\end{lemma}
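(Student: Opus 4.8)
The statement concerns the limit $\lim_{t\to\infty} E_{Q,M}(t\bm{x})$, so the natural starting point is the integral representation
\begin{equation*}
E_{Q,M}(t\bm{x}) = \det(A)^{\frac12} \int_{\IR^n} \sgn(B(M,\bm{y})) e^{-2\pi Q(\bm{y} - t\bm{x})} \, d^n\bm{y}.
\end{equation*}
The plan is to substitute $\bm{y} \mapsto \bm{y} + t\bm{x}$ so that the Gaussian recenters at the origin: the integrand becomes $\sgn(B(M, \bm{y} + t\bm{x})) e^{-2\pi Q(\bm{y})}$, and I want to understand the pointwise limit of $\sgn(B(M, \bm{y} + t\bm{x}))$ as $t\to\infty$ for fixed $\bm{y}$. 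Writing out the product, $\sgn(B(M, \bm{y}+t\bm{x})) = \prod_{j=1}^r \sgn(B(\bm{m_j}, \bm{y}) + t B(\bm{m_j}, \bm{x}))$. For each index $j$: if $B(\bm{m_j},\bm{x}) \neq 0$, then for $t$ large the sign is just $\sgn(B(\bm{m_j},\bm{x}))$, independent of $\bm{y}$; if $B(\bm{m_j},\bm{x}) = 0$, the $j$-th factor stays equal to $\sgn(B(\bm{m_j},\bm{y}))$ for all $t$. Letting $S := \{ j \in [r] : B(\bm{m_j},\bm{x}) = 0 \}$, we therefore get the pointwise limit $\sgn(B(M_S, \bm{y})) \cdot \sgn(B(M_{[r]\setminus S}, \bm{x}))$ for almost every $\bm{y}$ (the exceptional set where some $B(\bm{m_j},\bm{y})=0$ has measure zero).

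Next I would justify interchanging limit and integral. Since $|\sgn(\cdot)| \le 1$, the integrand is dominated by $e^{-2\pi Q(\bm{y})}$, which is integrable because $Q$ is positive definite; so dominated convergence applies and
\begin{equation*}
\lim_{t\to\infty} E_{Q,M}(t\bm{x}) = \sgn(B(M_{[r]\setminus S}, \bm{x})) \cdot \det(A)^{\frac12} \int_{\IR^n} \sgn(B(M_S, \bm{y})) e^{-2\pi Q(\bm{y})}\, d^n\bm{y} = \sgn(B(M_{[r]\setminus S}, \bm{x})) \, E_{Q,M_S}(\bm{0}),
\end{equation*}
recognizing the remaining integral as $E_{Q,M_S}(\bm{0})$ from the definition \eqref{eq:generalized_error_definition} (with $\bm{x} = \bm{0}$). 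Finally I would reconcile this with the stated sum over all $S \subseteq [r]$: in the sum $\sum_{S\subseteq[r]} E_{Q,M_S}(\bm{0}) \, \delta_{B(M_S,\bm{x})=\bm{0}}\, \sgn(B(M_{[r]\setminus S},\bm{x}))$, the Kronecker delta $\delta_{B(M_S,\bm{x})=\bm{0}}$ forces $S$ to be a subset of the index set where $B(\bm{m_j},\bm{x})$ vanishes; but if $S$ is a proper subset of that set, then $[r]\setminus S$ contains an index $j$ with $B(\bm{m_j},\bm{x})=0$, making $\sgn(B(M_{[r]\setminus S},\bm{x})) = 0$. Hence only the maximal choice $S = \{ j : B(\bm{m_j},\bm{x}) = 0 \}$ contributes, matching the computed limit. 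The ``in particular'' case is immediate: if all components of $B(M,\bm{x})$ are nonzero then $S = \emptyset$, $E_{Q,\emptyset}(\bm{0}) = 1$, and the limit is $\sgn(B(M,\bm{x}))$.

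The argument is essentially routine; the only point requiring a little care is the almost-everywhere pointwise convergence together with the domination step — one must note that for each fixed $\bm{y}$ outside the measure-zero set $\bigcup_{j: B(\bm{m_j},\bm{x})=0}\{\bm{y} : B(\bm{m_j},\bm{y})=0\}$, there is a threshold $t_0(\bm{y})$ beyond which the non-vanishing factors have stabilized, so the limit genuinely exists pointwise a.e. Everything else is bookkeeping about which subsets $S$ survive the Kronecker delta. I do not anticipate any serious obstacle.
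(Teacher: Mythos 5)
Your proof is correct and is precisely the argument the paper has in mind: the paper offers no written proof, stating only that the lemma ``follows directly from the definition'' in \eqref{eq:generalized_error_definition}, and your recentering of the Gaussian, pointwise stabilization of the sign factors, dominated convergence, and the observation that only the maximal admissible $S$ survives the Kronecker delta is exactly the routine verification being left to the reader.
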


\begin{rem}\label{rem:E2_zero_value}
Note that $E_{Q,M}( \bm{0})  = 0$ for odd $r$ since the integrand in \eqref{eq:generalized_error_definition} is odd in $\bm{y}$. Moreover, passing to an orthonormal basis, $E_{Q,M}( \bm{0})$ can be recognized as the average of $\sgn (B(M, \bm{x} ))$ over the unit sphere. For instance, if $r=2$ with $\wh{\bm{m}}_{\bm{j}}:=\frac{\bm{m_j} }{\sqrt{2 Q(\bm{m_j})}}$, we more explicitly have
\begin{align*}
E_{Q,M} (\bm{0})
=\frac{2}{\pi} \arcsin \lp B(\widehat{\bm{m}}_{\bm{1}}, \wh{\bm{m}}_{\bm{2}}) \rp.
\end{align*}
 \end{rem}

We also need derivatives of the generalized error functions. We first introduce some notation for this purpose. Let $\bm{w_1}, \ldots, \bm{w_r}$ be the basis dual to $\bm{m_1}, \ldots, \bm{m_r}$ for the subspace spanned by them, which in particular means that $B(\bm{m_j}, \bm{w_k}) = \d_{j,k}$, where $\d_{j,k}:=1$ if $j=k$ and 0 otherwise. We write $\bm{m_{k\perp S}}$ for the orthogonal projection of $\bm{m_k}$ to the orthogonal complement of the span of $M_S$ for some $S\subset[r]$. Finally, we let $\bm{\partial_x}:=(\frac{\partial}{\partial x_1},\frac{\partial}{\partial x_2},\dots, \frac{\partial}{\partial x_n})^T$ and $M_{\{k_1,\dots, k_\ell\}\perp S}:=(\bm{m_{k_1\perp S}},\dots, \bm{m_{k_\ell\perp S}})$.
Then the required derivatives were computed in Lemma 3.5 of \cite{Na}. We restate this result here and give an elementary proof for the reader's convenience.
\begin{lem}\label{lem:generalized_error_derivative}
We have ($\bm{z} \in \IC^n$)
\begin{equation*}
\bm{w_k}^T \bm{\partial_z} \lp E_{Q,M} (\bm{z})  \rp  = 
\frac{2}{\sqrt{2Q(\bm{m_k})} }
e^{-  \frac{\pi B\left(\bm{m_k}, \bm{z}\right)^2}{2Q\left(\bm{m_k}\right) } }
E_{  Q, M_{[r] \setminus \{k\}\perp \{k\}}} \left(  \bm{z} \right) .
\end{equation*}
\end{lem}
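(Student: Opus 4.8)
The plan is to compute the derivative directly from the integral representation \eqref{eq:generalized_error_definition} and reduce the resulting expression back to a generalized error function of one lower rank. First I would write
\[
E_{Q,M}(\bm{z}) = \det(A)^{\frac12} \int_{\IR^n} \sgn(B(M,\bm{y})) e^{-2\pi Q(\bm{y}-\bm{z})}\, d^n\bm{y},
\]
and apply the directional derivative $\bm{w_k}^T \bm{\partial_z}$. Since $\bm{w_k}^T\bm{\partial_z}\, e^{-2\pi Q(\bm{y}-\bm{z})} = 2\pi B(\bm{w_k}, \bm{y}-\bm{z}) e^{-2\pi Q(\bm{y}-\bm{z})} = -\bm{w_k}^T \bm{\partial_y}\, e^{-2\pi Q(\bm{y}-\bm{z})}$, I can move the derivative onto the Gaussian and integrate by parts in $\bm{y}$. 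The boundary terms at infinity vanish, and differentiating $\sgn(B(M,\bm{y}))$ produces a sum of delta functions $\sum_{j=1}^r \delta(B(\bm{m_j},\bm{y})) B(\bm{m_j},\bm{w_k}) \prod_{i\neq j}\sgn(B(\bm{m_i},\bm{y}))$; by duality $B(\bm{m_j},\bm{w_k})=\delta_{j,k}$, so only the $j=k$ term survives.

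Next I would evaluate the delta-function constraint. Integrating against $\delta(B(\bm{m_k},\bm{y}))$ restricts $\bm{y}$ to the hyperplane orthogonal (with respect to $B$) to $\bm{m_k}$. I would split $\bm{y} = s\,\bm{m_k}/\sqrt{2Q(\bm{m_k})} + \bm{y'}$ with $\bm{y'}$ in that hyperplane, so that $B(\bm{m_k},\bm{y}) = s\sqrt{2Q(\bm{m_k})}$ and the delta function contributes a Jacobian factor $1/\sqrt{2Q(\bm{m_k})}$; the Gaussian factorizes as $e^{-\pi s^2}\cdot(\text{Gaussian in }\bm{y'})$ after completing the square, with the $s$-part evaluated at $s = B(\bm{m_k},\bm{z})/\sqrt{2Q(\bm{m_k})}$, producing the prefactor $e^{-\pi B(\bm{m_k},\bm{z})^2/(2Q(\bm{m_k}))}$. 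On the hyperplane, $\sgn(B(\bm{m_i},\bm{y}))$ for $i\neq k$ only depends on the component of $\bm{m_i}$ orthogonal to $\bm{m_k}$, i.e. $\bm{m_{i\perp\{k\}}}$, and the remaining $(n-1)$-dimensional Gaussian integral is, up to the normalization $\det$ factor, exactly the defining integral for $E_{Q,\,M_{[r]\setminus\{k\}\perp\{k\}}}$ evaluated at the projection of $\bm{z}$ onto the hyperplane — which by Remark's observation that $E_{Q,M}$ only depends on $\bm{x}$ through its projection onto $\mathrm{span}(M)$ equals $E_{Q,\,M_{[r]\setminus\{k\}\perp\{k\}}}(\bm{z})$. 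Collecting the factor $2$ from $2\pi$ vs. the $\pi$ normalization in $Q$, this yields the claimed formula.

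The main obstacle I expect is handling the delta-function manipulation and the change of variables cleanly — in particular tracking all normalization constants ($\det(A)^{1/2}$ versus $\det$ of the restricted form, the factor $\sqrt{2Q(\bm{m_k})}$ from the Jacobian, and the $2\pi$ vs $\pi$ in the exponents) and justifying that the integration by parts has no boundary contributions. A cleaner alternative that avoids distributions would be to differentiate under the integral sign after a preliminary change of variables that makes the $\bm{m_k}$-direction explicit: diagonalize so that $\bm{y}$ splits into a coordinate along $\bm{m_k}$ and coordinates orthogonal to it, carry out the one-dimensional integral in the $\bm{m_k}$-direction explicitly (it is an elementary error-function integral whose $\bm{z}$-derivative in the $\bm{w_k}$ direction is a pure Gaussian), and recognize the leftover $(n-1)$-dimensional integral as $E_{Q,\,M_{[r]\setminus\{k\}\perp\{k\}}}$. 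Either route is routine once the bookkeeping is set up, and since the statement is quoted from Lemma 3.5 of \cite{Na}, correctness is not in doubt; the task is purely to present the elementary computation.
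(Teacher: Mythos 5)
Your proposal is correct and follows essentially the same route as the paper: the paper also moves the derivative onto the Gaussian via $\bm{w_k}^T\bm{\partial_z}\,e^{-2\pi Q(\bm{y}-\bm{z})} = -\bm{w_k}^T\bm{\partial_y}\,e^{-2\pi Q(\bm{y}-\bm{z})}$ and integrates by parts in the $\bm{w_k}$-direction so that only the $k$-th sign factor contributes (yielding the factor $2$ and the Gaussian prefactor), working in explicit Euclidean coordinates adapted to $\bm{m_k}$ rather than with delta functions. The only presentational difference is that the paper verifies the identity by bringing both sides to a common integral via an explicit change of variables, whereas you reduce the left-hand side directly using the observation that $E_{Q,M}$ depends only on the projection of its argument onto $\mathrm{span}(M)$; both are sound.
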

\begin{proof}
Without loss of generality we can restrict to $\bm{z} \in \IR^n$ and the result generalizes to complex values with analytic continuation. First we make a linear coordinate transformation to switch to Euclidean coordinates for which 
$Q(\bm{z}) = \frac{1}{2} z_0^2 + \frac{1}{2} \bm{\zzb}^T \bm{\zzb}$ and where we write any vector $\bm{z} \in \IR^n$ as $\bm{z} = (z_0, \bm{\zzb} )$ for $z_0 \in \IR$ and $\bm{\zzb} \in \IR^{n-1}$. Moreover we can pick these Euclidean coordinates so that $\mathbbm{w}_k = \bm{0}$ and further rescale $\bm{m}_k$ to pick
\begin{equation*}
\bm{w_k} = (1, \bm{0}), 
\quad 
\bm{m_k} = (1, \bm{\mmb_k}), 
\quad \mbox{and} \ \ 
\bm{m_j} = (0,  \bm{\mmb_j}) \mbox{ for } j \neq k .
\end{equation*}
The left-hand side of the lemma is then equal to, with $\bm{\y}^2 := \bm{\y}^T \bm{\y}$,
\begin{equation*}
\int_{\IR^{n-1}}  \prod_{\substack{j=1 \\ j \neq k}}^r \sgn \left( \bm{\mmb_j}^T \bm{\y} \right)
e^{-\pi \left(\bm{\y}-\bm{\zzb}\right)^2} 
\int_\IR \sgn \left(y_0 + \bm{\mmb_k}^T \bm{\y} \right)
 \del_{z_0} e^{-\pi \left(y_0-z_0\right)^2}
d y_0 d^{n-1} \bm{\y}.
\end{equation*}
Writing $\del_{z_0} e^{-\pi (y_0-z_0)^2} = - \del_{y_0} e^{-\pi (y_0-z_0)^2}$ and integrating over $y_0$  the left-hand side of the lemma equals
\begin{equation}\label{eq:lem_derivative_LHS}
2 \int_{\IR^{n-1}}  \prod_{\substack{j=1 \\ j \neq k}}^r \sgn \left( \bm{\mmb_j}^T \bm{\y} \right)
e^{-\pi \left( \bm{\y}-\bm{\zzb} \right)^2 -\pi \left(z_0+ \bm{\mmb_k}^T \bm{\y}\right)^2} 
 d^{n-1} \bm{\y}.
\end{equation}

Next we focus on the right-hand side of the lemma. We start by noting that for $j \neq k$ we have
\begin{equation*}
\bm{m_{j \perp k}} = \bm{m_j} - \frac{B\left(\bm{m_j}, \bm{m_k} \right)}{B\left(\bm{m_k}, \bm{m_k} \right)} \bm{m_k}
= \frac{1}{1+\bm{\mmb_k}^2} \lp - \bm{\mmb_k}^T \bm{\mmb_j}, \left(1+\bm{\mmb_k}^2\right) \bm{\mmb_j} - \left( \bm{\mmb_k}^T \bm{\mmb_j} \right) \bm{\mmb_k} \rp .
\end{equation*}
Then we write $E_{  Q, M_{[r] \setminus \{k\}\perp \{k\}}} \left(  \bm{z} \right) $ in these coordinates as
\begin{align*}
\int_{\IR^n} \prod_{\substack{j=1 \\ j \neq k}}^r
\sgn \lp - \left(\bm{\mmb_k}^T \bm{\mmb_j}\right) y_0 + \left(1+\bm{\mmb_k}^2\right) \left(\bm{\mmb_j}^T \bm{\y} \right) - \left( \bm{\mmb_k}^T \bm{\mmb_j} \right) \left(\bm{\mmb_k} ^T \bm{\y}\right)   \rp 
e^{-\pi \left(y_0-z_0\right)^2 -\pi \left(\bm{\y}-\bm{\zzb}\right)^2} d y_0 d^{n-1} \bm{\y} .
\end{align*}
Making the change of variables $(y_0, \bm{\y}) \mapsto (y_0 - \bm{\mmb_k}^T \bm{\y}, \bm{\y} + y_0 \bm{\mmb_k}) $, the integral turns into
\begin{align*}
\left(1+\bm{\mmb_k}^2\right)
\int_{\IR^n} \prod_{\substack{j=1 \\ j \neq k}}^r
\sgn \lp \bm{\mmb_j}^T \bm{\y}   \rp 
e^{-\pi \left(y_0 - \bm{\mmb_k}^T \bm{\y}-z_0\right)^2 -\pi \left(\bm{\y}+ y_0 \bm{\mmb_k}-\bm{\zzb}\right)^2} d y_0 d^{n-1} \bm{\y} .
\end{align*}
Together with the prefactor $\frac{2}{\sqrt{2Q(\bm{m_k})} }
e^{-  \frac{\pi B(\bm{m_k}, \bm{z})^2}{2Q(\bm{m_k}) } }$, the right-hand side then overall becomes
\begin{equation*}
2 \sqrt{1+\bm{\mmb_k}^2}  
\int_{\IR^n} \prod_{\substack{j=1 \\ j \neq k}}^r
\sgn \lp \bm{\mmb_j}^T \bm{\y}   \rp 
e^{-\pi \left(y_0 - \bm{\mmb_k}^T \bm{\y}-z_0\right)^2 -\pi \left(\bm{\y}+ y_0 \bm{\mmb_k}-\bm{\zzb}\right)^2
-\frac{\pi}{1+\bm{\mmb_k}^2} \left(z_0 + \bm{\mmb_k}^T \bm{\zzb} \right)^2 }
 d y_0 d^{n-1} \bm{\y}.
\end{equation*}
Noting that 
\begin{align*}
&\left(y_0 - \bm{\mmb_k}^T \bm{\y}-z_0\right)^2 + \left(\bm{\y}+ y_0 \bm{\mmb_k}-\bm{\zzb} \right)^2+ \frac{\left(z_0 + \bm{\mmb_k}^T \bm{\zzb} \right)^2}{1+\bm{\mmb_k}^2} 
\\ & \qquad \qquad
=
\left(\bm{\y}-\bm{\zzb} \right)^2 + \left( z_0+\bm{\mmb_k}^T \bm{\y} \right)^2 + \left(1+\bm{\mmb_k}^2\right) \lp y_0 - \frac{z_0 + \bm{\mmb_k}^T \bm{\zzb} }{1+\bm{\mmb_k}^2} \rp^2
\end{align*}
and taking the Gaussian integral over $y_0$ yields an expression equal to equation \eqref{eq:lem_derivative_LHS}, and hence proving the lemma statement.
\end{proof}

\begin{rem}
As in Proposition 3.13 of \cite{Na}, Lemma \ref{lem:generalized_error_derivative} implies
\begin{equation*}
\bm{x}^T \bm{\partial_x} \lp E_{Q,M} (\bm{x}) \rp  = 
2 \sum_{k=1}^r \frac{B(\bm{m_k}, \bm{x})}{\sqrt{2Q(\bm{m_k})} }
e^{-  \frac{\pi B\left(\bm{m_k}, \bm{x}\right)^2}{2Q\left(\bm{m_k}\right) } }
E_{  Q,M_{[r] \setminus \{k\}\perp \{k\}}} \left(  \bm{x} \right)
\end{equation*}
and we use this expression to get the following recursive relation
\begin{equation}\label{eq:generalized_error_recursion}
E_{Q,M}( t \bm{x}) 
= E_{Q,M}( \bm{0})  +
2 \sum_{k=1}^r \int_0^t 
\frac{B(\bm{m_k}, \bm{x})}{\sqrt{2Q(\bm{m_k})} }
e^{-  \frac{\pi B\left(\bm{m_k}, \bm{x}\right)^2}{2Q\left(\bm{m_k}\right) }t_1^2 }
E_{  Q,M_{[r] \setminus \{k\}\perp \{k\}}}  \left(   t_1  \bm{x} \right) d t_1 .
\end{equation}
\end{rem}
A particularly convenient rewriting of this result that is useful in this paper is as follows.
\begin{lemma}\label{lem:EQMComplexRecursion}
For $\t,w \in \mathbb{H}$ with $\t \neq w$ and $\bm{x} \in \IC^n$, we have
\begin{multline*}
E_{Q,M} \left(-i\sqrt{i(w-\t)}\bm{x}\right)
\\\qquad 
= E_{Q,M}( \bm{0})+	
\sum_{k=1}^r \int_{\t}^{w} 
B(\widehat{\bm{m}}_{\bm{k}}, \bm{x})
\frac{e^{ \pi i B(\widehat{\bm{m}}_{\bm{k}}, \bm{x})^2 (w_1-\t) }}{\sqrt{i(w_1-\t)}}
E_{  Q,M_{[r] \setminus \{k\}\perp \{k\}}}  \left( -i  \sqrt{i(w_1-\t)}  \bm{x} \right) d w_1 .
\end{multline*}
\end{lemma}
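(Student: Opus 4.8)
The plan is to derive Lemma \ref{lem:EQMComplexRecursion} directly from the recursion \eqref{eq:generalized_error_recursion} by a change of the integration variable, together with analytic continuation in $\tau$ and $w$. First I would set $\bm{x} \in \IR^n$ (the general complex case then follows by analytic continuation, exactly as in the proof of Lemma \ref{lem:generalized_error_derivative}, since both sides are holomorphic in $\bm{x}$) and, for the moment, take $\tau, w$ on the positive imaginary axis so that $i(w-\tau)$ is real and positive; the general statement for $\tau, w \in \IH$ with $\tau \neq w$ will again follow by analytic continuation in the pair $(\tau, w)$ once both sides are recognized as holomorphic functions on $\{(\tau,w) \in \IH \times \IH : \tau \neq w\}$ (using the principal branch of the square-root, as stipulated before \eqref{eq:generalized_error_definition}).

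Next I would start from \eqref{eq:generalized_error_recursion} with $t = \sqrt{i(w-\tau)}$ and with $\bm{x}$ replaced by $-i\bm{x}$, so that the left-hand side becomes $E_{Q,M}(-i\sqrt{i(w-\tau)}\bm{x})$ and the integrand on the right involves $E_{Q,M_{[r]\setminus\{k\}\perp\{k\}}}(-i t_1 \bm{x})$. In the $k$-th summand I would substitute $t_1 = \sqrt{i(w_1 - \tau)}$, i.e. $w_1 = \tau + \frac{t_1^2}{i} = \tau - i t_1^2$, which sends $t_1 \in [0, \sqrt{i(w-\tau)}]$ to $w_1$ running from $\tau$ to $w$ along the appropriate contour, with $dw_1 = -2i\, t_1\, dt_1$, equivalently $dt_1 = \frac{i\, dw_1}{2 t_1} = \frac{i\, dw_1}{2\sqrt{i(w_1-\tau)}}$. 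Plugging in $B(\bm{m_k}, -i\bm{x}) = -i B(\bm{m_k},\bm{x})$ and using $\widehat{\bm{m}}_{\bm{k}} = \bm{m_k}/\sqrt{2Q(\bm{m_k})}$, the prefactor $\frac{2 B(\bm{m_k}, -i\bm{x})}{\sqrt{2Q(\bm{m_k})}}$ times $dt_1$ becomes $\frac{2(-i)B(\widehat{\bm{m}}_{\bm{k}},\bm{x})}{1} \cdot \frac{i\, dw_1}{2\sqrt{i(w_1-\tau)}} = \frac{B(\widehat{\bm{m}}_{\bm{k}},\bm{x})}{\sqrt{i(w_1-\tau)}}\, dw_1$, and the Gaussian factor $e^{-\frac{\pi B(\bm{m_k}, -i\bm{x})^2}{2Q(\bm{m_k})} t_1^2} = e^{\pi B(\widehat{\bm{m}}_{\bm{k}},\bm{x})^2 \cdot i(w_1-\tau)} = e^{\pi i B(\widehat{\bm{m}}_{\bm{k}},\bm{x})^2 (w_1-\tau)}$; combining these reproduces exactly the claimed integrand. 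Finally I would note that $E_{Q,M}(\bm{0})$ is unchanged, matching the constant term in the lemma.

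The main technical point — and the step I expect to require the most care — is keeping the branch of the square-root consistent under the substitution $t_1 \mapsto w_1$: one must check that as $w_1$ traverses the contour from $\tau$ to $w$ (which, for $\tau,w$ on the imaginary axis, is the segment of the imaginary axis between them), the principal value of $\sqrt{i(w_1-\tau)}$ coincides with the value $t_1 \geq 0$ of the original real integration variable, so that no spurious sign is introduced and $dt_1 = \tfrac{i}{2}(i(w_1-\tau))^{-1/2}\,dw_1$ holds with the principal branch. Once this is verified on the imaginary axis, analytic continuation in $(\tau,w)$ extends the identity to all $\tau,w \in \IH$ with $\tau \neq w$, the contour deforming accordingly and remaining inside the domain of holomorphy, which completes the proof.
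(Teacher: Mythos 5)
Your proposal is correct and follows the same route the paper intends: Lemma \ref{lem:EQMComplexRecursion} is presented there as a direct rewriting of \eqref{eq:generalized_error_recursion}, and your substitution $\bm{x}\mapsto -i\bm{x}$, $t=\sqrt{i(w-\t)}$, $t_1=\sqrt{i(w_1-\t)}$ reproduces the stated integrand exactly, with the branch-consistency and analytic-continuation points correctly identified as the only delicate steps.
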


\begin{rem}
Since we define the square-root via its principal value, the function $\sqrt{i (w_1- \t)}$ is implemented with a branch-cut in the $w_1$-plane that is a vertical ray from $w_1 = \t$ to $w_1 = i \infty$. Therefore, in the statement of Lemma \ref{lem:EQMComplexRecursion}, deforming the integration path from $\t$ to $w$ without crossing the branch-cut does not change the value of the corresponding integral thanks to the holomorphicity of the integrand. Whenever we need to be more concrete, we assume that the integral is taken along the hyperbolic geodesic from $\t$ to $w$ following \cite{BKMN}. This is a particularly convenient choice for studying the modular transformations of such objects. We make this choice also when we encounter analogous (iterated) integrals below.
\end{rem}

The main upshot of the recursive relation in equation \eqref{eq:generalized_error_recursion} is that it allows us to express the generalized error functions as iterated integrals of Gaussian functions. To state this result, we introduce some notation. 
For a permutation $\s \in S_r$, we define the basis $(\bm{v_{1,\s}},\dots, \bm{v_{r,\s}})$ obtained by applying the orthonormal Gram--Schmidt process (with respect to $B$) to $(\bm{m_{\s(1)}},\dots, \bm{m_{\s(r)}})$. 
By \eqref{eq:generalized_error_recursion}, using induction on $r$, the number of vectors in $M$, then gives the following result.\footnote{
In the following lemma, instead of writing $\s \left([a,b]\right)$ for permutations $\s\in S_r, a,b\in[r]$, we simply write $\s[a,b]$ for improved readability.
}

\begin{lemma}\label{la:ErrorIntegral}
For $\bm{x} \in \IC^n$, we have\footnote{We leave the linear factors $ B(\bm{v_{j,\s}}, \bm{x}  )$ inside the integrals to emphasize that summing over $\bm{x}$ would generate unary theta functions of weight $\frac{3}{2}$ as integrands.} 
\begin{align*}
&E_{Q,M}( t \bm{x}) = 
\sum_{\s \in S_r} \sum_{\ell=0}^r \frac{2^\ell}{(r-\ell)!}
E_{Q,M_{\s[\ell+1,r]\perp \s[1,\ell]}} ( \bm{0})  \int_0^t    B(\bm{v_{1,\s}}, \bm{x} )
e^{- \pi B\left( \bm{v_{1,\s}}, \bm{x}\right)^2 t_1^2 }
\\ &\hspace{2cm}\quad
\times
\int_0^{t_1}  B(\bm{v_{2,\s}}, \bm{x}  )
e^{- \pi B\left( \bm{v_{2,\s}}, \bm{x}\right)^2 t_2^2 } \ldots
\int_0^{t_{\ell-1}} B(\bm{v_{\ell,\s}}, \bm{x} )
e^{- \pi B( \bm{v_{\ell,\s}}, \bm{x})^2 t_\ell^2 } d t_\ell\dots d t_2  d t_1.
\end{align*}
\end{lemma}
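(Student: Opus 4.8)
The plan is to prove Lemma~\ref{la:ErrorIntegral} by induction on $r$, the number of columns of $M$, using the recursive relation \eqref{eq:generalized_error_recursion} as the engine of the induction. The base case $r=0$ is immediate: $E_{Q,\emptyset}(t\bm{x})=1$, and the right-hand side collapses to the single term $\s\in S_0$, $\ell=0$, with empty product of integrals and $E_{Q,\emptyset}(\bm{0})=1$. For the inductive step, I would apply \eqref{eq:generalized_error_recursion}, which expresses $E_{Q,M}(t\bm{x})$ as $E_{Q,M}(\bm{0})$ plus a sum over $k\in[r]$ of a single integral in $t_1$ whose integrand involves $E_{Q,M_{[r]\setminus\{k\}\perp\{k\}}}(t_1\bm{x})$, a generalized error function attached to only $r-1$ vectors. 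Applying the induction hypothesis to that inner error function (with the ambient quadratic form still $Q$, but the vectors now being the projections $\bm{m_{j\perp\{k\}}}$ for $j\neq k$) unfolds it into its own iterated-integral expansion, and substituting this back produces an $(\ell)$-fold iterated integral structure of exactly the claimed shape.

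The combinatorial heart of the argument is matching the permutation-and-subset bookkeeping. After one application of \eqref{eq:generalized_error_recursion} followed by the induction hypothesis, the outermost integration variable $t_1$ is paired with the vector $\bm{v_{1,\s}}=\bm{m_k}/\sqrt{2Q(\bm{m_k})}=\widehat{\bm m}_{\bm k}$ (the first Gram--Schmidt vector when $\s(1)=k$), and the remaining nested integrals come from the Gram--Schmidt process applied to the projected vectors $(\bm{m_{\s(2)\perp\{k\}}},\dots,\bm{m_{\s(r)\perp\{k\}}})$. The key observation to record here is that Gram--Schmidt applied to $(\bm{m_{\s(1)}},\dots,\bm{m_{\s(r)}})$ produces the same orthonormal vectors $\bm{v_{2,\s}},\dots,\bm{v_{r,\s}}$ as Gram--Schmidt applied to $(\bm{m_{\s(2)\perp\{k\}}},\dots,\bm{m_{\s(r)\perp\{k\}}})$ — projecting out $\bm{m_k}$ first is precisely the first step of Gram--Schmidt — so the recursion is consistent with the stated basis $(\bm{v_{1,\s}},\dots,\bm{v_{r,\s}})$. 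One then checks that summing over $k\in[r]$ and then over permutations of $[r]\setminus\{k\}$ reassembles a sum over all $\s\in S_r$, with the term $\ell'$ in the inner expansion (of depth $r-1$) contributing to the term $\ell=\ell'+1$ of the outer expansion; the $E_{Q,M}(\bm{0})$ term from \eqref{eq:generalized_error_recursion} supplies the $\ell=0$ contribution. The constants have to be tracked: each use of \eqref{eq:generalized_error_recursion} contributes a factor $2$ (matching the $2^\ell$), and the $\frac{1}{(r-\ell)!}$ arises because the $r-\ell$ unintegrated vectors sitting inside $E_{Q,M_{\s[\ell+1,r]\perp\s[1,\ell]}}(\bm{0})$ can be permuted among themselves without changing that term, so each such $\bm{0}$-value is counted $(r-\ell)!$ times as $\s$ ranges over $S_r$ — dividing by $(r-\ell)!$ corrects the overcount.

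I would also note that the subscript $M_{\s[\ell+1,r]\perp\s[1,\ell]}$ in the constant $E_{Q,M_{\s[\ell+1,r]\perp\s[1,\ell]}}(\bm 0)$ is consistent across the recursion: after integrating out the first $\ell$ variables (projecting out $\bm{m_{\s(1)}},\dots,\bm{m_{\s(\ell)}}$ successively, which is the same as projecting onto the orthogonal complement of their span all at once), the leftover error function is attached to the projections of $\bm{m_{\s(\ell+1)}},\dots,\bm{m_{\s(r)}}$ onto $\operatorname{span}(M_{\s[1,\ell]})^{\perp}$, which is exactly $M_{\s[\ell+1,r]\perp\s[1,\ell]}$. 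The main obstacle I anticipate is not any single estimate but the clean organization of this permutation/subset combinatorics — in particular verifying the $\frac{2^\ell}{(r-\ell)!}$ weight and the consistency of the iterated Gram--Schmidt vectors under successive projection — so I would isolate the statement ``Gram--Schmidt commutes with prior projection in this sense'' as an explicit sub-claim before running the induction. Everything else (the validity of differentiating under the integral sign to get \eqref{eq:generalized_error_recursion}, analytic continuation from $\bm x\in\IR^n$ to $\bm x\in\IC^n$, convergence of all integrals due to the Gaussian factors) is routine and can be handled exactly as in the cited Lemma 3.5 and Proposition 3.13 of \cite{Na}.
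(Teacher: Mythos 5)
Your proposal is correct and follows exactly the paper's (very briefly indicated) argument: induction on $r$ using the recursion \eqref{eq:generalized_error_recursion}, with the Gram--Schmidt/projection compatibility and the $\frac{2^\ell}{(r-\ell)!}$ bookkeeping worked out as you describe. No gaps.
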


\begin{rem}
Lemma \ref{la:ErrorIntegral} can be conveniently restated as (for $\t,w \in \mathbb{H}$ with $\t \neq w$)
\begin{multline*}
E_{Q,M} \left(-i\sqrt{i(w-\t)}\bm{x}\right) = 
\sum_{\s \in S_r} \sum_{\ell=0}^r \frac{E_{Q,M_{\s[\ell+1,r]\perp \s[1,\ell]}} ( \bm{0}) }{(r-\ell)!}
\int_\t^w
\frac{B(\bm{v_{1,\s}}, \bm{x} ) e^{\pi i B\left( \bm{v_{1,\s}}, \bm{x}\right)^2 (w_1-\t)}}{\sqrt{i(w_1-\t)}}
\\
\times
\int_\t^{w_1}  
\frac{B(\bm{v_{2,\s}}, \bm{x} ) e^{\pi i B\left( \bm{v_{2,\s}}, \bm{x}\right)^2 (w_2-\t)}}{\sqrt{i(w_2-\t)}}
\ldots
\int_\t^{w_{\ell-1}} 
\frac{B(\bm{v_{\ell,\s}}, \bm{x} ) e^{\pi i B\left( \bm{v_{\ell,\s}}, \bm{x}\right)^2 (w_\ell-\t)}}{\sqrt{i(w_\ell-\t)}}
d w_\ell\dots d w_2  d w_1.
\end{multline*}
\end{rem}

\begin{rem}
The iterated integrals in Lemma \ref{la:ErrorIntegral} can be used to give a convergent series for generalized error functions around $\bm{x}=\bm{0}$ as in Proposition 3.7 of \cite{ABMP}, which is in turn useful for its numerical evaluation.
\end{rem}

Finally, we note that combining the results of Lemma \ref{la:generalized_error_asymptotic_form} and Lemma \ref{la:ErrorIntegral} leads to an identity for products of sign functions, which generalizes Lemma 3.1 of \cite{BKMN} that was used there for studying the modular properties of rank two false theta functions.
\begin{lemma} \label{sign.lemma}
For $\bm{x} \in \IR^n$ and if all components of $B( M, \bm{x} )$ are non-zero, then we have
\begin{align*}
&\sgn ( B( M, \bm{x} ) )  =
\sum_{\s \in S_r} \sum_{\ell=0}^r \frac{E_{Q,M_{\s[\ell+1,r]\perp \s[1,\ell]}} ( \bm{0}) }{(r-\ell)!}
\int_\t^{\t+i \infty}
\frac{B(\bm{v_{1,\s}}, \bm{x} ) e^{\pi i B\left( \bm{v_{1,\s}}, \bm{x}\right)^2 (w_1-\t)}}{\sqrt{i(w_1-\t)}}
\\ & \qquad
\times
\int_\t^{w_1}  
\frac{B(\bm{v_{2,\s}}, \bm{x} ) e^{\pi i B\left( \bm{v_{2,\s}}, \bm{x}\right)^2 (w_2-\t)}}{\sqrt{i(w_2-\t)}}
\ldots
\int_\t^{w_{\ell-1}} 
\frac{B(\bm{v_{\ell,\s}}, \bm{x} ) e^{\pi i B\left( \bm{v_{\ell,\s}}, \bm{x}\right)^2 (w_\ell-\t)}}{\sqrt{i(w_\ell-\t)}}
d w_\ell\dots d w_2  d w_1.
\end{align*}
In case some coordinates of $B( M, \bm{x} )$ are zero, the left-hand side more generally becomes
\begin{equation*}
\sum_{S \subseteq [r]}
E_{Q, M_{S} } ( \bm{0}) 
\d_{\scriptscriptstyle B(M_{S}, \bm{x})=\bm{0}}  
\sgn (B(M_{[r] \setminus S},  \bm{x} ) ) .
\end{equation*}
\end{lemma}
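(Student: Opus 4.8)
The plan is to deduce Lemma \ref{sign.lemma} directly by combining the iterated-integral representation of $E_{Q,M}$ from Lemma \ref{la:ErrorIntegral} with the limiting behavior of $E_{Q,M}$ from Lemma \ref{la:generalized_error_asymptotic_form}. The idea is that the left-hand side of the claimed identity is exactly $\lim_{t \to \infty} E_{Q,M}(t \bm{x})$ (by Lemma \ref{la:generalized_error_asymptotic_form}, in both the generic and degenerate cases), while the right-hand side should arise by taking the same limit inside the iterated-integral formula of Lemma \ref{la:ErrorIntegral}. So the whole proof is essentially: write $E_{Q,M}(t\bm{x})$ via Lemma \ref{la:ErrorIntegral}, rewrite each integral $\int_0^{t_{j-1}} B(\bm{v_{j,\s}},\bm{x}) e^{-\pi B(\bm{v_{j,\s}},\bm{x})^2 t_j^2}\,dt_j$ in the $w$-variable normalization used in the subsequent remark (i.e.\ substitute $t_j^2 \leftrightarrow -i(w_j-\t)$ so that the outermost integral runs from $\t$ to $\t + i\infty$ after taking $t \to \infty$), and then justify passing the limit $t \to \infty$ inside.

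Concretely, I would first recall that the outermost variable $t_1$ ranges over $[0,t]$, so taking $t \to \infty$ turns $\int_0^t$ into $\int_0^\infty$; under the change of variables making the exponent $e^{\pi i B(\bm{v_{1,\s}},\bm{x})^2(w_1-\t)}$ (following the Remark after Lemma \ref{la:ErrorIntegral}), this becomes $\int_\t^{\t + i\infty}$ along the vertical ray, which is precisely the integration contour appearing in the statement. The inner integrals then remain as iterated integrals from $\t$ to $w_1$, from $\t$ to $w_2$, etc., exactly as written. The coefficients $\frac{E_{Q, M_{\s[\ell+1,r]\perp\s[1,\ell]}}(\bm{0})}{(r-\ell)!}$ are unaffected by the limit since they do not depend on $t$. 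Thus the right-hand side of Lemma \ref{sign.lemma} is literally $\lim_{t\to\infty}$ of the right-hand side of (the $w$-normalized form of) Lemma \ref{la:ErrorIntegral}, and equating with $\lim_{t\to\infty} E_{Q,M}(t\bm{x})$ evaluated via Lemma \ref{la:generalized_error_asymptotic_form} gives both the generic claim (when all components of $B(M,\bm{x})$ are nonzero, using the ``in particular'' part of Lemma \ref{la:generalized_error_asymptotic_form}) and the degenerate claim (the full sum $\sum_{S\subseteq[r]} E_{Q,M_S}(\bm{0})\,\d_{B(M_S,\bm{x})=\bm{0}}\,\sgn(B(M_{[r]\setminus S},\bm{x}))$).

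The main obstacle is justifying the interchange of the limit $t \to \infty$ with the (iterated) integral. The outermost integrand $B(\bm{v_{1,\s}},\bm{x}) e^{-\pi B(\bm{v_{1,\s}},\bm{x})^2 t_1^2} \cdot (\text{inner iterated integral})$ has Gaussian decay in $t_1$ when $B(\bm{v_{1,\s}},\bm{x}) \neq 0$, while the inner iterated integral (itself a bounded, indeed convergent, nested integral in $t_2,\dots,t_\ell$ since each integrand has Gaussian decay or is simply bounded when the relevant linear form vanishes) is uniformly bounded in $t_1$; this makes dominated convergence applicable in the generic directions. In the degenerate case, when $B(\bm{v_{j,\s}},\bm{x}) = 0$ for some $j$, the corresponding factor $B(\bm{v_{j,\s}},\bm{x})$ in the integrand is zero, so that term of the sum over $\s$ and $\ell$ drops out, which is compatible with the appearance of only the surviving $\sgn(B(M_{[r]\setminus S},\bm{x}))$ contributions — but one must check carefully that the bookkeeping of which $(\s,\ell)$ terms survive matches the sum over subsets $S$ in Lemma \ref{la:generalized_error_asymptotic_form}; this is most cleanly handled by noting that both sides are already proven equal as functions of $t$ before the limit (Lemma \ref{la:ErrorIntegral} is an identity, not an asymptotic), so one only needs the limit of a single expression, and then Lemma \ref{la:generalized_error_asymptotic_form} identifies that limit. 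Once the interchange of limit and integral is justified, the statement follows immediately; I would therefore spend the bulk of the write-up on the dominated-convergence argument and treat the matching of combinatorial data as a direct consequence of Lemma \ref{la:generalized_error_asymptotic_form}.
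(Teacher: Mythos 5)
Your proposal is correct and follows exactly the route the paper intends: the lemma is stated there as a direct consequence of combining Lemma \ref{la:generalized_error_asymptotic_form} (for the limit of $E_{Q,M}(t\bm{x})$) with the iterated-integral identity of Lemma \ref{la:ErrorIntegral} in its $w$-normalized form, which is precisely your argument. Your additional care about justifying the interchange of the $t\to\infty$ limit with the outermost integral via Gaussian decay is a reasonable elaboration of a step the paper leaves implicit.
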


\section{Higher Depth False Theta Functions}\label{sec:higher_depth_false_theta}
Now that the technical groundwork for generalized error functions is laid, we move to studying the completed false theta functions $\wh{\Psi}_{\bm{\mu}} ( \bm{z}, \bm{\zz}; \tau, w)$ and their modular properties by following the arguments of \cite{BN}, which studies the $r=1$ case.
It is convenient for this purpose to work with an auxiliary, rescaled function 
\begin{equation}\label{eq:tildePsi_definition}
\wt{\Psi}_{\bm{\mu}} ( \bm{z}, \bm{\zz}; \tau, w) := 
\lp i (w-\t) \rp^{\frac{r}{2}} 
\wh{\Psi}_{\bm{\mu}} ( \bm{z}, \bm{\zz}; \tau, w) ,
\end{equation}
which, as we see below, eliminates the branch cut due to $\sqrt{i(w-\t)}$ and the corresponding sign-factor $\chi_{\tau, w}$ appearing in the modular transformations (see Theorem \ref{co:modularity}), while still retaining a covariant behavior under modular transformations.\footnote{However, it is still $\wh{\Psi}$ that asymptotes to false theta functions and this is why we designate it as the main object to study.} The next thing we note is the decomposition 
\begin{equation}\label{zDecompose}
\bm{z} =  \frac{\bm{\zz} -\bm{z}}{w-\t}  \t - \frac{\bm{\zz} \t - \bm{z} w }{w-\t},
\end{equation}
which is analogous to the familiar decomposition $\bm{z} = \bm{a} \t + \bm{b}$, where $\bm{a},\bm{b} \in \IR^n$ are given by $\bm{a}:= \frac{\mathrm{Im}(\bm{z})}{\mathrm{Im}(\t)}$ and $\bm{b} := - \frac{\mathrm{Im}(\bm{z} \bar{\t})}{\mathrm{Im}(\t)}$. In fact, $\frac{\bm{\zz} -\bm{z}}{w-\t}$ and $-\frac{\bm{\zz} \t - \bm{z} w }{w-\t}$ transform under the modular and elliptic transformations listed in Theorem \ref{co:modularity} in  the same way as $\bm{a}$ and $\bm{b}$ would for the modular and elliptic transformations of $(\bm{z},\t)$ under the Jacobi group. This provides another motivation for the complexified modular and elliptic transformations given in Theorem \ref{co:modularity} and the factor of $ \frac{\bm{\zz} -\bm{z}}{w-\t}$ inside the generalized error function appearing in the definition of $\wh{\Psi}$ in \eqref{eq:hat_psi_definition}. 
With this background at hand, we are now ready to state the properties of the auxiliary function $\wt{\Psi}$.
\begin{thm}\label{thm:wtPsi_modularity}
The function $\wt{\Psi}: \IC^n \times \IC^n \times (\IH \times \IH \setminus \{ \t = w \}) \to \IC$ is a holomorphic function that satisfies the following elliptic and modular transformations.
\begin{enumerate}[leftmargin=0.7cm, label={ \rm (\arabic*)}]
\item For $\bm{m},\bm{r} \in \LL$, we have
\begin{equation*}
\wt{\Psi}_{\bm{\mu}} ( \bm{z} + \bm{m}\t + \bm{r}, \bm{\zz}+ \bm{m}w + \bm{r}; \tau, w)  = 
(-1)^{2Q(\bm{m}+\bm{r})} q^{-Q(\bm{m})} e^{-2\pi i B(\bm{m},\bm{z})}
\wt{\Psi}_{\bm{\mu}} ( \bm{z}, \bm{\zz}; \tau, w).
\end{equation*}
\item Under modular transformation, we have
\begin{align*}
\wt{\Psi}_{\bm{\mu}} ( \bm{z}, \bm{\zz}; \tau+1, w+1) &=
e^{2\pi iQ \lp \bm{\mu} + \frac{\bm{\ell}}{2} \rp}
\wt{\Psi}_{\bm{\mu}} ( \bm{z}, \bm{\zz}; \tau, w), \\
\wt{\Psi}_{\bm{\mu}} \lp \frac{\bm{z}}{\t}, \frac{\bm{\zz}}{w}; 
-\frac{1}{\tau}, -\frac{1}{w} \rp
&=\frac{(-i)^{\frac{n}{2}} e^{-\pi i Q(\bm{\ell})}}{\sqrt{|\LL^* / \LL |}} 
\t^{\frac{n-r}{2}} w^{-\frac{r}{2}} e^{2\pi i \frac{Q(\bm{z})}{\t}}
\sum_{\bm{\nu} \in \LL^*/\LL} e^{-2\pi i B(\bm{\mu},\bm{\nu})}
\wt{\Psi}_{\bm{\nu}} ( \bm{z}, \bm{\zz}; \tau, w) .
\end{align*}
\end{enumerate}
\end{thm}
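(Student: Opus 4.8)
The plan is to prove Theorem~\ref{thm:wtPsi_modularity} by working directly with the defining series \eqref{eq:hat_psi_definition}, exactly as in the $r=1$ case of \cite{BN}. First I would establish holomorphicity. The only potential issue is the factor $\sqrt{i(w-\t)}$ inside the argument of $E_{Q,M}$, but after multiplying by $(i(w-\t))^{r/2}$ to form $\wt\Psi$, the combination $(i(w-\t))^{r/2} E_{Q,M}(-i\sqrt{i(w-\t)}\,(\bm n + (\bm\zz-\bm z)/(w-\t)))$ is even in $\sqrt{i(w-\t)}$ — this is visible from Lemma~\ref{la:ErrorIntegral}, since each of the $\ell$ factors $B(\bm v_{j,\s},\bm x)\,t_j$ together with one remaining $(i(w-\t))^{1/2}$ per integration variable and the overall powers combine into integer powers of $(w-\t)$, and the $\ell<r$ terms carry $E_{Q,M_{S}}(\bm0)$ with $|S|$ having the right parity. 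So $\wt\Psi$ is single-valued and holomorphic on $\IC^n\times\IC^n\times(\IH\times\IH\setminus\{\t=w\})$; uniform convergence of the Gaussian-type sum on compact subsets (away from $\t=w$) gives holomorphicity and justifies all manipulations below.

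Next, the elliptic transformation (1). Shifting $\bm z\mapsto \bm z+\bm m\t+\bm r$, $\bm\zz\mapsto\bm\zz+\bm m w+\bm r$ and correspondingly reindexing $\bm n\mapsto\bm n-\bm m$ in the sum: the key observation is that $(\bm\zz-\bm z)/(w-\t)$ shifts by $-\bm m$, which is absorbed by the reindexing, so the argument of $E_{Q,M}$ is unchanged. The remaining bookkeeping is with $q^{Q(\bm n)} e^{2\pi i B(\bm n,\bm z+\bm\ell/2)}$, where one completes the square in the standard theta-function manner; the characteristic-vector condition $Q(\bm m)+\tfrac12 B(\bm\ell,\bm m)\in\Z$ produces the sign $(-1)^{2Q(\bm m+\bm r)}$ (note $2Q(\bm m+\bm r)=2Q(\bm m)+2Q(\bm r)+2B(\bm m,\bm r)$ and $B(\bm m,\bm r)\in\Z$, so modulo $2$ only the $\bm\ell$-twist contributes the cross term). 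This is routine.

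Then the modular transformations (2). The $\t\mapsto\t+1$, $w\mapsto w+1$ case is immediate: $i(w+1-(\t+1))=i(w-\t)$, the argument of $E_{Q,M}$ is unchanged, and $q^{Q(\bm n)}$ picks up $e^{2\pi i Q(\bm n)}$; writing $\bm n=\bm\mu+\tfrac{\bm\ell}{2}+(\text{lattice})$ and using the characteristic-vector condition to see $Q(\bm n)-Q(\bm\mu+\tfrac{\bm\ell}{2})\in\Z$ yields the constant $e^{2\pi i Q(\bm\mu+\bm\ell/2)}$. The inversion $\t\mapsto-1/\t$, $w\mapsto-1/w$ is the substantive step. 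The strategy is Poisson summation over $\LL+\bm\mu+\tfrac{\bm\ell}{2}$: one writes $E_{Q,M}$ via its integral representation \eqref{eq:generalized_error_definition} so that the summand becomes a genuine Gaussian in $\bm n$ (times a sign-function integrand and exponential factors), applies Poisson summation, and identifies the Fourier transform. The crucial algebraic facts are: (i) $i(w-\t)\mapsto i(w-\t)/(\t w)$ under inversion, whose square-root produces $\chi_{\t,w}$ — which is why $\wt\Psi$ with its $(i(w-\t))^{r/2}$ normalization absorbs exactly $\chi_{\t,w}^r$ into the clean factor $\t^{(n-r)/2}w^{-r/2}$ (rather than leaving a branch sign, as for $\wh\Psi$ in Theorem~\ref{co:modularity}); (ii) the argument $\bm n+(\bm\zz-\bm z)/(w-\t)$ transforms so that, after the substitution $\bm z\mapsto\bm z/\t$, $\bm\zz\mapsto\bm\zz/w$, one has $(\bm\zz/w-\bm z/\t)/(1/\t-1/w)=\t w\cdot\frac{\bm\zz\t-\bm z w}{w-\t}\cdot\frac{1}{\t w}$-type identities that reproduce the right combination; (iii) the Gaussian prefactor $e^{2\pi i Q(\bm z)/\t}$ emerges from completing the square in the exponential $e^{2\pi i B(\bm n,\bm z+\bm\ell/2)}q^{Q(\bm n)}$ after the Fourier dual variable is introduced; (iv) the factor $(-i)^{n/2}/\sqrt{|\LL^*/\LL|}$ and the sum over $\bm\nu\in\LL^*/\LL$ with $e^{-2\pi i B(\bm\mu,\bm\nu)}$ come from the standard dualization of a lattice theta function, and $e^{-\pi i Q(\bm\ell)}$ from the $\bm\ell$-twist. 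A clean way to organize this is to reduce to the case $\bm\ell=\bm0$ by absorbing $\tfrac{\bm\ell}{2}$ into $\bm z$ and $\bm\mu$, and to reduce the $M$-dependence by first dualizing the lattice and then pushing the sign-integrand through (the orthogonal projection remark at the start of Section~\ref{sec:generalized_error} guarantees the $M$-argument only sees the $M$-subspace and transforms covariantly).

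The main obstacle I expect is (iv) together with keeping the $\sqrt{\cdot}$ branches consistent through Poisson summation: one must verify that the branch of $(i(w-\t))^{r/2}$ and the branches implicit in $-i\sqrt{i(w-\t)}$ recombine under inversion into precisely $\chi_{\t,w}^r\,\t^{(n-r)/2}w^{-r/2}$ with no spurious sign, and that the Gaussian integral over the Poisson-dual variable (which is an $n$-dimensional complex Gaussian with quadratic form $Q/(\text{something involving }\t)$, possibly indefinite in the continued regime) is evaluated on the correct branch to give $(-i\t)^{n/2}$-type factors. This is the same delicate point handled in \cite{BN} for $r=1$ and in \cite{ABMP, Na} for indefinite theta functions, and I would follow those references closely, treating the sign-factor $\sgn(B(M,\bm y))$ in \eqref{eq:generalized_error_definition} as an inert weight that is unaffected by the Poisson summation (which acts only on the lattice sum, not the auxiliary $\bm y$-integral) once the $\bm y$-integral and the $\bm n$-sum are interchanged.
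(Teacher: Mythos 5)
Your proposal follows essentially the same route as the paper: holomorphicity of $\wt\Psi$ via the parity argument from Lemma \ref{la:ErrorIntegral} (only integer powers of $w-\t$ survive because $E_{Q,M_S}(\bm 0)$ vanishes for odd $|S|$) together with Gaussian bounds giving normal convergence, the elliptic and translation laws by direct reindexing, and the inversion by Poisson summation generalizing Lemma 2.1 of \cite{BN}, with the sign integrand riding inertly through the Gaussian Fourier transform of $f_{\t,w}$. The approach and the key intermediate facts match the paper's proof, so the plan is correct as stated.
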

\begin{proof}
To investigate the properties of $\wt{\Psi}_{\bm{\mu}}$, we use \eqref{eq:hat_psi_definition}, \eqref{eq:tildePsi_definition}, and \eqref{zDecompose} to write
\begin{equation}\label{PsiAlternative}
\wt{\Psi}_{\bm{\mu}} ( \bm{z}, \bm{\zz}; \tau, w)  = 
e^{- 2 \pi i  Q \lp \frac{\bm{\zz} -\bm{z}}{w-\t}  \rp \t}
\sum_{\bm{n} \in \LL + \bm{\mu} + \frac{\bm{\ell}}{2}}
f_{\t,w} \lp \bm{n} + \frac{\bm{\zz} -\bm{z}}{w-\t}   \rp
e^{2\pi i B\lp \bm{n}, \frac{\bm{\ell}}{2}  - \frac{\bm{\zz} \t - \bm{z} w }{w-\t}  \rp} ,
\end{equation}
where we introduce
\begin{equation*}
f_{\t,w} (\bm{x}) := (i (w-\t))^{\frac{r}{2}} e^{2 \pi i Q(\bm{x}) \t}
E_{Q,M} \lp -i \sqrt{i \lp w-\t\rp }\bm{x}\rp .
\end{equation*}
Thanks to Lemma \ref{la:ErrorIntegral}, this function can be rewritten as 
\begin{align}
&f_{\t,w} (\bm{x})  
=  e^{2 \pi i Q\lp\bm{x}\rp  \t}
\sum_{\s \in S_r} \sum_{\ell=0}^r 
\frac{2^\ell i^{\frac{r-\ell}{2}}  \lp w-\t\rp ^{\frac{r+\ell}{2}}  }{\lp r-\ell\rp !} 
E_{Q,M_{\s[\ell+1,r]\perp \s[1,\ell]}} ( \bm{0}) 
\int_0^1    B( \bm{v_{1,\s}}, \bm{x} )
e^{\pi i B\lp \bm{v_{1,\s}}, \bm{x} \rp ^2 \lp w-\t\rp t_1^2 }
\notag
\\ &\qquad
\times
\int_0^{t_1}  B( \bm{v_{2,\s}}, \bm{x} )
e^{\pi i B\lp \bm{v_{2,\s}}, \bm{x} \rp ^2 \lp w-\t\rp t_2^2 } \ldots
\int_0^{t_{\ell-1}} B( \bm{v_{\ell,\s}}, \bm{x} )
e^{\pi i B\lp \bm{v_{\ell,\s}}, \bm{x} \rp ^2 \lp w-\t\rp t_\ell^2 } 
d t_\ell\dots  d t_1.
\label{eq:Ftw_definition}
\end{align}
Note that $E_{Q, M_{\s[\ell+1,r]\perp \s[1,\ell]}} ( \bm{0})$ is zero unless $r - \ell$ is even. Therefore, only the summands in which the term $\lp w-\t\rp ^{\frac{r+\ell}{2}}$ has an integer power contribute nontrivially to the sum. This ensures that $(\bm{x}, \t,w) \mapsto f_{\t,w} (\bm{x})$ is holomorphic on $\mathbb{C}^n \times \mathbb{H} \times \mathbb{H}$. Moreover, if $(\bm{z}, \bm{\zz}; \tau, w)$ belongs to a compact subset of $\IC^n \times \IC^n \times (\IH \times \IH \setminus \{ \t = w \})$, we have a bound of the form
\begin{equation}
\left| f_{\t,w} \lp \bm{n} + \frac{\bm{\zz} -\bm{z}}{w-\t}   \rp
e^{2\pi i B\lp \bm{n}, \frac{\bm{\ell}}{2}  - \frac{\bm{\zz} \t - \bm{z} w }{w-\t}  \rp}  \right| 
\leq C_1 e^{-C_2 Q(\bm{n})},
\label{eq:Ftw_bound}
\end{equation}
where the constants $C_1,C_2 > 0$ depend on the chosen compact subset. The most important contribution to obtain such a bound is the Gaussian part of the integrand in equation \eqref{eq:Ftw_definition}, which contributes as
\begin{equation*}
\left|  e^{2 \pi i Q\lp\bm{n}\rp  \t
+ \pi i \sum_{j=1}^\ell  B\lp \bm{v_{j,\s}}, \bm{n}  \rp ^2 (w-\t) t_j^2 }   \right|
\leq 
e^{-2 \pi  Q\lp\bm{n}\rp  \mathrm{Im}(\t) }
\hspace{-0.05cm}
\begin{cases}
1  & \mbox{if } \mathrm{Im} (w) \geq \mathrm{Im} (\t) \\
e^{- \pi \sum_{j=1}^\ell  B\lp \bm{v_{j,\s}}, \bm{n}  \rp ^2 \mathrm{Im}\lp w-\t\rp }
& \mbox{if } \mathrm{Im} (w) < \mathrm{Im} (\t) .
\end{cases}
\end{equation*}
Upon noting $\sum_{j=1}^{r} B(\bm{v_{j,\s}},\bm{x})^2\leq 2Q(\bm{x})$, we find that this Gaussian part is bounded by  
\begin{equation*}
e^{-2 \pi  Q\lp\bm{n}\rp  \min ( \mathrm{Im}(\t), \mathrm{Im}(w) ) } \leq e^{-2 C_2 Q(\bm{n})}
\end{equation*}
for some constant $C_2 >0$ depending on the chosen compact subset. Using part of the Gaussian $e^{-C_2 Q(\bm{n})}$ to uniformly bound the remaining components then yields the result.

The bound in \eqref{eq:Ftw_bound} implies that the series defining $\wt{\Psi}$ in equation \eqref{PsiAlternative} is normally convergent on compact subsets of $\IC^n \times \IC^n \times (\IH \times \IH \setminus \{ \t = w \})$ and hence that $\wt{\Psi}$ is a holomorphic function of $(\bm{z}, \bm{\zz}; \tau, w)$ on $\IC^n \times \IC^n \times (\IH \times \IH \setminus \{ \t = w \})$. 
It is also important to note here that the derivatives of $f_{\t,w} (\bm{n} + \frac{\bm{\zz} -\bm{z}}{w-\t})
e^{2\pi i B (\bm{n}, \frac{\bm{\ell}}{2}  - \frac{\bm{\zz} \t - \bm{z} w }{w-\t})}$ with respect to $\bm{n}$ can also be bounded to be $O(e^{-C_2 Q(\bm{n})})$ on compact subsets.\footnote{Here we can use the same coefficient $C_2 >0$ as the one deduced in \eqref{eq:Ftw_bound}. This is possible because taking derivatives with respect to $\bm{n}$ does not change the Gaussian part of $f_{\t,w} (\bm{n} + \frac{\bm{\zz} -\bm{z}}{w-\t}) e^{2\pi i B(\bm{n}, \frac{\bm{\ell}}{2}  - \frac{\bm{\zz} \t - \bm{z} w }{w-\t})}$ and we can use the same arguments as above (with the implied multiplicative constant in front depending on the applied derivative operator).} This shows that the summand in equation \eqref{PsiAlternative} is a Schwartz function of $\bm{n}$ and justifies the use of Fourier techniques we employ next in studying modular transformation properties of $\wt{\Psi}$.

The elliptic and the modular translation simply follow from the definition of $\wt{\Psi}$ through equations \eqref{eq:tildePsi_definition} and \eqref{eq:hat_psi_definition}. To prove the modular inversion, we simply apply Poisson summation. Here the details can be followed as in Theorem 2.2 of \cite{BN}. We work with the  representation of $\wt{\Psi}$ in \eqref{PsiAlternative} and note that the appropriate generalization of Lemma 2.1 of \cite{BN} is given by 
\begin{equation*}
\CF (f_{\t,w}) (\bm{x}) = (-i)^{-\frac{n}{2}} \t^{-\frac{n-r}{2}}  w^{\frac{r}{2}} f_{-\frac{1}{\t}, -\frac{1}{w}} (\bm{x}) ,
\end{equation*}
defining the Fourier transform of a function $f: \IR^n \to \IC$ with respect to $Q$ as
\begin{equation*}
\CF ( f) (\bm{x}) := \det (A)^{\frac{1}{2}}
\int_{\IR^n}  f(\bm{y}) e^{-2\pi i B(\bm{x},\bm{y})} d^n \bm{y} . \qedhere
\end{equation*}
\end{proof}

We are now ready to prove Theorem \ref{co:modularity}.
\begin{proof}[Proof of Theorem \ref{co:modularity}]
Dividing the statements of Theorem \ref{thm:wtPsi_modularity} by $ (i (w-\t)) ^{\frac{r}{2}} $ we obtain the claimed elliptic and modular transformations of $\wh{\Psi}_{Q,\bm{\mu}} ( \bm{z}, \bm{\zz}; \tau, w)$. Next we use Lemma \ref{la:generalized_error_asymptotic_form} to find
\begin{multline}
\lim_{w \to \t + i \infty} \wh{\Psi}_{Q,\bm{\mu},\bm{\ell},M} (\tau, w)
= 
\sum_{\bm{n} \in \LL + \bm{\mu} + \frac{\bm{\ell}}{2}}
\sgn ( B ( M,  \bm{n} ))	e^{\pi i B \lp \bm{n}, \bm{\ell} \rp} q^{Q(\bm{n})} 
 \\
+ \sum_{\substack{S \subseteq [r] \\ S \neq [r] }} E_{Q, M_{S} } ( \bm{0})
\sum_{\substack{  \bm{n} \in \LL + \bm{\mu} + \frac{\bm{\ell}}{2} \\   B(M_{S}, \bm{n})=\bm{0} }}
\sgn \left(B\left(M_{[r] \setminus S},  \bm{n} \right) \right)  e^{\pi i B \lp \bm{n}, \bm{\ell} \rp} q^{Q(\bm{n})} .
\label{eq:widehat_psi_limit}
\end{multline}
The first term on the right-hand side reproduces the false theta function $F_{Q,\bm{\mu}, \bm \ell,M} (\tau)$ defined in equation \eqref{false.gen}, whereas the second term consists of corrections built from false (or ordinary) theta functions with lower-dimensional lattices and fewer sign function factors.

Finally, using Lemma \ref{lem:EQMComplexRecursion}  we find
\begin{multline*}
\wh{\Psi}_{\bm{\mu}} (\tau, w)
=
E_{Q,M}( \bm{0})  \sum_{\bm{n} \in \LL + \bm{\mu} + \frac{\bm{\ell}}{2}} e^{\pi i B \lp \bm{n}, \bm{\ell} \rp} q^{Q(\bm{n})} 
+
\sum_{k=1}^r \int_{\t}^{w} 
\sum_{\bm{n} \in \LL + \bm{\mu} + \frac{\bm{\ell}}{2}}  
B(\widehat{\bm{m}}_{\bm{k}}, \bm{n}) e^{ \pi i B(\widehat{\bm{m}}_{\bm{k}}, \bm{n})^2 w_1 }
\\ \times
e^{\pi i B \lp \bm{n}, \bm{\ell} \rp}
E_{  Q,M_{[r] \setminus \{k\}\perp \{k\}}}  \left( -i  \sqrt{i(w_1-\t)}  \bm{n} \right) 
q^{Q(\bm{n})-\frac{1}{2} B(\widehat{\bm{m}}_{\bm{k}}, \bm{n})^2}
\frac{d w_1}{\sqrt{i(w_1-\t)}}  .
\end{multline*}
When all the vectors in $M$ are elements of $\LL$, the lattice sum in the integrand can be decomposed into unary theta functions along the direction of the lattice vectors $\bm{m_k}$ and completed false theta functions on $(n-1)$-dimensional lattices orthogonal to $\bm{m_k}$ and that include $r-1$ vectors in $M_{[r] \setminus \{k\}\perp \{k\}}$ (which up to rescaling also lie in this orthogonal lattice) appearing in the generalized error functions. 
Therefore, the derivative of $w\mapsto\wh{\Psi}_{\bm{\mu}} (\tau, w)$ is of the form of equation \eqref{eq:false_completion_derivative} with $r_j = -\frac{1}{2}$ for each $j$ and we find that (as can be seen inductively) $\wh{\Psi}_{\bm{\mu}} (\tau, w)$ is the completion of a depth $r$, weight $\frac{n}{2}$ false modular form.
\end{proof}

\begin{rem}
It is useful to emphasize here that the right-hand side of equation \eqref{eq:widehat_psi_limit} is what we call a false modular form (if the vectors in $M$ are elements of $\LL$). This includes the false theta function $F_{Q,\mu, \bm \ell,M} (\tau)$ as its leading contribution, but it also contains false (or ordinary) theta functions on lower-dimensional lattices (and with fewer sign function factors). Conversely, unless these corrections vanish, the false theta function $F_{Q,\mu, \bm \ell,M} (\tau)$ should be recognized as a linear combination of false modular forms including one term of depth $r$ and weight $\frac{n}{2}$ and with the rest having lower weights and depths. 

As an illustration, see part \ref{prop_part:A3_psi_boundary} of Proposition \ref{prop:A_3_false_theta} and part \ref{prop_part:A3_phi_boundary} of Proposition \ref{prop:PhiA3}, where rank three false theta functions are supplemented with rank one false theta functions in order to form a false modular form with a well-defined weight and depth. In other words, if we omit these lower rank corrections and only restrict our attention to the highest rank false theta function, then the resulting object is a combination of objects with different weights and depths.
\end{rem}

An important upshot here is that by starting with the modular completion \eqref{eq:hat_psi_definition}, one can automatically deduce the correct linear combinations appearing as limits of objects with modular properties. The modular completions $\wh{\Psi}_{\bm{\mu}} ( \bm{z}, \bm{\zz}; \tau, w)$ are also useful for developing differential operators that are covariant under modular transformations. In turn, these operators lead us to a much larger class of false modular forms and this is the topic that we turn our attention to next. We start by defining the operators
\begin{align}\label{eq:diff_operators}
R_k  &:= 
\del_\t + \frac{(\bm{z}-\bm{\zz})^T}{\t-w} \bm{\del_z} + 2 \pi i Q\lp \frac{\bm{z}-\bm{\zz}}{\t-w} \rp + \frac{k}{\t-w},\quad L := (\t-w) \lp (\t-w) \del_w + (\bm{z} - \bm{\zz})^T \bm{\del_\zz} \rp ,
\notag
\\
\bm{\CD} &:= \bm{\del_z} + \frac{2 \pi i}{\tau-w} A ( \bm{z}-\bm{\zz} ),
\ \  \andd \ \ 
\bm{\cd} := (\t-w) \bm{\del_\zz} .
\end{align}
These operators implicitly depend on the quadratic form $Q$ (through the associated symmetric matrix $A$) and they are the natural generalizations of covariant differential operators acting on real-analytic Jacobi forms as weight raising and lowering operators.

\begin{prop}\label{prop:ModularlyCovariant}
Let $r \in \{0,1\}$ and let $\bm{g} : \mathbb{C}^n \times \mathbb{C}^n \times (\mathbb{H} \times \mathbb{H} \setminus \{ \t = w \} ) \to \mathbb{C}^m$ be a vector-valued function such that $(i (w-\t))^{\frac{r}{2}}\bm{g} (\bm{z}, \bm{\zz}; \tau,w)$ is holomorphic on $ \mathbb{C}^n \times \mathbb{C}^n \times (\mathbb{H} \times \mathbb{H} \setminus \{ \t = w \} )$. 
Consider the following action of the Jacobi group $\SL_2 (\IZ) \ltimes \LL^{2}$ on such functions 
\begin{align}
	\bm{g} |_{k,Q} (\g, \bm{m}, \bm{r}) (\bm{z}, \bm{\zz}; \tau,w) 
	&:=
	\chi_{\t,w} (\g)^r \, (c \t + d)^{-k}
	\, e^{2 \pi i \lp - \frac{c Q(\bm{z}+\bm{m}\tau+\bm{r})}{c \t + d} 
		+ Q(\bm{m}) \t + B(\bm{m},\bm{z}) + Q(\bm{m}+\bm{r}) \rp}
	\notag
	\\ & \qquad \quad \times
	\rho(\g)^{-1}
	\bm{g} \lp \frac{\bm{z}+\bm{m}\tau+\bm{r}}{c \t +d}, \frac{\bm{\zz}+\bm{m}w+\bm{r}}{c w +d}; 
	\frac{a \tau + b}{c \tau + d}, \frac{a w+b}{cw+d} \rp,
	\label{eq:Jacobi_action}
\end{align}
where $\g = \pmat{a & b \\ c & d} \in \SL_2 (\IZ)$, $\bm{m},\bm{r} \in \LL$, 
\begin{equation*}
	\chi_{\t,w} (\g) := \sqrt{\frac{i(w-\t)}{(c \t+d)(c w +d)} } \frac{\sqrt{c \t+d} \sqrt{c w+d} }{ \sqrt{i(w-\t)} } \in \{ \pm 1\},
\end{equation*}
and the multiplier system $\rho(\g)$ is an $m \times m$ unitary matrix forming a projective representation of the modular group in a way that makes the action well-defined.
The differential operators defined in equation \eqref{eq:diff_operators} are covariant with respect to \eqref{eq:Jacobi_action} as follows
\begin{align*}
\CD_\a (\bm{g} |_{k,Q} (\g, \bm{m}, \bm{r})) &= \CD_\a (\bm{g})|_{k+1,Q} (\g, \bm{m}, \bm{r}),
\qquad
\cd_\a (\bm{g} |_{k,Q} (\g, \bm{m}, \bm{r})) = \cd_\a (\bm{g})|_{k-1,Q} (\g, \bm{m}, \bm{r}), 
\\
R_k (\bm{g} |_{k,Q} (\g, \bm{m}, \bm{r})) &= R_k (\bm{g})|_{k+2,Q} (\g, \bm{m}, \bm{r}),
\qquad
\hspace{.3cm}L (\bm{g} |_{k,Q} (\g, \bm{m}, \bm{r})) = L (\bm{g})|_{k-2,Q} (\g, \bm{m}, \bm{r}), 
\end{align*}
where $\a \in \{1,2,\ldots,n \}$.
Furthermore, these differential operators satisfy the algebraic relations
\begin{align*}
[\CD_\a, \CD_\b] = [\cd_\a, \cd_\b] &= 0, 
  \qquad \qquad
[\CD_\a, \cd_\b] = 2 \pi i A_{\a\b},
  \qquad 
L\circ R_k-R_{k-2}\circ L = k,
 \\ 
\cd_\a \circ R_k - R_{k-1} \circ \cd_\a &= - \CD_\a ,
\qquad \quad \,
[\cd_\a, L]  = 0 ,
\\  
\CD_\a \circ R_k - R_{k+1} \circ \CD_\a &= 0 ,
\qquad \qquad \ \ \,
[\CD_\a, L]  = \cd_\a .
\end{align*}
where $\a,\b \in \{1,2,\ldots,n \}$.
\end{prop}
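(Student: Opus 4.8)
The strategy is to verify the covariance of each operator directly from the explicit cocycle factor in \eqref{eq:Jacobi_action}, and then establish the commutation relations by routine operator algebra. Covariance is a purely local statement (no convergence or growth issues are involved beyond the holomorphicity hypothesis on $(i(w-\t))^{r/2}\bm{g}$), so it suffices to compute how each differential operator interacts with the prefactor
\[
J(\g,\bm{m},\bm{r};\bm{z},\bm{\zz};\t,w):=\chi_{\t,w}(\g)^r (c\t+d)^{-k} e^{2\pi i\lp -\frac{cQ(\bm{z}+\bm{m}\t+\bm{r})}{c\t+d}+Q(\bm{m})\t+B(\bm{m},\bm{z})+Q(\bm{m}+\bm{r})\rp}\rho(\g)^{-1}
\]
together with the change of variables $\bm{z}\mapsto\frac{\bm{z}+\bm{m}\t+\bm{r}}{c\t+d}$, $\bm{\zz}\mapsto\frac{\bm{\zz}+\bm{m}w+\bm{r}}{cw+d}$, $\t\mapsto\gamma\t$, $w\mapsto\gamma w$. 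The decisive algebraic inputs are the elementary identities $\frac{d}{d\t}\frac{a\t+b}{c\t+d}=(c\t+d)^{-2}$, $\frac{a\t+b}{c\t+d}-\frac{aw+b}{cw+d}=\frac{\t-w}{(c\t+d)(cw+d)}$, and the way $A(\bm z-\bm\zz)$ and $B(\bm m,\bm z)$ scale under the substitution; the factor $\chi_{\t,w}(\g)$ is locally constant so it is annihilated by every derivative.

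\textbf{Order of steps.}
First I would treat $\bm{\CD}=\bm{\del_z}+\frac{2\pi i}{\t-w}A(\bm z-\bm\zz)$, since it is the model case. Acting with $\del_{z_\a}$ on $\bm g|_{k,Q}(\g,\bm m,\bm r)$ produces, by the chain rule, a factor $(c\t+d)^{-1}$ from differentiating the argument $\frac{\bm z+\bm m\t+\bm r}{c\t+d}$, plus a term from differentiating $J$ (the only $\bm z$-dependence in $J$ is through $e^{2\pi i(-cQ(\bm z+\bm m\t+\bm r)/(c\t+d)+B(\bm m,\bm z))}$, yielding $2\pi i\lp -\frac{c}{c\t+d}A(\bm z+\bm m\t+\bm r)+A\bm m\rp_\a$). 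One checks that this extra term is exactly cancelled by the shift of the connection piece: under the substitution, $\frac{2\pi i}{\t-w}A(\bm z-\bm\zz)$ becomes $\frac{2\pi i(c\t+d)(cw+d)}{\t-w}A\lp\frac{\bm z+\bm m\t+\bm r}{c\t+d}-\frac{\bm\zz+\bm m w+\bm r}{cw+d}\rp$, and expanding the difference and using $Q(\bm z+\bm m\t+\bm r)=Q(\bm z)+\dots$ makes the two contributions match, leaving overall a clean factor $(c\t+d)^{-(k+1)}$ — i.e.\ weight $k+1$. The operator $\bm\cd=(\t-w)\bm{\del_\zz}$ is easier: $J$ has no $\bm\zz$-dependence, so only the argument and the prefactor $(\t-w)\mapsto\frac{\t-w}{(c\t+d)(cw+d)}$ contribute, and the two powers $(cw+d)^{-1}$ (from $\del_\zz$ of the argument) and $(cw+d)^{+1}$ (inverse, from $\t-w$) against $(c\t+d)^{-1}$ give weight $k-1$. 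Then I would do $R_k$ (this is the Jacobi weight-raising operator; here the key is that the $k/(\t-w)$ term compensates the $\del_\t$ acting on $(c\t+d)^{-k}$, and the $2\pi i Q$ and $(\bm z-\bm\zz)^T/(\t-w)\,\bm{\del_z}$ terms organize the remaining pieces), and finally $L$ analogously. Having established covariance, the commutator identities follow by direct differentiation: $[\CD_\a,\CD_\b]=0$ because the connection $\frac{2\pi i}{\t-w}A(\bm z-\bm\zz)$ has a symmetric, $\bm z$-independent "curvature"; $[\CD_\a,\cd_\b]=2\pi i A_{\a\b}$ comes from $\del_{z_\a}$ hitting $(\t-w)\del_{\zz_\b}$ through the connection term $\frac{2\pi i}{\t-w}(A(\bm z-\bm\zz))_\a$, producing $(\t-w)\cdot\frac{2\pi i}{\t-w}(-A_{\a\b})$ — wait, sign-check: $\del_{z_\a}\lp\frac{2\pi i}{\t-w}(A(\bm z-\bm\zz))_\a\rp$ acting after $(\t-w)\del_{\zz_\b}$ versus before gives the stated $+2\pi i A_{\a\b}$; and the remaining relations ($L\circ R_k-R_{k-2}\circ L=k$, $\cd_\a\circ R_k-R_{k-1}\circ\cd_\a=-\CD_\a$, $[\cd_\a,L]=0$, $\CD_\a\circ R_k-R_{k+1}\circ\CD_\a=0$, $[\CD_\a,L]=\cd_\a$) are each a short computation with the explicit forms in \eqref{eq:diff_operators}, using $\del_w(\t-w)=-1$, $\del_\t(\t-w)=1$, and the product rule.

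\textbf{Main obstacle.}
The only genuinely delicate point is the covariance of $R_k$ and the cross-check that the $\del_\t$-derivative of the $\t$-dependent Gaussian exponent $-cQ(\bm z+\bm m\t+\bm r)/(c\t+d)+Q(\bm m)\t$ in $J$ combines correctly with the argument-shift terms and the $2\pi i Q((\bm z-\bm\zz)/(\t-w))$ and $k/(\t-w)$ pieces of $R_k$; this requires carefully expanding $Q(\bm z+\bm m\t+\bm r)$ and tracking several partial cancellations, and it is essentially the computation that underlies why the index-$Q$ theta decomposition is modular. I expect this to be a page of bookkeeping but with no conceptual difficulty once the elliptic-exponent identity $-\frac{cQ(\bm z+\bm m\t+\bm r)}{c\t+d}+Q(\bm m)\t+B(\bm m,\bm z)+Q(\bm m+\bm r)$ is recognized as the standard Jacobi-form automorphy exponent adapted to the complexified variable $w$. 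All other verifications reduce to one- or two-line chain-rule computations.
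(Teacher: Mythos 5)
Your proposal is correct and follows essentially the same route as the paper, whose entire proof is the one-line statement that the proposition follows from a direct computation using \eqref{eq:diff_operators} and \eqref{eq:Jacobi_action}; your plan simply spells out that computation (chain rule against the automorphy factor for covariance, then routine operator algebra for the commutators, with the sign in $[\CD_\a,\cd_\b]=2\pi i A_{\a\b}$ coming out as you indicate).
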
 
\begin{proof}
The proposition statement follows from a direct computation using \eqref{eq:diff_operators} and 
\eqref{eq:Jacobi_action}.
\end{proof}
Proposition \ref{prop:ModularlyCovariant} implies that the differential operators in \eqref{eq:diff_operators} behave like raising and lowering operators if they act on the modular completions of false theta functions defined in  \eqref{eq:hat_psi_definition}. The resulting objects transform under elliptic and modular transformations exactly as described in Theorem \ref{co:modularity} with only the weight changing by $1$ for $\CD_\a$ and $-1$ for $\cd_\a$. The operator $\CD_\a$ is specifically important for our applications below since it produces false modular forms that feature false theta functions with extra polynomial insertions. We clarify this statement in the next lemma.

\begin{lemma}\label{la:derivativeLimit}
Let $P(\bm{x})\in \IC[x_1,\dots, x_n]$ be a homogeneous polynomial of degree $d$ and the completed false theta function $\wh{\Psi}_{\bm{\mu}} (\bm{z}, \bm{\zz}; \tau, w)$ be defined as in equation \eqref{eq:hat_psi_definition}.
Then
\begin{equation*}
\wh{\varphi}_{\bm{\mu}} (\t,w) := \left[P\lp \frac{A^{-1} \bm{\CD} }{2 \pi i} \rp \lp\wh{\Psi}_{\bm{\mu}} (\bm{z}, \bm{\zz}; \tau, w)\rp\right]_{\bm{z} = \bm{\zz} = \bm{0}}
\end{equation*}
is a vector-valued bimodular form transforming as 
\begin{align*}
\wh{\varphi}_{\bm{\mu}} (\tau+1, w+1) &=
e^{2\pi iQ \lp \bm{\mu} + \frac{\bm{\ell}}{2} \rp}
\wh{\varphi}_{\bm{\mu}} (\tau, w), \\
\wh{\varphi}_{\bm{\mu}} \lp -\frac{1}{\tau}, -\frac{1}{w} \rp
&= \chi_{\t,w}^r  \t^{d+\frac{n}{2} } 
\frac{e^{-\frac{\pi i n}{4}} e^{-\pi i Q(\bm{\ell})}}{\sqrt{|\LL^* / \LL |}} 
\sum_{\bm{\nu} \in \LL^*/\LL} e^{-2\pi i B(\bm{\mu},\bm{\nu})}
\wh{\varphi}_{\bm{\nu}} ( \tau, w) .
\end{align*}
Its boundary value $\varphi_{\bm{\mu}} (\t) := \lim_{w\to \t+i\infty} \wh{\varphi}_{\bm{\mu}} (\t,w)$ is given by
\begin{multline*}
\varphi_{\bm{\mu}} (\t) 
= 
\sum_{\bm{n} \in \LL + \bm{\mu} + \frac{\bm{\ell}}{2}}
\sgn ( B ( M,  \bm{n} ))  P(\bm{n})  e^{\pi i B \lp \bm{n}, \bm{\ell} \rp} q^{Q(\bm{n})} 
 \\
+ \sum_{\substack{S \subseteq [r] \\ S \neq [r] }} E_{Q, M_{S} } ( \bm{0})
\sum_{\substack{  \bm{n} \in \LL + \bm{\mu} + \frac{\bm{\ell}}{2} \\   B(M_{S}, \bm{n})=\bm{0} }}
\sgn \left(B\left(M_{[r] \setminus S},  \bm{n} \right) \right) P(\bm{n})  e^{\pi i B \lp \bm{n}, \bm{\ell} \rp} q^{Q(\bm{n})} .
\end{multline*}
\end{lemma}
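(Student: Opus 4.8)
The plan is to combine three ingredients already at our disposal: the modular covariance of the operator $\bm{\CD}$ from Proposition~\ref{prop:ModularlyCovariant}, the transformation law of $\wh{\Psi}_{\bm{\mu}}$ from Theorem~\ref{co:modularity}, and the asymptotic computation of the boundary value of $\wh{\Psi}_{\bm{\mu}}$ that appeared in the proof of Theorem~\ref{co:modularity} (equation~\eqref{eq:widehat_psi_limit}). First I would record that, by the commutation relation $\CD_\a \circ R_k - R_{k+1} \circ \CD_\a = 0$ and more directly by the weight-raising property $\CD_\a(\bm{g}|_{k,Q}(\g,\bm{m},\bm{r})) = \CD_\a(\bm{g})|_{k+1,Q}(\g,\bm{m},\bm{r})$, applying a homogeneous degree-$d$ polynomial in the components of $\frac{A^{-1}\bm{\CD}}{2\pi i}$ to $\wh{\Psi}_{\bm{\mu}}$ and then setting $\bm{z}=\bm{\zz}=\bm{0}$ produces a function whose weight has gone up by exactly $d$. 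Since the elliptic parameters are set to zero, the Jacobi action \eqref{eq:Jacobi_action} degenerates to the bimodular action with $\bm{m}=\bm{r}=\bm{0}$, and the transformation law for $\wh\varphi_{\bm\mu}$ stated in the lemma is exactly what \eqref{eq:Jacobi_action} gives with $k=\frac n2$ replaced by $k=\frac n2+d$; in particular the factor $(-i\t)^{n/2}$ from Theorem~\ref{co:modularity} becomes $e^{-\pi i n/4}\t^{n/2}$ after extracting the $\t^d$ coming from the extra weight, which matches the claimed $\chi_{\t,w}^r\,\t^{d+n/2}\,e^{-\pi i n/4}$.

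Next I would carry out the boundary limit. The key observation is that $\frac{A^{-1}\bm{\CD}}{2\pi i}$ acting on the summand of $\wh{\Psi}_{\bm{\mu}}$, and then evaluated at $\bm{z}=\bm{\zz}=\bm{0}$, reduces to multiplication of the summand's argument-vector by $\bm n$ in the appropriate sense: writing out \eqref{eq:modular_covariant_diff_op}, $\bm{\CD} = \bm{\del_z} + \frac{2\pi i}{\t-w}A(\bm z-\bm\zz)$, and the exponential $e^{2\pi i B(\bm n,\bm z+\bm\ell/2)}$ together with the generalized-error argument $-i\sqrt{i(w-\t)}(\bm n+\frac{\bm\zz-\bm z}{w-\t})$ conspire so that $\frac{A^{-1}\bm\CD}{2\pi i}$ brings down a factor of $\bm n$ (up to a piece proportional to the derivative of $E_{Q,M}$ in the off-diagonal directions, which one checks also contributes only $\bm n$-type insertions or vanishes in the limit). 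Iterating, $P\!\left(\frac{A^{-1}\bm\CD}{2\pi i}\right)$ inserts $P(\bm n)$ into the sum (one should be careful that cross-terms from the non-commuting pieces assemble correctly; this is where homogeneity of $P$ and the algebra relations $[\CD_\a,\CD_\b]=0$ are used, so the insertion is unambiguous). Then the limit $w\to\t+i\infty$ is handled exactly as in the proof of Theorem~\ref{co:modularity}: one invokes Lemma~\ref{la:generalized_error_asymptotic_form} term-by-term, legitimate because the bound \eqref{eq:Ftw_bound} (and its stated analogue for $\bm n$-derivatives) still gives normal convergence after inserting the polynomial $P(\bm n)$ which grows only polynomially. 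This yields the stated two-line formula for $\varphi_{\bm\mu}(\t)$ with $P(\bm n)$ inserted in both the leading false-theta term and in each of the lower-depth correction terms indexed by $S\subsetneq[r]$.

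The main obstacle, and the step deserving the most care, is verifying that $\left[P\!\left(\frac{A^{-1}\bm\CD}{2\pi i}\right)\wh\Psi_{\bm\mu}\right]_{\bm z=\bm\zz=\bm0}$ really equals the lattice sum with $P(\bm n)$ inserted, rather than $P(\bm n)$ plus lower-order-in-$\bm n$ correction terms coming from differentiating the generalized error function $E_{Q,M}$ itself. Here I would argue as follows: by Lemma~\ref{lem:generalized_error_derivative}, each derivative hitting $E_{Q,M}$ produces a Gaussian factor $e^{-\pi B(\bm m_k,\cdot)^2/(2Q(\bm m_k))}$ which, after the rescaling in \eqref{eq:hat_psi_definition} and the limit $w\to\t+i\infty$, forces $B(\bm m_k,\bm n)=0$; but on that locus the corresponding sign factor is indeterminate and one is precisely in the situation producing the $S\subsetneq[r]$ correction terms of Lemma~\ref{la:generalized_error_asymptotic_form}, so these contributions are already accounted for and carry the same polynomial $P(\bm n)$. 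A cleaner route, which I would prefer to present, is to note that $\bm\CD$ applied to $\wh\Psi$ is again (up to the weight shift) a completed theta-type object with a shifted lattice vector inserted, so that one may commute $P(A^{-1}\bm\CD/2\pi i)$ past the sum and the error function using only the product rule, collect all terms, and observe that upon setting $\bm z=\bm\zz=\bm0$ and taking the boundary limit every term in which a derivative landed on $E_{Q,M}$ reorganizes into the $S\subsetneq[r]$ family with $P(\bm n)$ inserted. Once this bookkeeping is settled, the transformation law follows immediately from Proposition~\ref{prop:ModularlyCovariant} and Theorem~\ref{co:modularity} as described above, and the convergence needed for the limit is inherited verbatim from the proof of Theorem~\ref{thm:wtPsi_modularity}.
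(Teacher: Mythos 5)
Your overall skeleton matches the paper's proof: the transformation laws follow from Proposition \ref{prop:ModularlyCovariant} combined with Theorem \ref{co:modularity}, and the boundary value is computed by letting $P\lp\frac{A^{-1}\bm{\CD}}{2\pi i}\rp$ act on the lattice sum, isolating the term where all derivatives hit the exponential $e^{2\pi i B(\bm{n},\bm{z})}$ (which produces $P(\bm{n})$), and then invoking Lemma \ref{la:generalized_error_asymptotic_form}. The final formula you arrive at is correct.

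However, your treatment of the cross terms --- those in which some derivative lands on $E_{Q,M}\lp -i\sqrt{i(w-\t)}\lp\bm{n}+\frac{\bm{z}}{\t-w}\rp\rp$ or on a factor $\frac{\bm{z}}{\t-w}$ --- contains a genuine error. You claim (both in your first pass and in your ``cleaner route'') that these terms ``reorganize into the $S\subsetneq[r]$ family with $P(\bm{n})$ inserted'' and are therefore ``already accounted for.'' That cannot be right: the $S\subsetneq[r]$ corrections in the stated formula arise entirely from the limit of $E_{Q,M}$ in the \emph{main} term via Lemma \ref{la:generalized_error_asymptotic_form}, so if the derivative terms also produced those same contributions you would be double-counting them. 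The correct mechanism, which is the one the paper uses, is that every such cross term simply vanishes as $w\to\t+i\infty$: by the chain rule each derivative hitting $E_{Q,M}$ produces a factor $-i\sqrt{i(w-\t)}\cdot\frac{1}{\t-w}=O\lp|w-\t|^{-1/2}\rp$, and by Lemma \ref{lem:generalized_error_derivative} what multiplies it is a Gaussian times a lower-rank generalized error function, whose lattice sum is bounded by an ordinary theta function (with polynomial insertions); likewise a derivative landing on $\frac{\bm{z}}{\t-w}$ contributes $O\lp|w-\t|^{-1}\rp$. Hence all cross terms are $O\lp|w-\t|^{-1/2}\rp$ and drop out of the limit, leaving only the main term, to which Lemma \ref{la:generalized_error_asymptotic_form} is then applied. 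You gesture at this possibility (``or vanishes in the limit'') but never establish it, and your preferred bookkeeping, taken literally, gives the wrong answer.
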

\begin{proof}
The modular properties are easy consequences of Theorem \ref{co:modularity} and Proposition \ref{prop:ModularlyCovariant}. To compute $\varphi_{\bm{\mu}} (\t)$ we start with the expression
\begin{align*}
\wh{\varphi}_{\bm{\mu}} (\t,w)
&=
\left[P\lp \frac{A^{-1}}{2 \pi i} \bm{\del_z} + \frac{\bm{z}}{\tau-w} \rp
\lp\wh{\Psi}_{\bm{\mu}} (\bm{z}, \bm{0}; \tau, w)\rp \right]_{\bm{z} =  \bm{0}}
\\
&=
\sum_{\bm{n} \in \LL + \bm{\mu} + \frac{\bm{\ell}}{2}}
E_{Q,M} \lp - i \sqrt{i (w-\t)} \bm{n} \rp  
P(\bm{n})  e^{\pi i B \lp \bm{n}, \bm{\ell} \rp} q^{Q(\bm{n})}  +
O \lp |w-\t|^{- \frac{1}{2} } \rp 
\end{align*}
as $w \to \t + i \infty$. 
The first term on the second line comes from $[P(\frac{A^{-1}}{2 \pi i} \bm{\del_z}) (e^{2 \pi i B \lp \bm{n},\bm{z} \rp})]_{\bm{z} =  \bm{0}} = P(\bm{n})$. The remaining terms come from the action of $\bm{\del_z}$ on the factors $\frac{\bm{z}}{\tau-w}$ appearing in the differential operators and on the factor $E_{Q,M}(- i \sqrt{i (w-\t)}(\bm{n} + \frac{\bm{z}}{\t-w}))$. To prove the bound $O(|w-\t|^{- \frac{1}{2} } )$, note that these terms all appear with a positive power of $(i (w-\t))^{-\frac{1}{2}}$ multiplying a term bounded by ordinary theta functions (possibly with polynomial insertions) in $\t$ or $w$. 
This is thanks to Lemma \ref{lem:generalized_error_derivative} and the fact that generalized error functions are bounded by one if they have real parameters. Finally, taking the limit $w \to \t + i \infty$ using Lemma \ref{la:generalized_error_asymptotic_form} proves the lemma statement.
\end{proof}

We finish our discussion of the raising operator $\CD_\a$ with a small lemma that simplifies the objects in Lemma \ref{la:derivativeLimit} when the relevant polynomial $P$ is harmonic.

\begin{lemma}\label{lem:HarmonicDerivative}
Let $P(\bm{x})\in \IC[x_1,\dots, x_n]$ be a polynomial that is harmonic with respect to the metric induced by the quadratic form $Q$, i.e., $\bm{\del_x}^T A^{-1} \bm{\del_x} (P (\bm{x})) = 0$. Then we have
\begin{equation*}\label{eq:diff_op_harmonic}
\left[P\lp \frac{A^{-1} \bm{\CD} }{2 \pi i} \rp \lp f(\bm{z}, \bm{\zz})\rp\right]_{\bm{z} = \bm{\zz} = \bm{0}}
=
\left[P\lp \frac{A^{-1} \bm{\del_z} }{2 \pi i} \rp \lp f(\bm{z}, \bm{\zz})\rp\right]_{\bm{z} = \bm{\zz} = \bm{0}} .
\end{equation*}
\end{lemma}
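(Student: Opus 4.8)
The plan is to expand the operator $P(A^{-1}\bm{\CD}/2\pi i)$ and show that every term in which the extra piece $\frac{A^{-1}}{2\pi i}\cdot\frac{2\pi i}{\t-w}A(\bm z-\bm\zz) = \frac{\bm z-\bm\zz}{\t-w}$ actually contributes vanishes after setting $\bm z = \bm\zz = \bm 0$. Write $\bm{\CD} = \bm{\del_z} + \frac{2\pi i}{\t-w}A\bm u$ with $\bm u := \bm z-\bm\zz$, so that $\frac{A^{-1}\bm{\CD}}{2\pi i} = \frac{A^{-1}\bm{\del_z}}{2\pi i} + \frac{\bm u}{\t-w}$. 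Since $P$ is homogeneous, $P(A^{-1}\bm{\CD}/2\pi i)$ is a polynomial in the $n$ commuting-up-to-lower-order operators $\CD_\a$; the key point, already recorded in Proposition \ref{prop:ModularlyCovariant}, is that $[\CD_\a,\CD_\b]=0$, so these operators genuinely commute and $P(A^{-1}\bm{\CD}/2\pi i)$ is unambiguous and may be manipulated as an ordinary polynomial expression in them.

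First I would set up the bookkeeping: expand $P\bigl(\tfrac{A^{-1}\bm{\del_z}}{2\pi i} + \tfrac{\bm u}{\t-w}\bigr)$ as a sum of monomials, each of which is (a scalar times) a product of some number $j\ge 0$ of factors of components of $\tfrac{\bm u}{\t-w}$ and a differential operator of the form $(\text{components of } \tfrac{A^{-1}\bm{\del_z}}{2\pi i})$ of degree $d-j$. The $j=0$ term is exactly $P(A^{-1}\bm{\del_z}/2\pi i)$, the desired right-hand side. For $j\ge 1$, each such monomial carries a factor $\prod u_{i_1}\cdots u_{i_j}$ out front. The factors of $\bm u = \bm z-\bm\zz$ vanish at $\bm z=\bm\zz=\bm 0$ unless they are differentiated; a surviving term therefore requires that enough of the remaining $\bm{\del_z}$'s (there are $d-j$ of them) hit the $u$'s. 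But differentiating $\prod u_{i_1}\cdots u_{i_j}$ by $d-j$ derivatives can only produce a nonzero constant contribution when $d-j \ge j$, i.e. $j\le d/2$, and then one is left, after pairing $j$ of the derivatives with the $j$ factors of $u$, with a residual operator built from the remaining $d-2j$ derivatives acting on $\wh\Psi_{\bm\mu}(\bm z,\bm\zz;\t,w)$ evaluated at zero, \emph{contracted} via the inverse metric $A^{-1}$ in exactly the combination $\bm{\del_z}^T A^{-1}\bm{\del_z}$ applied $j$ times — because each pairing of a $u$-component with a $\del_z$-component comes from the quadratic form $\tfrac{A^{-1}}{2\pi i}$, producing one factor of $A^{-1}$ linking two $\del$'s.

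The main obstacle — and the heart of the argument — is to organize these cross-terms so that each surviving one is manifestly proportional to $(\bm{\del_x}^T A^{-1}\bm{\del_x})^j P$ evaluated appropriately, which is zero by harmonicity for every $j\ge 1$. Concretely, I would argue as follows: because $\bm z$ and $\bm\zz$ are independent variables and $\wh\Psi_{\bm\mu}$ depends on them, at the end we set both to zero, so any $u$-factor that is \emph{not} differentiated kills the term; hence only terms in which all $j$ of the $u$-factors are differentiated survive, and, crucially, the derivative hitting a $u_i = z_i - \zz_i$ must be a $\del_{z}$ (there are no $\del_\zz$'s in $\bm{\CD}$), giving $\del_{z_i} u_k = \delta_{ik}$. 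Each such contraction eats one derivative index from $P$ and one from the ``created'' $\bm u$, and the metric factor $A^{-1}$ sitting in front of $\bm{\CD}$ ties the two corresponding indices of $P$ together, precisely forming the Laplacian $\bm{\del}^T A^{-1}\bm{\del}$ acting on $P$ once per contracted pair. Iterating, the coefficient of every $j\ge 1$ monomial is a multiple of $(\bm{\del_x}^T A^{-1}\bm{\del_x})^j(P)=0$. The cleanest way to make this rigorous without index gymnastics is to introduce a formal bookkeeping variable: replace $\bm{\CD}$ by $\bm{\del_z} + s\,\tfrac{2\pi i}{\t-w}A\bm u$, expand $P$ as a polynomial in $s$, and show the $s$-derivative at $s=0$ of order $j$ of $[P(\cdots)(f)]_{\bm z=\bm\zz=\bm 0}$ equals $c_j\,[(\bm{\del_x}^T A^{-1}\bm{\del_x})^j P](\tfrac{A^{-1}\bm{\del_z}}{2\pi i})\,f\big|_0$ for an explicit nonzero constant $c_j$, which vanishes by hypothesis; setting $s=1$ then gives the claim. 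I expect the only genuinely delicate bookkeeping to be tracking that the pairing really yields the $A^{-1}$-Laplacian of $P$ and not some other quadratic contraction, but this follows because $\tfrac{A^{-1}\bm{\CD}}{2\pi i}$ is $A^{-1}$ applied to $\bm{\CD}$, so a component $(\tfrac{A^{-1}\bm{\CD}}{2\pi i})_a = \sum_b (A^{-1})_{ab}\bigl(\tfrac{\del_{z_b}}{2\pi i} + \tfrac{(A\bm u)_b}{\t-w}\bigr)$ and the contraction of a $\del_{z_b}$ with the $(A\bm u)_{b'}$ appearing in $(\tfrac{A^{-1}\bm{\CD}}{2\pi i})_{a'} = \sum_{b'}(A^{-1})_{a'b'}(\cdots)$ produces $\sum_{b}(A^{-1})_{ab}\,\del_{z_b}\bigl(\sum_{b'}(A^{-1})_{a'b'}(A\bm u)_{b'}\bigr) = \sum_b (A^{-1})_{ab}(A^{-1})_{a'b}$'s worth of structure, i.e. exactly $(A^{-1})_{aa'}$ up to the metric contraction defining $\bm{\del_x}^T A^{-1}\bm{\del_x}$.
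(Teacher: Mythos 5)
Your proposal is correct and follows essentially the same route as the paper's proof: expand $P$ applied to $\frac{A^{-1}\bm{\CD}}{2\pi i}=\frac{A^{-1}\bm{\del_z}}{2\pi i}+\frac{\bm z-\bm\zz}{\t-w}$, observe that after setting $\bm z=\bm\zz=\bm 0$ the only surviving cross-terms are those where a $\del_z$ from one factor hits a $(z-\zz)$ from another, and each such contraction produces $\sum_{\a_j,\a_k}(A^{-1})_{\a_j\a_k}P_{\a_1,\dots,\a_\ell}=0$ by harmonicity. The extra bookkeeping with the formal parameter $s$ and the iterated Laplacian is a harmless elaboration of the same one-contraction argument the paper gives.
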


\begin{proof} 
Without loss of generality we can assume the polynomial $P$ is homogenous and of the form $P(\bm{x}) = \sum_{\a_1, \ldots, \a_\ell \in \{1,\ldots,n\}} P_{\a_1, \dots, \a_\ell} x_{\a_1} \cdots x_{\a_\ell}$, where $P_{\a_1, \dots, \a_\ell}$ is symmetric in its indices.
Since $\lp A^{-1} \bm{\CD} \rp_\a =\sum_{\g=1}^n (A^{-1})_{\a \g} \del_{z_\g} + 2 \pi i  \frac{z_\a - \zz_\a}{\t - w}$, the difference between the two sides of the equation in the lemma statement consists of terms in which $\sum_{\g=1}^n (A^{-1})_{\a_j \g} \del_{z_\g}$ from a $\lp A^{-1} \bm{\CD} \rp_{\a_j}$ acts on a $(z_{\a_k} - \zz_{\a_k})$ from another $\lp A^{-1} \bm{\CD} \rp_{\a_k}$. All such terms feature a factor of the form
$\sum_{\a_j, \a_k\in \{1,\ldots,n\}} (A^{-1})_{\a_j, \a_k} P_{\a_1, \dots, \a_\ell}$ and these are all zero by the harmonicity of $P$.
\end{proof}

The relations in Proposition \ref{prop:ModularlyCovariant} are useful in understanding completions of false modular forms from a representation theoretic perspective, which we briefly elaborate upon in the remainder of this section. As usual we denote the Lie algebra of $2 \times 2$ traceless matrices by $\mathfrak{sl}_2(\mathbb{C}):=\mathrm{span}\{e,f,h\}$, where $e$, $f$, and $h$ form the standard Chevalley basis. Recall that \textit{irreducible} {\em weight} $\mathfrak{sl}_2(\mathbb{C})$-modules are highest weight modules, lowest weight modules, or Harish--Chandra modules from the so-called intermediate series. These modules have one-dimensional weight subspaces and can be constructed as follows. Consider $E_{r,\mu}:=\oplus_{j \in \mathbb{Z}} v_j$, where $r, \mu \in \mathbb{C}$. Then we have an action of $\mathfrak{sl}_2(\mathbb{C})$ defined via its generators
\begin{equation*}
e.v_j=-(\mu+j)v_{j-1}, \ \ h.v_j=(-2\mu-2j+r)v_j, \  f.v_j=(\mu+j-r)v_{j+1}.
\end{equation*}
This module is irreducible if and only if $\mu \notin \mathbb{Z}$ and $r - \mu \notin \mathbb{Z}$. Any irreducible  $\mathfrak{sl}_2(\mathbb{C})$-module is a subquotient of  $E_{r,\mu}$. 

Letting $\bm{z}=\bm{\zz}=\bm{0}$, it is implicit in Proposition \ref{prop:ModularlyCovariant} that the differential operators
\begin{equation*}
\partial_\tau+\frac{k}{\tau-w}, \ \  \ (\tau-w)^2 \partial_w,
\end{equation*}
map the space of bimodular forms of weight $(k,0)$ to the space of bimodular forms of weight $(k+2,0)$ and $(k-2,0)$, respectively. 
Let $\widehat{F}(w,\tau)$ be the completion of a false modular form of weight $k$ and consider the vector space $\mathcal{L}_{\widehat{F}}$ spanned by $\widehat{F}$ and 
applications of $L$ and $R$ on $\widehat{F}$. It is not difficult to see that 
\begin{equation*}
e \mapsto i L ,  \ \ \ f \mapsto i R_k, \ \ \ h \mapsto -(L \circ R_k-R_{k-2} \circ L)
\end{equation*}
defines an $\mathfrak{sl}_2(\mathbb{C})$-module structure on $\mathcal{L}_{\widehat{F}}$.  Next we present an interpretation of several simple examples using these modules. 

\noindent
{\bf Examples.}
\begin{enumerate}[leftmargin=*]
	\item An ordinary modular form $\widehat{F}(w,\tau)=F(\tau)$ of weight $k \in \frac12 \mathbb{N}$, depending only on $\tau$, is annihilated by $L$. The raising operator $R$ acts freely thus it generates an irreducible Verma module of highest weight $-k$ with $\widehat{F}$ as its highest weight vector. We can visualize $\widehat{F}$ and the corresponding module $\mathcal{L}_{\widehat{F}}$ as
	\begin{equation*}
		\underset{\widehat{F}}{\bullet} \rightleftarrows  \circ \rightleftarrows \circ \rightleftarrows \cdots
	\end{equation*}
	where right (resp. left) arrows indicates an action of the raising (resp. lowering) operators. This module is a submodule of $E_{-k,0}$.

	\item Let  $\widehat{F}(w,\tau)$ be the completion of a false modular form of weight $\frac12$ coming from a unary false theta function $F(\t)$ (so $\widehat{F}(\t,w)$ is an Eichler-type integral). We have $L(\widehat{F}(w,\tau))=(\tau-w)^{\frac32} \theta'(w)$, where $\theta'(w)$ is a weight $\frac32$ theta function. On the other hand,  $R_{-\frac{3}{2}}(L(\widehat{F}(w,\tau)))=0$. Therefore we obtain an indecomposable  $\mathfrak{sl}_2(\mathbb{C})$-module
	\begin{equation*}
	\cdots \rightleftarrows \circ \overset{L}{\longleftarrow} \underset{\widehat{F}}{\odot}  \rightleftarrows \circ  \rightleftarrows \cdots
	\end{equation*}
	It is easy to see that this module is isomorphic to $E_{-\frac32,-\frac12}$.

	\item We have (with $\widehat{E}_2(\tau,w)$ defined in \eqref{E+w}) that $L(\widehat{E}_2(\tau,w))$ is a constant annihilated by $R_0$ and $L$. This leads to the following diagram for $\mathcal{L}_{\widehat{E}_2}$
	\begin{equation*}
	\bullet \leftarrow \underset{\widehat{E}_2}{\odot} \rightleftarrows  \circ \rightleftarrows \circ \rightleftarrows \cdots 
	\end{equation*}
	This module is isomorphic to the submodule of $E_{0,1}$ spanned by $v_j$, $j \geq -1$.
\end{enumerate}

More complicated examples will be studied in great detail elsewhere.

\section{Characters of the $W^0(p)_{A_n}$-vertex algebras for $n=1$, $2$, and $3$}\label{sec:w_algebra_characters}

With the necessary groundwork at hand, our aim in the rest of the paper is to work out the details of the formalism above on a number of examples. For that purpose, we first turn to affine $W$-algebras. These are important examples of vertex algebras defined via quantum Hamiltonian reduction and are widely studied both in mathematics and physics. 
For certain special values of the central charge, any principal affine $W$-algebra of ADE type admits a remarkable vertex algebra extension denoted by $W^0(p)_{R}$,
where $R$ is a root lattice and $p\in\N_{\ge2}$ is a scaling parameter.
It was previously observed in \cite{BM2,CM2} that the character
\begin{equation*}
{\rm ch}\left[W^0(p)_{A_n}\right](\tau) :={\rm tr}_{W^0(p)_{A_n}} q^{L(0)-\frac{c}{24}},
\end{equation*}
where $L(0)$ is the degree operator and $c$ is the central charge, can be written as an alternating sum of derivatives of higher rank partial theta functions.
Creutzig and the third author introduced and studied a regularized version of the character, ${\rm ch}[W^0(p)_{A_n}]^{\bm{\varepsilon}}(\tau)$, depending on charge variables $\bm \varepsilon \in \mathbb{C}^n$. Although regularized characters nicely connect with quantum group invariants (see \cite{CM2} for preliminary results), they do not transform as modular (or Jacobi) forms.

Our goal in this section is to investigate the functions $\eta(\tau)^n {\rm ch}[W^0(p)_{A_n}](\tau)$ for $1 \leq n \leq 3$ using the results from Section \ref{sec:higher_depth_false_theta}. We first discuss how they can be expressed in terms  
of false theta functions with polynomial insertions. Then we give modular completions that organize into vector-valued bimodular forms transforming as in (\ref{2-modular}) and whose boundary values combine into the characters we study. The price we have to pay for this modularity is that the completed characters are no longer partition functions on a two-torus, since they also depend on a second modular variable $w \in \mathbb{H}$. It would be very interesting to provide a physical interpretation for these bimodular completions either through $2d$ conformal field theories, half-indices for $3d$ theories, or chiral sectors of $4d$ superconformal field theories, where false theta functions make their appearance.

Another interesting direction would be to generalize the discussion here to more general graded traces called one-point functions (on the torus). For a vertex operator algebra $V$ and a graded $V$-module $M$, these are defined through the ``insertion'' of a homogeneous element  $v \in V$ as 
\begin{equation*}
T_M(v,\tau):={\rm tr}_{M} o(v) q^{L(0)-\frac{c}{24}},
\end{equation*}
where $o(v):=v_{{\rm deg}(v)-1}$ is the degree-preserving operator acting on $M$. The special case $v={\bf 1}$, the ``vacuum" vector, yields the usual character.
Since degree zero operators $o(v)$ act as generalized differential operators on a $W^0(p)_{A_n}$-module, the modular properties of completed graded traces $\hat{T}_M(v,\tau,w)$ can be unravelled using Proposition \ref{prop:ModularlyCovariant} and Lemma \ref{la:derivativeLimit}. We will pursue this direction in our future work.

\subsection{Generalities on $W$-algebra characters of type $A_n$}\label{sec:generalities_W_algebra}

We start by introducing the notation needed to describe the details of the characters that we study in this section. We denote the simple roots of the root lattice $A_n$ by $\bm{\a_1},\ldots,\bm{\a_n}$ and work with coordinates with respect to this basis whenever we use explicit coordinates to represent vectors.
The standard inner product on $A_n$ is denoted by $B_n(\cdot,\cdot)$ (and the associated quadratic form by $Q_n(\cdot)$), which satisfies
\begin{equation*}
B_n(\bm{\a_j},\bm{\a_{k}}) = 2 \d_{j,k} - \d_{j,k+1} - \d_{j+1,k}
\where j,k \in \{1,\ldots, n\} .
\end{equation*}
We then let $\bm{\b_1}, \ldots, \bm{\b_n}$ denote the dual basis for $A_n^*$, for which ${B_n(\bm{\a_j}, \bm{\b_k} ) = \d_{j,k}}$, denote the Weyl vector by $\bm{\rho}$, and the associated Weyl group by $W$ (where $n$ is implicit in the last two cases).

The vacuum character ${\rm ch}[W^0(p)_{A_n}](\tau)$ can be expressed as a lattice sum over the rescaled lattice $\LL := \sqrt{p} A_n$, $p \geq n+1$. To specify it, we fix representatives for conjugacy classes in $\LL^* / \LL$ as
\begin{equation*}
\bm{\l} := \sqrt{p} \bm{\wh{\l}} + \bm{\bar{\l}},
\end{equation*}
where $\bm{\wh{\l}} \in \{ \bm{0}, \bm{\b_1}, \ldots, \bm{\b_n} \}$ is a representative from a conjugacy class in $A_n^*/A_n$ and 
\begin{equation*}
\bm{\bar{\l}} := \sum_{j=1}^n (1-s_j) \frac{\bm{\b_j}}{\sqrt{p}},
\quad \mbox{for }
s_j \in \{ 1,2,\ldots, p \} .
\end{equation*}
Then characters of atypical irreducible modules $W^0 (p, \bm{\b} )_{A_n}$ of $W^0 (p)_{A_n}$ parametrized by $\bm{\b} := \bm{\l} + \sqrt{p} \bm{\g}$ for some $\bm{\g} \in A_n$, are proposed to be (see \cite[formula (5.2)]{BM2}; see also \cite{Sugimoto,Sugimoto2} for a rigorous derivation of character formulas for $W(p)_{A_n}$)
\begin{equation*}
\ch \lb W^0 (p, \bm{\b} )_{A_n} \rb (\t) := \mathrm{CT}_{[\bm{\zeta}]} \left(  
\bm{\zeta}^{-\bm{\wh{\l}} - \bm{\g}}
\ch \lb W (p, \bm{\l} )_{A_n} \rb (\t, \bm{z} )
\right),
\end{equation*}
where 
\begin{equation*}
\ch \lb W (p, \bm{\l} )_{A_n} \rb (\t, \bm{z} ) := 
\frac{1}{\eta(\tau)^{n}} \sum_{w\in W} \sum_{\bm{\alpha}\in A_n} (-1)^{\ell(w)}
q^{p  \, Q_n \left( \bm{\a} +\bm{\rho} +\bm{\wh{\l}} + \frac{\bm{\bar{\l}}}{\sqrt{p}} -  \frac{\bm{\rho}}{p}  \right) }
\frac{\bm{\zeta}^{w\left( \bm{\a} + \bm{\rho} + \bm{\wh{\l}}  \right) -  \bm{\rho}  }}{\DD (\bm{\zeta})}.
\end{equation*}
Moreover the Weyl denominator is given by
\begin{equation*}
\Delta({\bm  \zeta}):=\prod_{\bm{\a} \in\DD_-}\lp 1-\bm{\zeta}^{\bm{\a}} \rp
\end{equation*}
with $\DD_-:=\{ - \sum_{j=k}^\ell \bm{\a_j} : 1\leq k\leq \ell \leq n\}$ denoting the set of negative roots. 
By expanding the Weyl denominator as
\begin{equation*}
\frac{1}{\Delta(\bm{\zeta})}=\sum_{\bm{\a}\in A_n} K_n(\bm{\a})  {\bm{\zeta}}^{-\bm{\a}},
\end{equation*}
where $K_n$ denotes the $A_n$-Kostant partition function,\footnote{Given a vector $\bm{\a} \in A_n$, the {\it Kostant partition function} $K_n(\bm{\a})$ is the number of ways to express $\bm{\a}$ as a linear combination of positive roots with nonnegative integer coefficients.} we obtain
\begin{multline*}
\ch \lb W^0 (p, \bm{\b} )_{A_n} \rb (\t) 
\\
= 
\frac{1}{\eta\left(\tau\right)^{n}} \sum_{w\in W} \sum_{\bm{\alpha}\in A_n} \left(-1\right)^{\ell\left(w\right)}
K_n\left(w\left( \bm{\a} + \bm{\rho} + \bm{\wh{\l}}  \right) -  \bm{\rho} -\bm{\wh{\l}} - \bm{\g}\right)
q^{p  \, Q_n \left( \bm{\a} +\bm{\rho} +\bm{\wh{\l}} + \frac{\bm{\bar{\l}}}{\sqrt{p}} -  \frac{\bm{\rho}}{p}  \right) }  .
\end{multline*}
The special case $\bm{ \b } = \bm{0}$ corresponds to the vacuum character of $W^0(p)_{A_n}$
\begin{equation}\label{eq:w_algebra_An_vacuum_character}
{\rm ch}\left[W^0(p)_{A_n}\right](\tau) = 
\frac{1}{\eta\left(\tau\right)^{n}} \sum_{w\in W} \sum_{\bm{\alpha}\in A_n} \left(-1\right)^{\ell\left(w\right)}
K_n\left(w\left( \bm{\a} + \bm{\rho}  \right) -  \bm{\rho} \right)
q^{p \, Q_n \left( \bm{\a} +\frac{p-1}{p} \bm{\rho}  \right) }  .
\end{equation}

\begin{rem}
\mbox{ } \nolisttopbreak
\begin{enumerate}[leftmargin=*, label=\rm(\arabic*)]
\item[(1)] The structure of irreducible $W^0(p)_{A_n}$-modules  $W^0(p, \bm{\beta} )_{A_n}$ was analyzed in \cite{CM2,Sugimoto,Sugimoto2}. 
\item[(2)] The characters of $W^0(p, \bm{\beta} )_{A_n}$ also appear as Fourier coefficients of Schur indices of the generalized Argyres--Douglas theories of type $(A_{n},A_{(n+1)(p-1)-1})$.
\item[(3)]  In a recent work of Park \cite{Park} (see also \cite{Chung}), a higher rank generalization of the $\hat{Z}$-invariants of $3$-manifolds  \cite{GPPV} was introduced for any root system $L$. These $q$-series $\hat{Z}_M^L(q)$, where $L$ is a rank $n$ root system of ADE type and $M$ is the $3$-manifold obtained from $0$-surgery on twist knots $K_p$, agree with the functions $\eta(\tau)^n {\rm ch}[W^0(p)_L](\t)$.
\end{enumerate}
\end{rem}

\subsection{Vacuum character for the $W$-algebras of type $A_1$ and $A_2$}\label{sec:vac_character_A1_A2}
The simplest case is that of the $A_1$ root system, for which the vacuum character of the corresponding $W$-algebra can be expressed in terms of unary false theta functions \cite{BKM1}
\begin{equation*}
{\rm ch}\left[W^0(p)_{A_1}\right](\tau) = \frac{1}{\eta (\t)} \sum_{n \in \IZ + \frac{p-1}{2p} } \sgn (n) q^{p n^2} .
\end{equation*}
The modular properties of these characters can be studied by realizing the unary false theta functions here in terms of holomorphic Eichler integrals of weight $\frac{3}{2}$ unary theta functions. These modular properties then can be used to find Rademacher-type exact formulae for Fourier coefficients of these characters following the road map of \cite{BN}.\footnote{This question is studied by G.~Cesana as part of her upcoming thesis.}

Moving on to the case of $A_2$, first note that the Kostant partition function here is given by $K_2 (n_1 \bm{\a_1} + n_2 \bm{\a_2}) = \min (n_1,n_2) + 1$ for  $n_1,n_2 \in \N_0$ (with $K_2$ zero otherwise). It is then convenient to note that for any $\bm{\a} \in A_2 + \bm{\rho}$ we have
\begin{equation*}
\sum_{w \in W} (-1)^{\ell (w)} K_2 \lp w ( \bm{\a} ) -  \bm{\rho} \rp
=
\frac{1}{4} \sum_{w \in W} (-1)^{\ell (w)} \mathbb{P}_2 ( w (\bm{\a}) ),
\end{equation*}
where
\begin{equation*}
\mathbb{P}_2 (n_1 \bm{\a_1} + n_2 \bm{\a_2} ) := \sgn ( n_1 ) \, \sgn ( n_2 )(n_1 + n_2) .
\end{equation*}
Plugging this expression in equation \eqref{eq:w_algebra_An_vacuum_character} yields
\begin{align*}
\mathrm{ch}\left[W^0(p)_{A_2} \right](\tau) 
&=\frac{1}{2\eta(\tau)^2} \hspace{-.05cm} \sum_{\bm n  \in \Z^2 +\frac{p-1}{p} (1,1)}  \hspace{-.05cm}
 \sgn\lp n_1 + \frac 1p \rp \sgn \lp n_2 + \frac 1p \rp \lp n_1 +n_2 + \frac 2p \rp q^{ p \lp n_1^2 +n_2^2 -n_1n_2\rp }
\\
&\qquad -
\frac{1}{\eta(\tau)^2} \sum_{\bm n  \in \Z^2 + \frac{p-1}{p} (1,0)}  \sgn \lp n_1 + \frac 1p \rp \sgn \lp n_2 \rp \lp n_1 +n_2 + \frac 1p \rp 
q^{ p \lp n_1^2 +n_2^2 -n_1n_2\rp } .
\end{align*}
Up to false theta functions with fewer factors of sign functions, we can drop the $+ \frac{1}{p}$ factors inside the terms $\sgn ( n_j + \frac{1}{p} )$. In fact, the contribution of these one-dimensional false theta functions turns out to be zero and 
$\mathrm{ch}\left[W^0(p)_{A_2} \right] (\t)$ can be written as a linear combination of the false modular forms $\psi_{p,\bm{s}}^{[A_n]} (\t)$ and $\varphi_{p,\bm{s}}^{[A_n]} (\t)$ defined in Section \ref{sec:An_false_modular_forms} (see Propositions \ref{prop:psiA2_properties} and \ref{prop:PhiA2} for explicit expressions giving these false modular forms).
Three of the authors analyzed the asymptotics of $\mathrm{ch}\left[W^0(p)_{A_2} \right] (\t)$ towards rational numbers and showed that it has depth two quantum modular properties \cite{BKM1}. With the methods developed here, one can determine the modularity properties on the whole upper half-plane, which we describe in detail for $\psi_{p,\bm{s}}^{[A_n]} (\t)$ and $\varphi_{p,\bm{s}}^{[A_n]} (\t)$ in Section \ref{sec:An_false_modular_forms}.

\subsection{Vacuum character for the $W$-algebra of type $A_3$}
Our first task in order to study the vacuum character ${\rm ch}\left[W^0(p)_{A_3}\right](\tau)$ is to rearrange the Kostant partition function into a more useful form for our purposes.

\begin{lem}\label{la:KostkaRewrite}
For any $\bm{\a} \in A_3 + \bm{\rho}$ we have
\begin{equation*}
\sum_{w \in W} (-1)^{\ell (w)} K_3 \lp w ( \bm{\a} ) -  \bm{\rho} \rp
=
\frac{1}{16} \sum_{w \in W} (-1)^{\ell (w)} \mathbb{P}_3 ( w (\bm{\a}) ),
\end{equation*}
where $\mathbb{P}_3 (n_1 \bm{\a_1} + n_2 \bm{\a_2} + n_3 \bm{\a_3} ) :=  
 \sgn ( n_1 ) \, \sgn ( n_2 ) \, \sgn ( n_3 )(4 n_1 n_2 n_3  - n_2 )$.
\end{lem}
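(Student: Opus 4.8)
The goal is to express the Weyl-symmetrized Kostant partition function $\sum_{w\in W}(-1)^{\ell(w)}K_3(w(\bm\a)-\bm\rho)$ as $\tfrac{1}{16}\sum_{w\in W}(-1)^{\ell(w)}\mathbb P_3(w(\bm\a))$, where $\mathbb P_3(n_1\bm{\a_1}+n_2\bm{\a_2}+n_3\bm{\a_3})=\sgn(n_1)\sgn(n_2)\sgn(n_3)(4n_1n_2n_3-n_2)$. The natural strategy is to compute $K_3$ explicitly on the positive Weyl cone and then check the identity by a direct (but organized) verification. First I would write down a closed formula for $K_3(m_1\bm{\a_1}+m_2\bm{\a_2}+m_3\bm{\a_3})$ for $m_1,m_2,m_3\in\N_0$: the positive roots of $A_3$ are $\bm{\a_1},\bm{\a_2},\bm{\a_3},\bm{\a_1}+\bm{\a_2},\bm{\a_2}+\bm{\a_3},\bm{\a_1}+\bm{\a_2}+\bm{\a_3}$, so $K_3(\bm m)$ counts nonnegative integer solutions of a small linear system; a standard computation (or the known formula, cf.\ the $A_2$ case $K_2(n_1\bm{\a_1}+n_2\bm{\a_2})=\min(n_1,n_2)+1$ quoted above) gives $K_3$ as a piecewise-polynomial function, quadratic in the $m_i$ on each chamber of a fan refining the positive cone.

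The key structural point is that both sides of the claimed identity are, as functions of $\bm\a\in A_3$, alternating under $W\cong S_4$ acting on the $\bm{\a_i}$-coordinates via the standard permutation-type action on $A_3$. Hence it suffices to verify equality of the two sides for $\bm\a$ ranging over a single (closed) fundamental chamber, i.e.\ for $\bm\a=\bm\b+\bm\rho$ with $\bm\b$ dominant, which is where the sign factors $\sgn(n_i)$ on the right all become $+1$ after applying appropriate Weyl elements, and where $K_3$ reduces to its polynomial expression on the dominant cone. Concretely I would: (i) fix the $W$-action on coordinates and the length function $(-1)^{\ell(w)}$; (ii) reduce the right-hand side $\tfrac1{16}\sum_w(-1)^{\ell(w)}\mathbb P_3(w(\bm\a))$ using that $\mathbb P_3$ is built from $\sgn$'s times a polynomial, so that on the dominant chamber many of the $4!=24$ Weyl translates collapse—this is precisely the analogue of the $A_2$ reduction $\sum_w(-1)^{\ell(w)}K_2(w(\bm\a)-\bm\rho)=\tfrac14\sum_w(-1)^{\ell(w)}\mathbb P_2(w(\bm\a))$ already used in Section~\ref{sec:vac_character_A1_A2}; (iii) on the left-hand side, replace $K_3(w(\bm\a)-\bm\rho)$ by its polynomial formula and carry out the alternating sum over $W$. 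Matching the two polynomial-in-$\bm\a$ expressions obtained in (ii) and (iii) on the dominant chamber, and invoking the $W$-alternation to extend to all $\bm\a$, completes the proof.

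\textbf{Main obstacle.} The real work is bookkeeping rather than conceptual: one must (a) get the correct closed form for the piecewise-polynomial $K_3$, being careful about the chamber structure inside the positive cone, and (b) correctly track the $24$ signed Weyl translates on both sides. The subtle part is that $K_3$ is only polynomial chamber-by-chamber, so one has to argue that after the alternating sum the chamber-dependence cancels, leaving the single polynomial $4n_1n_2n_3-n_2$ (up to the overall $\tfrac1{16}$ and the sign prefactors); equivalently, one checks that the ``wall-crossing'' corrections to $K_3$ are $W$-anti-invariant-cancelling. I would organize (b) by choosing coset representatives for $W$ modulo the stabilizer of the polynomial structure and by using that $\mathbb P_3$ and $K_3$ differ, on the dominant cone, only by terms supported on lower-dimensional faces (i.e.\ false/ordinary theta contributions with fewer sign factors), which drop out of the alternating sum exactly as the analogous $A_2$ corrections did. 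A useful sanity check throughout is to specialize to small explicit values of $(n_1,n_2,n_3)$ and to $p$-independent combinations, and to confirm consistency with the $A_2$ identity by restricting to a sub-root-system.
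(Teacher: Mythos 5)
Your plan follows essentially the same route as the paper's proof in Appendix~\ref{sec:Kostant_A3_lemma}: both exploit that the two sides are $W$-anti-invariant to reduce to the (interior of the) fundamental Weyl chamber, and then compare explicit piecewise-polynomial expressions there. The one substantive difference is how the left-hand side is evaluated: the paper does not compute $K_3$ itself at all, but instead quotes Theorem~6.1 of \cite{KP}, which directly gives the alternating sum $\sum_{w}(-1)^{\ell(w)}K_3(w(\bm\a)-\bm\rho)$ (a Kostka number $K_{\bm\lambda,\bm 0}$) as a piecewise polynomial on the dominant cone; your proposal to derive the closed form of $K_3$ from the six positive roots and then perform the $24$-term alternating sum by hand is feasible but considerably more laborious, and the citation is what keeps the paper's argument short. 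Two inaccuracies in your plan are worth correcting. First, $K_3$ is piecewise \emph{cubic}, not quadratic: its degree is $|\Delta_+|-\operatorname{rank}=6-3=3$, consistent with the cubic formulas from \cite{KP} and with the cubic $4n_1n_2n_3-n_2$ in $\mathbb{P}_3$. Second, the sign factors do not all collapse to $+1$ on the dominant chamber: only $\sgn(n_1),\sgn(n_2),\sgn(n_3)$, and $\sgn(n_1-n_2+n_3)$ are determined there, while $\sgn(n_1-n_2)$, $\sgn(n_1-n_3)$, $\sgn(n_2-n_3)$ are not, which is why the paper must further split the dominant chamber into four sub-regions before matching polynomials. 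You gesture at this under ``wall-crossing corrections,'' but this residual case analysis is the actual computational core of the proof and should be carried out explicitly rather than attributed to lower-dimensional face contributions dropping out of the alternating sum.
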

We give a proof in Appendix \ref{sec:Kostant_A3_lemma} based on the work of \cite{KP}, which yields a piecewise polynomial expression formula for the $\mathfrak{sl}_4$ Kostka numbers $K_{\bm{\lambda},\bm{0}}:=\mathrm{dim}(V(\bm{\lambda})_0)$, i.e., the dimension of the zero weight subspace of the representation $V(\bm{\lambda})$ based on the dominant weight $\bm{\lambda} \in A_3$.

With Lemma \ref{la:KostkaRewrite}, we can rewrite the vacuum character as
\begin{equation*}
\mathrm{ch} \left[W^0(p)_{A_3}\right](\tau) = 
\frac{1}{16 \eta\left(\tau\right)^{3}} 
\sum_{w\in W} (-1)^{\ell (w)} \sum_{\bm{\alpha}\in A_3 + w(\bm{\rho}) } 
\mathbb{P}_3  \lp \bm{\a} \rp
q^{p \, Q_3 \left( \bm{\a} -\frac{\bm{w(\rho)}}{p}   \right) }   
\end{equation*}
or more explicitly as
\begin{align*}
\mathrm{ch} \left[W^0(p)_{A_3}\right](\tau) = 
\frac{1}{8 \eta(\t)^{3}} 
&\sum_{\bm{r} \in \mathcal{S}} \varepsilon (\bm{r})
\sum_{\bm{n} \in \IZ^3 + \lp \frac{1}{2}, 0, \frac{1}{2} \rp } 
 \sgn ( n_1 ) \, \sgn ( n_2 ) \, \sgn ( n_3 )  n_2 (4n_1 n_3 - 1) 
\\ & \quad \times
q^{p \, \lp \left( n_1 - \frac{r_1}{p} \right)^2 + \left( n_2 - \frac{r_2}{p} \right)^2 + \left( n_3 - \frac{r_3}{p} \right)^2
- \left( n_1 - \frac{r_1}{p} \right) \left( n_2 - \frac{r_2}{p} \right) -\left( n_2 - \frac{r_2}{p} \right) \left( n_3 - \frac{r_3}{p} \right)  \rp }   ,
\end{align*}
where $\mathcal{S} := \mathcal{S}_1 \cup \mathcal{S}_2 \cup \mathcal{S}_{-1} \cup \mathcal{S}_{-2}$,
\begin{align*}
\mathcal{S}_1 &:= \left\{ \lp \frac{3}{2}, 2, \frac{3}{2} \rp, \lp \frac{1}{2}, 2, \frac{1}{2} \rp \right\},
\qquad  \ \ \ \,
\mathcal{S}_2 := \left\{ \lp \frac{3}{2}, 0, \frac{1}{2} \rp, \lp \frac{3}{2}, 1, -\frac{1}{2} \rp \right\},
\\
\mathcal{S}_{-1} &:= \left\{ \lp \frac{3}{2}, 1, \frac{3}{2} \rp, \lp \frac{1}{2}, -1, \frac{1}{2} \rp \right\},
\qquad
\mathcal{S}_{-2} := \left\{ \lp \frac{3}{2}, 0, -\frac{1}{2} \rp, \lp \frac{3}{2}, 2, \frac{1}{2} \rp \right\} ,
\end{align*}
and $\varepsilon (\bm{r}) := s$ if $\bm{r} \in \mathcal{S}_s$.

To go further, first note that we can change the factors $\sgn ( n_j )$ in $\mathrm{ch} \left[W^0(p)_{A_3}\right](\tau)$ for $j \in \{1,2,3\}$ to $\sgn ( n_j - \frac{r_j}{p} )$ up to corrections by false theta functions with fewer sign factors. In fact, for $p \geq 4$ it is easy to see that all such possible corrections vanish:
For $j \in \{1,3\}$ we have $\sgn ( n_j ) = \sgn ( n_j - \frac{r_j}{p} )$ if $n_j \in \IZ + \frac{1}{2}$ because $| \frac{r_j}{p} | < \frac{1}{2}$ for all $\bm{r} \in \mathcal{S}$, whereas for $j=2$ we have $  \sgn ( n_2 )n_2 = \sgn ( n_2 - \frac{r_2}{p} ) n_2$ if $n_2 \in \IZ$ because $| \frac{r_2}{p}| < 1$ for all $\bm{r} \in \mathcal{S}$ and the vanishing at $n_2 = 0$ is ensured by the explicit $n_2$ factor in front.
Therefore, for $p \geq 4$ we have
\begin{multline*}
\mathrm{ch} \left[W^0(p)_{A_3}\right](\tau) = 
\frac{1}{8  \eta(\t)^{3}} 
\sum_{\bm{r} \in \mathcal{S}} \varepsilon (\bm{r})
\sum_{\bm{n} \in \IZ^3 - \frac{\bm{r}}{p} + \lp \frac{1}{2}, 0, \frac{1}{2} \rp }
\sgn ( n_1 ) \, \sgn ( n_2 ) \, \sgn ( n_3 )    
\\  \times
\lp n_2 + \frac{r_2}{p} \rp
\lp 4 \lp n_1 + \frac{r_1}{p} \rp \lp n_3 + \frac{r_3}{p} \rp - 1 \rp 
q^{p \, \lp n_1^2 + n_2^2 + n_3^2 - n_1 n_2-n_2 n_3 \rp }   .
\end{multline*}

The modular properties of these functions can now be studied using the technology we are developing.
Recalling equations \eqref{eq:hat_psi_definition}, \eqref{eq:diff_operators}, and Lemma \ref{la:derivativeLimit}, we can define the modular completions of these vacuum characters by
\begin{align*}
&\wh{\mathrm{ch}} \left[W^0(p)_{A_3}\right](\tau, w) 
:= 
\frac{1}{8  \eta(\t)^{3}} 
\sum_{\bm{r} \in \mathcal{S}} \varepsilon (\bm{r})
\lb D\lp \frac{\bm{r}}{p} \rp \lp
\wh{\Psi}_{pQ_3, \lp \frac{1}{2}, 0, \frac{1}{2} \rp - \frac{\bm{r}}{p}, \bm{0}, (\bm{\b_1}, \bm{\b_2}, \bm{\b_3})}\rp ( \bm{z}, \bm{\zz}; \tau, w) 
\rb_{\bm{z} = \bm{\zz}= \bm{0}},
\end{align*}
where
\begin{equation*}
D\lp \bm{\nu} \rp := \lp \frac{\CD_1+ 2 \CD_2 + \CD_3}{4\pi i p} +\nu_2 \rp
\lp 4 \lp \frac{3\CD_1+ 2 \CD_2 + \CD_3}{8\pi i p}  + \nu_1 \rp 
\lp \frac{\CD_1+ 2 \CD_2 + 3\CD_3}{8\pi i p}   + \nu_3 \rp - 1 \rp   .
\end{equation*}
Thanks to Lemma \ref{la:derivativeLimit}, this object is a linear combination of bimodular forms with  weights $\frac{3}{2}, \frac{5}{2}, \frac{7}{2}$, and $\frac{9}{2}$ in $\tau$. Moreover, Lemma \ref{la:derivativeLimit} also implies that (for $p \geq 4$)
\begin{equation*}
\mathrm{ch} \left[W^0(p)_{A_3}\right](\tau) = 
\lim_{w \to \t + i \infty} \wh{\mathrm{ch}} \left[W^0(p)_{A_3}\right](\tau, w)  .
\end{equation*}

In the coming sections, we focus on the part $\sgn (n_1) \, \sgn (n_2) \, \sgn (n_3)(n_2 +\nu_2)$ of the insertion and give a more detailed discussion of it. In fact, looking at Lemma \ref{lem:konstant_A3_average2}, we can replace the $n_2$ factor with $\frac12(n_1+n_2+n_3)=\frac12B_3(\bm{\rho},\bm{\a})$ (up to corrections with fewer sign functions) and study false theta functions on $\sqrt{p} \, A_3$ with the insertion
\begin{equation*}
\sgn \lp  B_3 (\bm{\b_1}, \bm{\a}) \rp \sgn \lp  B_3 (\bm{\b_2}, \bm{\a}) \rp  
\sgn \lp  B_3 (\bm{\b_3}, \bm{\a}) \rp 
\end{equation*}
or 
\begin{equation*}
\sgn \lp  B_3 (\bm{\b_1}, \bm{\a}) \rp  \sgn \lp  B_3 (\bm{\b_2}, \bm{\a}) \rp  
\sgn \lp  B_3 (\bm{\b_3}, \bm{\a}) \rp B_3 (\bm{\rho}, \bm{\a}) .
\end{equation*}

\subsection{Examples of false modular forms on $\sqrt{p} A_n$}\label{sec:An_false_modular_forms}

In this section, we aim to investigate in greater detail a class of false modular forms associated with $A$-type root lattices that we encounter in our discussion above of vacuum characters for $W$-algebras of type $A_n$ for $n \in\{1,2,3\}$.
The main object under the microscope are the completed false theta functions on $A_n$ defined by
\begin{align}\label{eq:A_n_completed_false}
	\widehat{\psi}_{p,\bm{s}}^{[A_n]} (\bm{z}, \bm{\zz}; \t, w):= \ \ \ \sum_{\mathclap{\bm{n}\in A_n+\bm{\mu_n} (\bm{s})}}  \ E_{pQ_n, (\bm{\b_1},\dots,\bm{\b_n})}\left(-i\sqrt{i(w-\t)}\left(\bm{n}+\frac{\bm{z}-\bm{\zz}}{\t-w}\right)\right) q^{pQ_n(\bm{n})} e^{2\pi i p B_n(\bm{n},\bm{z})} ,
\end{align}
where the corresponding lattice conjugacy class (with respect to the quadratic form $p Q_n(\cdot)$) can be represented as
\begin{equation*}
	\bm{\mu_n} (\bm{s}) := \frac{1}{p} \sum_{m=1}^n s_m \bm{\b_m} ,\quad \mbox{where } s_j \in \mathbb{Z} \mbox{ for all } j \in [n].
\end{equation*}
The modular and elliptic transformation properties of $\widehat{\psi}_{p,\bm{s}}^{[A_n]} (\bm{z}, \bm{\zz}; \t, w)$ follow from Theorem \ref{co:modularity}.

Using this completed false theta function, we define two bimodular forms
\begin{equation*}
	\widehat{\psi}_{p,\bm{s}}^{[A_n]} (\t, w):=\widehat{\psi}_{p,\bm{s}}^{[A_n]} (\bm{0}, \bm{0}; \t, w) \andd \wh{\varphi}_{p,s}^{[A_n]}  (\t, w): = \frac{1}{2\pi i p} \bm{\rho}^T\bm{\partial_{\bm{z}}} \lp\widehat{\psi}_{p,\bm{s}}^{[A_n]}\rp (\bm{0}, \bm{0}; \t, w),
\end{equation*}
which have weights $\frac n2$ and $\frac n2+1$ in $\t$ under modular transformations in $(\t,w)$, simultaneously.\footnote{The latter result follows from Proposition \ref{prop:ModularlyCovariant} and Lemma \ref{la:derivativeLimit} while noting that $\bm\del_{\bm z}$ can be replaced with the modular covariant operator $\bm\CD=\bm{\del_z}+\frac{2\pi i}{\t-w}A(\bm z-\bm\zz)$ (as the contribution of the second term vanishes when $\bm z=\bm\zz=\bm0$).} As we demonstrate in detail by working out explicit Eichler integral expressions, their boundary values,
\begin{align*}
	\psi_{p,\bm{s}}^{[A_n]} (\t):=\lim_{w \to \t + i \infty} \wh{\psi}_{p,\bm{s}}^{[A_n]} (\t, w) \quad \mbox{and} \quad \varphi_{p,\bm{s}}^{[A_n]} (\t):=\lim_{w \to \t + i \infty} \wh{\varphi}_{p,\bm{s}}^{[A_n]} (\t, w) ,
\end{align*}
are depth $n$ false modular forms.\footnote{The Eichler integral representation immediately gives the derivative in $w$, which we can use to determine the depth by comparing to equation \eqref{eq:false_completion_derivative}. The Eichler integral representations we derive are also useful in giving an alternative understanding and derivation of the modular properties.}

Thanks to Lemma \ref{la:generalized_error_asymptotic_form}, up to corrections with fewer sign functions and lower-dimensional lattices, these false modular forms are equal to the false theta functions
\begin{equation*}
	\sum_{\bm{n}\in A_n+\bm{\mu_n} (\bm{s})} \prod_{m=1}^n \sgn (B_n (\bm{\b_m}, \bm{n})) \, q^{pQ_n(\bm{n})} \mbox{ and } \quad \ \ \sum_{\mathclap{\bm{n}\in A_n+\bm{\mu_n} (\bm{s})}} \quad\ \sgn (B_n (\bm{\rho}, \bm{n})) \prod_{m=1}^n \sgn (B_n (\bm{\b_m}, \bm{n})) \, q^{pQ_n(\bm{n})} .
\end{equation*}
Another goal of the upcoming discussion is to display these false modular forms quite explicitly including all the corrections.

\addtocontents{toc}{\SkipTocEntry}
\subsection*{$4.4.1.$ Depth one false modular forms over $\sqrt{p} A_1$}

We start with a quick review of the case $A_1$, which was already discussed in \cite{BN}, in order to set up notation and to establish results that is used in the cases $A_2$ and $A_3$.

The $A_1$-lattice can be explicitly realized as the lattice $\sqrt{2} \IZ$ with the usual Euclidean metric. With this realization we have $\a = 2 \b = \sqrt{2}$ and a complete set of conjugacy classes for $\mu_1(s) = \frac{s}{2p} \a$ is obtained by letting $s$ take integral values modulo $2p$. Then the completed false theta function in equation \eqref{eq:A_n_completed_false} more explicitly becomes
\begin{align}\label{eq:psi_A_1_explicit_form}
\widehat{\psi}_{p,s}^{[A_1]}( {z}, {\zz}; \t,w)=
\sum_{ {n}\in \Z+ {\frac{s}{2p}}} \erf\left(-i\sqrt{2\pi ip(w-\t)}\left( {n}+\frac{ {z}- {\zz}}{\t-w}\right)\right) q^{p {n}^2} e^{4\pi i p  {n} {z}}.
\end{align}
Now we use the Theorem \ref{co:modularity}, Lemma \ref{la:generalized_error_asymptotic_form}, and Lemma \ref{lem:EQMComplexRecursion} to study the properties of $\psi_{p,\bm{s}}^{[A_1]} (\t)$ and $\varphi_{p,\bm{s}}^{[A_1]} (\t)$. To state our results, it is useful to introduce the unary theta functions ($m,k \in \N$)
\begin{equation*}
\vartheta_{m, r} (z; \t) := \sum_{n \in \IZ + \frac{r}{2m}} q^{m n^2} e^{4 \pi i m n z} 
\andd
\vartheta_{m, r}^{[k]} (\t) := \lb \lp \frac{1}{4 \pi i m} \frac{\del}{\del z} \rp^k \vartheta_{m, r} (z; \t) \rb_{z=0}.
\end{equation*}

\begin{prop} \label{prop:psiA1_properties}
\mbox{ } \nolisttopbreak
\begin{enumerate}[label={\rm(\arabic*)},wide,labelindent=0pt]
\item The function $\widehat{\psi}_{p,s}^{[A_1]}$ has the modular transformation properties
\begin{align*}
\widehat{\psi}_{p,s}^{[A_1]}(\t+1,w+1)&=e^{\frac{\pi i s^2}{2p}} \widehat{\psi}_{p,s}^{[A_1]}(\t,w),
\\ 
\widehat{\psi}_{p,s}^{[A_1]}\left(-\frac{1}{\t}, -\frac{1}{w}\right) 
&=\chi_{\t,w} \frac{\sqrt{-i\t}}{\sqrt{2p}}  \sum_{r\pmod{2p}} e^{-\frac{\pi i sr}{p}} 
\widehat{\psi}_{p,r}^{[A_1]} (\t,w).
\end{align*}

\item We have
\begin{align*}
&\psi^{[A_1]}_{p, s} (\t) = \sum_{n\in \Z+\frac{s}{2p}}
\sgn(n) q^{pn^2}.
\end{align*}

\item \label{Psi1AsIntegral} We have the Eichler integral representation
\begin{align*}
\widehat{\psi}_{p,s}^{[A_1]}(\t,w)=\sqrt{2p} \int_{\t}^{w} \frac{\vartheta_{p,s}^{[1]}(w_1)}{\sqrt{i(w_1-\t)}} dw_1. 
\end{align*}
\end{enumerate}
In summary, $\psi^{[A_1]}_{p, s}$ forms a vector-valued false modular form of depth one and weight $\frac{1}{2}$.
\end{prop}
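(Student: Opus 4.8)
The plan is to specialize Theorem \ref{co:modularity} and the lemmas of Section \ref{sec:generalized_error} to the rank-one lattice $\LL = \sqrt{2p}\,\IZ$ with characteristic vector $\bm\ell = \bm 0$, single vector $M = (\bm{\b_1}) = \left(\tfrac{1}{\sqrt{2p}}\right)$, and quadratic form $pQ_1(n) = pn^2$, and then read off each of the three items. For part (1), the modular transformations are an immediate instance of Theorem \ref{co:modularity}(2): with $n=1$, $r=1$, $\bm\ell = \bm 0$, and $|\LL^*/\LL| = 2p$ one gets exactly the stated $T$- and $S$-transformations, noting that the $\t$-translation multiplier $e^{2\pi i Q(\bm\mu + \bm\ell/2)} = e^{2\pi i \cdot p \cdot (s/(2p))^2} = e^{\pi i s^2/(2p)}$ and that the sum over $\LL^*/\LL$ becomes the sum over $r \pmod{2p}$ with phase $e^{-2\pi i B(\bm\mu,\bm\nu)} = e^{-\pi i s r/p}$; I would also remark that $\erf$ is just $E_{Q,M}$ in one variable, since $\det(A)^{1/2} = \sqrt{2p}$ cancels against the normalization, matching \eqref{eq:psi_A_1_explicit_form}. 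For part (2), apply Theorem \ref{co:modularity}(3) / Lemma \ref{la:generalized_error_asymptotic_form}: since $r=1$ the only proper subset $S \subsetneq [1]$ is $S = \emptyset$, for which $E_{Q,M_\emptyset}(\bm 0) = E_{Q,\emptyset}(\bm 0) = 1$ but $\delta_{B(M_\emptyset,\bm n) = \bm 0}$ forces the constraint $B(\emptyset, \bm n) = \bm 0$ which is vacuous, so that correction term is $\sum_{n} \sgn(B(M,\bm n)) q^{pn^2}$ evaluated with the empty remaining sign product—wait, more carefully, for $S = \emptyset$ the summand is $E_{Q,\emptyset}(\bm 0)\,\delta_{\text{(vacuous)}}\,\sgn(B(M_{[1]},\bm n))$, which would double-count; the correct statement is that there is no genuine correction because the only term is $S = \emptyset$ giving back $\sgn(B(M,\bm n))$ already, hence $\psi^{[A_1]}_{p,s}(\t) = \sum_{n \in \IZ + s/(2p)} \sgn(n)\, q^{pn^2}$ with no extra pieces (the $\bm n$ with $B(M,\bm n) = 0$, i.e. $n = 0$, contributes zero to both sides).

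For part (3), the Eichler integral representation, I would invoke Lemma \ref{lem:EQMComplexRecursion} with $r=1$: the recursion gives
\[
E_{Q,M}\!\left(-i\sqrt{i(w-\t)}\,\bm n\right) = E_{Q,M}(\bm 0) + \int_\t^w B(\wh{\bm m}_{\bm 1}, \bm n)\,\frac{e^{\pi i B(\wh{\bm m}_{\bm 1},\bm n)^2 (w_1 - \t)}}{\sqrt{i(w_1 - \t)}}\, E_{Q,\emptyset}\!\left(-i\sqrt{i(w_1-\t)}\,\bm n\right) dw_1,
\]
and here $E_{Q,M}(\bm 0) = 0$ since $r=1$ is odd (Remark \ref{rem:E2_zero_value}), $E_{Q,\emptyset} = 1$, and $\wh{\bm m}_{\bm 1} = \bm{m_1}/\sqrt{2\,pQ_1(\bm{m_1})}$ so that $B(\wh{\bm m}_{\bm 1}, \bm n) = \sqrt{2p}\,n$ (in the $\sqrt{2p}\,\IZ$-realization, or $\sqrt{2p}\cdot n$ with $n \in \IZ + s/(2p)$). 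Plugging back into \eqref{eq:psi_A_1_explicit_form} with $z = \zz = 0$, the lattice sum over $n$ of $\sqrt{2p}\,n\, e^{\pi i (2p n^2)(w_1 - \t)} q^{p n^2}$ collapses: $q^{pn^2} e^{2\pi i p n^2 (w_1 - \t)} = e^{2\pi i p n^2 w_1}$, and $\sqrt{2p}\sum_n n\, e^{2\pi i p n^2 w_1} = \sqrt{2p}\cdot\frac{1}{4\pi i p}\del_z\big|_{z=0}\sum_n e^{2\pi i p n^2 w_1 + 4\pi i p n z} \cdot(4\pi i p) / (4\pi i p)$—more cleanly, this is exactly $\sqrt{2p}\,\vartheta^{[1]}_{p,s}(w_1)$ by the definitions of $\vartheta_{m,r}$ and $\vartheta^{[k]}_{m,r}$ with $m = p$, $r = s$. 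Interchanging sum and integral (justified by the normal convergence established in the proof of Theorem \ref{thm:wtPsi_modularity}) yields $\wh\psi^{[A_1]}_{p,s}(\t,w) = \sqrt{2p}\int_\t^w \vartheta^{[1]}_{p,s}(w_1)/\sqrt{i(w_1-\t)}\,dw_1$.

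Finally, for the summary claim that $\psi^{[A_1]}_{p,s}$ is a vector-valued false modular form of depth one and weight $\tfrac12$: the weight $\tfrac{n}{2} = \tfrac12$ and depth $r = 1$ follow from Theorem \ref{co:modularity}(3) since $\bm{\b_1} = \tfrac{1}{2p}\bm\alpha \notin \LL$ strictly, but $\sqrt{2p}\,\bm{\b_1} = \tfrac{1}{\sqrt{2p}}\cdot\sqrt{2p}$... I would instead note that one may rescale $M$ freely (as $E_{Q,M}$ depends on $M$ only through the spanned subspace and signs are scale-invariant), so WLOG take the vector in $M$ to be $\bm\alpha \in \LL$, putting us in the hypothesis of the last sentence of Theorem \ref{co:modularity}(3); alternatively, the Eichler integral form from part (3) directly exhibits $\del_w \wh\psi^{[A_1]}_{p,s}$ in the shape of \eqref{eq:false_completion_derivative} with a single weight-$\tfrac32$ modular form $\vartheta^{[1]}_{p,s} \in \MM_{3/2}$ and $r_j = -\tfrac12$, and with $\wh g_j$ the (depth-zero) constant, which by the recursive definition makes $\wh\psi^{[A_1]}_{p,s} \in \wh\FF^1_{1/2}$. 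The vector-valued structure is over the index $s \pmod{2p}$, the transformation being \eqref{2-modular} with $\rho$ the $2p$-dimensional Weil-type representation read off from part (1) and $\ell = 1$, $\chi_{\t,w}$ as defined before Theorem \ref{co:modularity}.

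The main obstacle I anticipate is purely bookkeeping rather than conceptual: getting all the normalization constants ($\det(A)^{1/2}$ versus the $\sqrt{2p}$ prefactors, the $\tfrac{1}{4\pi i p}$ in $\vartheta^{[1]}$, the precise realization $\sqrt{2}\,\IZ$ with $\alpha = 2\beta = \sqrt 2$ rescaled by $\sqrt p$) to line up exactly so that the collapsed lattice sum is literally $\sqrt{2p}\,\vartheta^{[1]}_{p,s}(w_1)$ and not off by a factor; and confirming that the "corrections with fewer sign functions" promised in general by Theorem \ref{co:modularity}(3) genuinely vanish here because the only candidate correction is supported at $n = 0$ where the summand vanishes. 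Neither is deep, but both need to be checked carefully against the explicit formula \eqref{eq:psi_A_1_explicit_form}.
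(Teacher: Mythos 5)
Your proposal is correct and follows exactly the route the paper intends for this proposition (which it states without a written-out proof, pointing instead to Theorem \ref{co:modularity}, Lemma \ref{la:generalized_error_asymptotic_form}, and Lemma \ref{lem:EQMComplexRecursion}): specialize the general modular transformations for part (1), the asymptotic lemma for part (2) — where, as you correctly resolve after some hesitation, the only candidate correction term carries the factor $E_{Q,M}(\bm 0)=0$ since $r=1$ is odd — and the complex recursion for the Eichler integral in part (3), with all normalizations ($\det(A)^{1/2}=\sqrt{2p}$, $B(\widehat{\bm m}_{\bm 1},\bm n)=\sqrt{2p}\,n$, and the $\frac{1}{4\pi i p}$ in $\vartheta^{[1]}_{p,s}$) lining up as you computed.
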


To discuss the properties of $\widehat{\varphi}_{p,s}^{[A_1]}$ next, we define the following regularized integral as in \cite{BKMN}
\begin{equation}\label{eq:regularized_integral}
\rint{\t}{w} \frac{f(w_1)}{(i(w_1-\t))^\frac32} dw_1 := \lim_{w_2 \to \t} 
\lp \int_{w_2}^w \frac{f(w_1)}{(i(w_1-\t))^\frac32}dw_1  + 2 i  \frac{f(\t)}{\sqrt{i(w_2-\t) }} \rp,
\end{equation}
where we assume both the integral and the one-sided limit are taken along the hyperbolic geodesic from $\t$ to $w$. Then a direct computation (together with Lemma \ref{la:derivativeLimit} and Lemma \ref{lem:EQMComplexRecursion}) yields the following result.

\begin{prop}\label{prop:PhiA1}
\mbox{ }  \nolisttopbreak
\begin{enumerate}[label={\rm(\arabic*)},wide,labelindent=0pt]
\item\label{PhiA1} The function $\widehat{\varphi}_{p,s}^{[A_1]}$ can be explicitly written as 
\begin{align*}
&\wh{\varphi}_{p,s}^{[A_1]} (\t, w) =\sum_{n\in \Z+\frac{s}{2p}} \erf\left(-i\sqrt{2\pi i p(w-\t)}n\right) n q^{pn^2}
+\frac{i}{\pi } \frac{1}{\sqrt{2 i p (w-\t)}} \sum_{n\in \Z+\frac{s}{2p}} e^{2\pi i p  n^2 w}. 
\end{align*}

\item The function $\widehat{\varphi}_{p,s}^{[A_1]}$ has  the modular transformation properties
\begin{align*}
\widehat{\varphi}_{p,s}^{[A_1]}(\t+1,w+1)&=e^{\frac{\pi i s^2}{2p}} \widehat{\varphi}_{p,s}^{[A_1]}(\t,w),
\\ 
\widehat{\varphi}_{p,s}^{[A_1]}\left(-\frac{1}{\t}, -\frac{1}{w}\right) 
&=\chi_{\t,w} \frac{\sqrt{-i}\t^\frac32}{\sqrt{2p}}  \sum_{r\pmod{2p}} e^{-\frac{\pi i sr}{p}} 
\widehat{\varphi}_{p,r,1}(\t,w).
\end{align*}

\item We have 
\begin{align*}
\varphi_{p,s}^{[A_1]} (\t)=\sum_{ n\in \Z+ \frac{s}{2p}} \sgn(n) n q^{p n^2}.
\end{align*}

\item We have the Eichler integral representation
\begin{align*}
\widehat{\varphi}_{p,s}^{[A_1]}(\t,w)&= \frac{1}{2\pi } \frac{1}{\sqrt{2p}}  \rint{\t}{w} \frac{\vartheta_{p,s}(w_1)}{(i(w_1-\t))^\frac32} dw_1.
\end{align*}
\end{enumerate}
In summary, $\varphi^{[A_1]}_{p, s}$ forms a vector-valued false modular form of depth one and weight $\frac{3}{2}$.
\end{prop}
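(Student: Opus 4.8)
The plan is to prove part (1) by a direct computation and then read parts (2), (3), (4) off from it, using the covariance of the operators from Section~\ref{sec:higher_depth_false_theta} and the recursion of Section~\ref{sec:generalized_error}. Since $\bm{\rho}=\bm{\b_1}$ in the $A_1$-realization, the definition gives $\widehat{\varphi}_{p,s}^{[A_1]}(\t,w)=\frac{1}{4\pi i p}\bigl[\del_z\widehat{\psi}_{p,s}^{[A_1]}(z,0;\t,w)\bigr]_{z=0}$, so first I would differentiate the series \eqref{eq:psi_A_1_explicit_form} termwise in $z$: the derivative either hits $e^{4\pi i p n z}$, contributing $\sum_n\erf(-i\sqrt{2\pi i p(w-\t)}\,n)\,n\,q^{pn^2}$, or hits the argument of $\erf$, where $\erf'(x)=\frac{2}{\sqrt\pi}e^{-x^2}$, the chain rule, and the rewriting $e^{2\pi i p(w-\t)n^2}q^{pn^2}=e^{2\pi i p n^2 w}$ should give, after collecting constants and tracking square-root branches, the second term $\frac{i}{\pi}(2ip(w-\t))^{-1/2}\sum_n e^{2\pi i p n^2 w}$. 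Part (3) would then follow immediately by letting $w\to\t+i\infty$: along $w-\t=it$ one has $\erf(-i\sqrt{2\pi i p(w-\t)}\,n)=\erf(\sqrt{2\pi p t}\,n)\to\sgn(n)$, so the first sum tends to $\sum_n\sgn(n)\,n\,q^{pn^2}$ while the second carries a factor $t^{-1/2}$ and vanishes (equivalently this is Lemma~\ref{la:derivativeLimit} for the harmonic polynomial $P(\bm x)=\bm{\rho}^T\bm x$, via Lemma~\ref{lem:HarmonicDerivative}).

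For part (2), I would note that $\bm{\del_z}$ and $\bm{\CD}=\bm{\del_z}+\frac{2\pi i}{\t-w}A(\bm z-\bm{\zz})$ agree at $\bm z=\bm{\zz}=\bm 0$, whence $\widehat{\varphi}_{p,s}^{[A_1]}=\bigl[\frac{1}{2\pi i p}\bm{\rho}^T\bm{\CD}\,\widehat{\psi}_{p,s}^{[A_1]}\bigr]_{\bm z=\bm{\zz}=\bm 0}$; since by Proposition~\ref{prop:ModularlyCovariant} the operator $\bm{\CD}$ raises the $\t$-weight by one, the $T$- and $S$-transformations of $\widehat{\psi}_{p,s}^{[A_1]}$ in Proposition~\ref{prop:psiA1_properties}(1) pass to $\widehat{\varphi}_{p,s}^{[A_1]}$ with the exponent of $\t$ raised from $\tfrac12$ to $\tfrac32$ (the Jacobi factor $e^{2\pi i Q(\bm z)/\t}$ and the rescaling $\bm z\mapsto\bm z/\t$ being trivial at $\bm z=\bm{\zz}=\bm 0$); alternatively one can transform the explicit formula of part (1) directly. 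This gives the weight-$\tfrac32$ statement of the summary, and two further checks place $\widehat{\varphi}_{p,s}^{[A_1]}$ in the framework of Section~\ref{sec:intro}: because $\erf$ is odd, $\erf(c\sqrt{w-\t})$ is $\sqrt{w-\t}$ times an entire function of $w-\t$, so $(i(w-\t))^{1/2}\widehat{\varphi}_{p,s}^{[A_1]}(\t,w)$ is holomorphic across $w=\t$ (which is what $\chi_{\t,w}$ records), and $\del_w$ of the Eichler integral in part (4) equals $\vartheta_{p,s}(w)(i(w-\t))^{-3/2}$, matching the shape \eqref{eq:false_completion_derivative} of a depth-one completion.

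Part (4) is the one place that will need genuine care. Starting from the right-hand side, I would use $(i(w_1-\t))^{-3/2}=2i\,\del_{w_1}(i(w_1-\t))^{-1/2}$ and integrate by parts in $w_1$; the divergent boundary term at $w_1=\t$ is exactly the one subtracted off in the definition \eqref{eq:regularized_integral} of the regularized integral, and the residual defect vanishes because $(\vartheta_{p,s}(w_2)-\vartheta_{p,s}(\t))(i(w_2-\t))^{-1/2}=O((w_2-\t)^{1/2})\to0$. What remains is $2i\,\vartheta_{p,s}(w)(i(w-\t))^{-1/2}-2i\int_\t^w\vartheta_{p,s}'(w_1)(i(w_1-\t))^{-1/2}dw_1$; writing $\vartheta_{p,s}'(w_1)=\sum_n 2\pi i p n^2 e^{2\pi i p n^2 w_1}$ and inserting the termwise identity $\int_\t^w e^{2\pi i p n^2 w_1}(i(w_1-\t))^{-1/2}dw_1=(\sqrt{2p}\,n)^{-1}\erf(-i\sqrt{2\pi i p(w-\t)}\,n)\,q^{pn^2}$---the one-vector case of Lemma~\ref{lem:EQMComplexRecursion}, equivalently the termwise form of Proposition~\ref{prop:psiA1_properties}(3)---everything should collapse, after dividing by $2\pi\sqrt{2p}$, onto the two terms of part (1). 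The main (and essentially only) obstacle throughout is the careful bookkeeping of the principal-value branch of $(i(w_1-\t))^{\pm1/2}$ along the hyperbolic geodesic from $\t$ to $w$, and with it the justification of the integration by parts against the regularized prescription and of the termwise manipulation of the locally uniformly convergent $w_1$-integral; everything else is formal given Sections~\ref{sec:generalized_error} and \ref{sec:higher_depth_false_theta}.
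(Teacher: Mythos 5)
Your proposal is correct and follows essentially the same route the paper intends: the paper's proof of Proposition \ref{prop:PhiA1} is just the remark that it is ``a direct computation together with Lemma \ref{la:derivativeLimit} and Lemma \ref{lem:EQMComplexRecursion}'', and you have filled in exactly that computation --- termwise $z$-differentiation of \eqref{eq:psi_A_1_explicit_form} for part (1), the covariance/limit machinery for parts (2)--(3), and the one-vector case of Lemma \ref{lem:EQMComplexRecursion} plus integration by parts against the regularization \eqref{eq:regularized_integral} for part (4). The constants and branch bookkeeping you flag all check out, so nothing is missing.
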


\addtocontents{toc}{\SkipTocEntry}
\subsection*{$4.4.2.$ Depth two false modular forms over $\sqrt{p} A_2$}
We next turn our attention to the case of $A_2$. First note that the fundamental weights $\{ \bm{\b_1}, \bm{\b_2} \}$ and the Weyl vector $\bm{\rho}$ look as follows in the $\bm{\a}$-basis (which we use to identify $A_2$ with $\IZ^2$)
\begin{equation*}
\bm{\b_1} = \frac{2}{3} \bm{\a_1} + \frac{1}{3} \bm{\a_2}, \quad
\bm{\b_2} = \frac{1}{3} \bm{\a_1} + \frac{2}{3} \bm{\a_2},
\andd
\bm{\rho} = \bm{\a_1} + \bm{\a_2} .
\end{equation*}
Also because $\bm{\b_1}+\bm{\b_2}, 3\bm{\b_1}, 3\bm{\b_2} \in A_2$, the conjugacy classes $\bm{\mu_2}(\bm{s}) =\frac{s_1}{p}\bm{\b_1}+\frac{s_2}{p}\bm{\b_2} = ( \frac{2s_1+s_2}{3p}, \frac{s_1+2s_2}{3p} )$ remain invariant under shifts of $\bm{s}$ as follows
\begin{equation*}
\bm{\mu_2}(\bm{s}) \equiv \bm{\mu_2}(s_1+3p,s_2), \  \bm{\mu_2}(s_1,s_2+3p), \  \bm{\mu_2}(s_1+p,s_2+p) \pmod{A_2} .
\end{equation*}
Therefore, we can represent all such conjugacy classes uniquely with a $\bm{\mu_2}(\bm{s})$ by restricting $\bm{s}$ to
\begin{equation*}
s_1\in \{0,1,\dots, 3p-1\}
\andd
s_2\in \{0,1,\dots, p-1\} .
\end{equation*}
Before we state our results, let us also recall that while applying Lemma \ref{lem:EQMComplexRecursion} we need to decompose $A_2 + \bm{\mu_2} (\bm{s})$ into components in the linear span of $\bm{\b_m}$ (for $m \in \{1,2\}$) and its orthogonal complement. The following lemma shows how this decomposition is done.
\begin{lem}
\label{lem:A_2_decomposition}
Decomposing $A_2 + \bm{\mu_2} (\bm{s})$ to parts in the linear span of $(\bm{\b_1},\bm{\a_2})$ or $(\bm{\b_2},\bm{\a_1})$ gives
\begin{align*}
A_2 + \bm{\mu_2} (\bm{s}) &=
\bigcup_{\d \in \{0,1\} } \left(
3\left(\Z+\frac{2s_1+s_2+3p\delta}{6p}\right) \bm{\b_1}+\left(\Z+\frac{s_2-p\delta}{2p}\right) \bm{\a_2}\right)
\\&=
\bigcup_{\d \in \{0,1\} } 
\left( 3\left(\Z+\frac{2s_2+s_1+3p\delta}{6p}\right) \bm{\b_2}+\left(\Z+\frac{s_1-p\delta}{2p}\right) \bm{\a_1}\right).
\end{align*}
On the linear span of ${\bm \a_1}$ or ${\bm \a_2}$, the quadratic form $Q_2$ reduces to $Q_1$.
\end{lem}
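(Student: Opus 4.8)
The plan is to reduce everything to an elementary change of basis on the rank-two lattice. The key structural fact I would use is that $\bm{\b_1}$ and $\bm{\a_2}$ span $A_2\otimes\IR$ and are orthogonal with respect to $B_2$, since $B_2(\bm{\b_1},\bm{\a_2})=\d_{1,2}=0$; this orthogonality is exactly what makes $(\bm{\b_1},\bm{\a_2})$ (and, symmetrically, $(\bm{\b_2},\bm{\a_1})$) the natural pair when Lemma~\ref{lem:EQMComplexRecursion} is later applied with $\bm{m_k}=\bm{\b_1}$ (resp.\ $\bm{\b_2}$). First I would record the transition formulas: from $\bm{\b_1}=\frac23\bm{\a_1}+\frac13\bm{\a_2}$ one gets $\bm{\a_1}=\frac32\bm{\b_1}-\frac12\bm{\a_2}$ and then $\bm{\b_2}=\frac13\bm{\a_1}+\frac23\bm{\a_2}=\frac12\bm{\b_1}+\frac12\bm{\a_2}$. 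Plugging these into $\bm{\mu_2}(\bm{s})=\frac{s_1}{p}\bm{\b_1}+\frac{s_2}{p}\bm{\b_2}$ gives $\bm{\mu_2}(\bm{s})=\frac{2s_1+s_2}{2p}\bm{\b_1}+\frac{s_2}{2p}\bm{\a_2}$.

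Next I would express $A_2=\Z\bm{\a_1}+\Z\bm{\a_2}$ in these coordinates: a general element $a\bm{\a_1}+b\bm{\a_2}$ with $a,b\in\Z$ equals $\frac{3a}{2}\bm{\b_1}+(b-\frac a2)\bm{\a_2}$. Separating the cases $a$ even and $a$ odd and using $\Z+\frac12=\Z-\frac12$, this becomes
\begin{equation*}
A_2 = \bigcup_{\d \in \{0,1\}} \left( 3\left(\Z + \tfrac{\d}{2}\right)\bm{\b_1} + \left(\Z - \tfrac{\d}{2}\right)\bm{\a_2} \right).
\end{equation*}
Adding $\bm{\mu_2}(\bm{s})$ and simplifying the two coefficients — the $\bm{\b_1}$-coefficient becomes $3\Z+\frac{3\d}{2}+\frac{2s_1+s_2}{2p}=3(\Z+\frac{2s_1+s_2+3p\d}{6p})$ and the $\bm{\a_2}$-coefficient becomes $\Z-\frac{\d}{2}+\frac{s_2}{2p}=\Z+\frac{s_2-p\d}{2p}$ — produces the first displayed identity of the lemma.

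For the second identity I would invoke the diagram automorphism of $A_2$ interchanging $1\leftrightarrow 2$: it swaps $\bm{\a_1}\leftrightarrow\bm{\a_2}$ and $\bm{\b_1}\leftrightarrow\bm{\b_2}$, preserves $B_2$, and sends $\bm{\mu_2}(s_1,s_2)$ to $\bm{\mu_2}(s_2,s_1)$, so it carries the first identity to the second. Finally, for the claim on the quadratic form, on the line $\IR\bm{\a_j}$ ($j\in\{1,2\}$) we have $Q_2(x\bm{\a_j})=\frac12 x^2 B_2(\bm{\a_j},\bm{\a_j})=x^2$, which is exactly $Q_1$ in the realization $A_1=\sqrt{2}\,\IZ$ used earlier, where $Q_1(x\sqrt{2})=x^2$; hence $Q_2$ restricted to $\IR\bm{\a_j}$ is isometric to $Q_1$. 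None of these steps is a genuine obstacle — the computation is routine bookkeeping — and the only point requiring a little care is matching the half-integer cosets (the $\d$-shifts) correctly, for which the identity $\Z+\frac12=\Z-\frac12$ is what makes the two ways of writing the $\d=1$ sheet agree.
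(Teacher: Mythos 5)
Your proof is correct and follows essentially the same route as the paper: an explicit change of basis from $(\bm{\a_1},\bm{\a_2})$ to $(\bm{\b_1},\bm{\a_2})$ followed by an even/odd split of the $\bm{\a_1}$-coefficient, which is exactly the paper's substitution $m_1\mapsto 2m_1+\d$, $m_2\mapsto m_2+m_1$. Your use of the diagram automorphism for the second identity and your explicit check that $Q_2$ restricts to $Q_1$ on $\IR\bm{\a_j}$ are minor refinements of what the paper leaves to symmetry and to the reader, respectively.
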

\begin{proof}
First note that an element of the conjugacy class $A_2 + \bm{\mu_2}(\bm{s})$ can be written uniquely as
\begin{align*}
\bm{n} = \left(m_1+\frac{2s_1+s_2}{3p}\right)\bm{\a_1}+\left(m_2+\frac{2s_2+s_1}{3p}\right)\bm{\a_2},
\quad \mbox{where }  m_1,m_2\in \mathbb{Z} .
\end{align*}
Since $\bm{\b_1}=\frac23\bm{\a_1}+\frac13\bm{\a_2}$, we have $\bm{\a_1}=\frac12 (3\bm{\b_1}-\bm{\a_2})$ and hence
\begin{align*}
\bm{n}= \left(\frac{3m_1}{2}+\frac{2s_1+s_2}{2p}\right) \bm{\b_1}+\left(m_2-\frac{m_1}{2}+\frac{s_2}{2p}\right)\bm{\a_2}.
\end{align*}
Making the change of variables $m_1\mapsto 2m_1+\delta$ with $m_1\in \Z$, $\delta\in\{0,1\}$ and then shifting $m_2\mapsto m_2 + m_1$ gives the first equality claim. The second one is proved in the same way.
\end{proof}
For example, we can use this result to decompose the theta function $\Theta^{[A_2]}_{\bm{\mu}}(\t) :=
\sum_{\bm{n}\in \IZ^2 +\bm{\mu}}  q^{pQ_2(\bm{n})}$ to unary theta functions as
\begin{align*}
\Theta^{[A_2]}_{\bm{\mu_2}(\bm{s})}(\t)
= \sum_{\d\in \{0,1\}} \vartheta_{3p, 2s_1+s_2+3p\d} (\t) \vartheta_{p,s_2-p\d}(\t)
=\sum_{\d\in \{0,1\}} \vartheta_{3p, 2s_2+s_1+3p\d} (\t) \vartheta_{p,s_1-p\d}(\t).
\end{align*}

We are now ready to state the properties of $\psi^{[A_2]}_{p, \bm{s}}$.
\begin{prop}
\label{prop:psiA2_properties}
Let $s_1\in \{0,1,\dots, 3p-1\}$ and $s_2\in \{0,1,\dots, p-1\}$.
\begin{enumerate}[label={\rm(\arabic*)},wide,labelindent=0pt]
\item The function $\wh{\psi}^{[A_2]}_{p,\bm{s}}$ has the modular transformation properties
\begin{align*}
\wh{\psi}^{[A_2]}_{p,\bm{s}}(\t+1,w+1) &= e^{ \frac{2\pi i}{3p}\lp s_1^2 +s_1s_2+s_2^2\rp} \wh{\psi}^{[A_2]}_{p,\bm{s}}(\t,w),\\
\wh{\psi}^{[A_2]}_{p,\bm{s}}\left(-\frac{1}{\t}, -\frac{1}{w}\right)
&=-\frac{i\t}{p \sqrt{3}} \sum_{r_1=0}^{3p-1} \sum_{r_2=0}^{p-1} e^{- \frac{2\pi i}{3p}
	\lp 2r_1s_1+r_2s_1+r_1s_2+2r_2s_2\rp} \wh{\psi}^{[A_2]}_{p,\bm{r}}(\t,w).
\end{align*}

\item We have 
\begin{align*}
&\psi^{[A_2]}_{p, \bm{s}} (\t) = \frac13 \delta_{\bm{s}=\bm 0} + \sum_{\bm{n}\in \IZ^2 +\bm{\mu_2}(\bm{s})}
\sgn(n_1)\sgn(n_2) q^{pQ_2(\bm{n})}.
\end{align*}

\item \label{prop_part:psiA2_Eichler} We have the following representation in terms of Eichler integrals
\begin{align*}
\wh{\psi}^{[A_2]}_{p,\bm{s}} (\t, w) =& \sqrt{6p} 
\int_{\t}^{w} \frac{h_{p,\bm{s}}(\t,w_1)}{\sqrt{i(w_1-\t)}} dw_1
+\frac13 \Theta^{[A_2]}_{\bm{\mu_2}(\bm{s})}(\t),
\end{align*}
\hspace{.75cm}where 
\begin{align*}
h_{p,\bm{s}}(\t,w):=\sum_{\d \in \{0,1\}} \left(\vartheta^{[1]}_{3p, 2s_1+s_2+3p\d} (w) \wh{\psi}^{[A_1]}_{p,s_2-p\d}(\t, w)
+\vartheta^{[1]}_{3p, 2s_2+s_1+3p\d} (w) \wh{\psi}^{[A_1]}_{p,s_1-p\d}(\t, w)\right).
\end{align*}

\end{enumerate}
In summary, $\psi^{[A_2]}_{p, \bm{s}}$ forms a vector-valued false modular form of depth two and weight one.
\end{prop}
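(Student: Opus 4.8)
The plan is to deduce parts (1) and (2) directly from Theorem~\ref{co:modularity} and Lemma~\ref{la:generalized_error_asymptotic_form} applied to the lattice $\LL=\sqrt{p}\,A_2$, and to establish the Eichler integral representation in (3) by unfolding the recursion of Lemma~\ref{lem:EQMComplexRecursion} along the orthogonal splitting provided by Lemma~\ref{lem:A_2_decomposition}.

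First I would record the data for $\LL=\sqrt{p}\,A_2$: since $A_2$ has discriminant $3$ one gets $|\LL^*/\LL|=3p^2$; the vector $\bm{\ell}=\bm{0}$ is characteristic because $A_2$ is even; $n=r=2$; and the inverse Cartan matrix gives $Q_2(\bm{\b_1})=Q_2(\bm{\b_2})=\frac13$, $B_2(\bm{\b_1},\bm{\b_2})=\frac13$. Because $E_{Q,M}$ and $\sgn(B(M,\cdot))$ depend on $M$ only through the oriented rays spanned by its columns (only $\sgn(B(\bm{m_j},\cdot))$ enters the defining integral \eqref{eq:generalized_error_definition}), $\wh{\psi}^{[A_2]}_{p,\bm{s}}$ equals the completion $\wh{\Psi}$ of \eqref{eq:hat_psi_definition} attached to $M=(3\sqrt{p}\,\bm{\b_1},3\sqrt{p}\,\bm{\b_2})$, whose columns do lie in $\LL$; this makes all three parts of Theorem~\ref{co:modularity} available. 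Part~(1) is then Theorem~\ref{co:modularity}(2) at $\bm{z}=\bm{\zz}=\bm{0}$: the $\t\mapsto\t+1$ factor equals $e^{2\pi i Q(\bm{\mu_2}(\bm{s}))}$ with $Q(\bm{\mu_2}(\bm{s}))=\frac{1}{3p}(s_1^2+s_1s_2+s_2^2)$, and in the inversion $(-i\t)^{n/2}=-i\t$, $\chi_{\t,w}^{r}=\chi_{\t,w}^{2}=1$, $e^{-\pi iQ(\bm{\ell})}=1$, $1/\sqrt{|\LL^*/\LL|}=1/(p\sqrt{3})$, and $B(\bm{\mu_2}(\bm{s}),\bm{\mu_2}(\bm{r}))=\frac{1}{3p}(2r_1s_1+r_2s_1+r_1s_2+2r_2s_2)$; finally, as observed before Lemma~\ref{lem:A_2_decomposition}, the ranges $s_1\in\{0,\dots,3p-1\}$, $s_2\in\{0,\dots,p-1\}$ give an irredundant set of representatives of $\LL^*/\LL$, so the sum over $\bm{\nu}$ becomes the stated sum over $\bm{r}$. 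For part~(2) I would apply Lemma~\ref{la:generalized_error_asymptotic_form} termwise to $\wh{\psi}^{[A_2]}_{p,\bm{s}}(\t,w)=\sum_{\bm{n}\in A_2+\bm{\mu_2}(\bm{s})}E_{pQ_2,(\bm{\b_1},\bm{\b_2})}(-i\sqrt{i(w-\t)}\bm{n})q^{pQ_2(\bm{n})}$, pushing the limit $w\to\t+i\infty$ through the sum using the uniform exponential bound from the proof of Theorem~\ref{thm:wtPsi_modularity}. Among the subsets $S\subseteq[2]$ the two singletons vanish since $E_{pQ_2,(\bm{\b_j})}(\bm{0})=0$ for odd rank (Remark~\ref{rem:E2_zero_value}); $S=\emptyset$ gives $\sgn(B_2(\bm{\b_1},\bm{n}))\sgn(B_2(\bm{\b_2},\bm{n}))=\sgn(n_1)\sgn(n_2)$; and $S=[2]$ gives $E_{pQ_2,(\bm{\b_1},\bm{\b_2})}(\bm{0})\,\d_{\bm{n}=\bm{0}}$ with $E_{pQ_2,(\bm{\b_1},\bm{\b_2})}(\bm{0})=\frac{2}{\pi}\arcsin(\frac{1}{2})=\frac13$ (Remark~\ref{rem:E2_zero_value}, using $B_2(\wh{\bm{\b}}_{\bm{1}},\wh{\bm{\b}}_{\bm{2}})=\frac12$). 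Summing over $\bm{n}\in\IZ^2+\bm{\mu_2}(\bm{s})$ and noting that $\bm{0}$ lies in this coset exactly when $\bm{s}=\bm{0}$ yields the claimed formula.

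The core of the proof is part~(3). I would substitute the recursion of Lemma~\ref{lem:EQMComplexRecursion} for $E_{pQ_2,(\bm{\b_1},\bm{\b_2})}(-i\sqrt{i(w-\t)}\bm{n})$ into \eqref{eq:A_n_completed_false} at $\bm{z}=\bm{\zz}=\bm{0}$ and interchange summation with integration (legitimate by the normal convergence on compacta established in the proof of Theorem~\ref{thm:wtPsi_modularity}). The term carrying $E_{pQ_2,(\bm{\b_1},\bm{\b_2})}(\bm{0})$ is at once $\frac13\,\Theta^{[A_2]}_{\bm{\mu_2}(\bm{s})}(\t)$. For the $k=1$ summand I would use Lemma~\ref{lem:A_2_decomposition} to write $\bm{n}=3a\bm{\b_1}+b\bm{\a_2}$ with $a\in\IZ+\frac{2s_1+s_2+3p\d}{6p}$, $b\in\IZ+\frac{s_2-p\d}{2p}$, $\d\in\{0,1\}$; since $\bm{\b_1}\perp\bm{\a_2}$, one computes $pQ_2(\bm{n})=3pa^2+pb^2$ and (with respect to $pQ_2$) $B(\wh{\bm{\b}}_{\bm{1}},\bm{n})=a\sqrt{6p}$, so the Gaussian factors combine as $e^{\pi i\,B(\wh{\bm{\b}}_{\bm{1}},\bm{n})^2(w_1-\t)}q^{3pa^2}=e^{6\pi i pa^2w_1}$ and the $a$-sum collapses to $\sqrt{6p}\,\vartheta^{[1]}_{3p,\,2s_1+s_2+3p\d}(w_1)$. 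The surviving error function $E_{pQ_2,(\bm{\b_2})_{\perp\{1\}}}$ has $(\bm{\b_2})_{\perp\{1\}}=\frac12\bm{\a_2}$, so by scaling invariance and the reduction $pQ_2|_{\langle\bm{\a_2}\rangle}=pQ_1$ (Lemma~\ref{lem:A_2_decomposition}) the one-variable identity $E_{pQ_1,\bm{\a_2}}(-i\sqrt{i(w_1-\t)}\,b\bm{\a_2})=\erf(-i\sqrt{2\pi ip(w_1-\t)}\,b)$ underlying \eqref{eq:psi_A_1_explicit_form} turns the $b$-sum into $\wh{\psi}^{[A_1]}_{p,\,s_2-p\d}(\t,w_1)$. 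The $k=2$ summand is handled symmetrically via the second decomposition in Lemma~\ref{lem:A_2_decomposition}, producing $\sqrt{6p}\,\vartheta^{[1]}_{3p,\,2s_2+s_1+3p\d}(w_1)\,\wh{\psi}^{[A_1]}_{p,\,s_1-p\d}(\t,w_1)$. Collecting the two contributions yields $\wh{\psi}^{[A_2]}_{p,\bm{s}}(\t,w)=\sqrt{6p}\int_{\t}^{w}(i(w_1-\t))^{-1/2}h_{p,\bm{s}}(\t,w_1)\,dw_1+\frac13\,\Theta^{[A_2]}_{\bm{\mu_2}(\bm{s})}(\t)$ with $h_{p,\bm{s}}$ as stated. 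The weight is $\frac{n}{2}=1$, and differentiating this in $w$ gives $\del_w\wh{\psi}^{[A_2]}_{p,\bm{s}}(\t,w)=\sqrt{6p}\,(i(w-\t))^{-1/2}h_{p,\bm{s}}(\t,w)$, exactly of the form \eqref{eq:false_completion_derivative} with each $r_j=-\frac12$, each $h_j$ a weight-$\frac32$ unary theta function, and each companion in $\wh{\FF}^{1}_{1/2}$ by Proposition~\ref{prop:psiA1_properties}; hence $\psi^{[A_2]}_{p,\bm{s}}$ is a vector-valued false modular form of depth two and weight one.

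The step I expect to be the main obstacle is the bookkeeping in part~(3): propagating the coset shifts $\frac{2s_1+s_2+3p\d}{6p}$ and $\frac{s_2-p\d}{2p}$ correctly through the change of variables in Lemma~\ref{lem:A_2_decomposition}, pinning down the normalization constants ($\sqrt{6p}$ and $\frac13$), and justifying the repeated use of the scaling invariance of $E_{Q,M}$. Everything else reduces to a mechanical specialization of Theorem~\ref{co:modularity}, Lemma~\ref{la:generalized_error_asymptotic_form}, and Lemma~\ref{lem:EQMComplexRecursion}.
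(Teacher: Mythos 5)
Your proposal is correct and follows essentially the same route as the paper: part (1) by specializing Theorem \ref{co:modularity} to $\LL=\sqrt{p}\,A_2$, part (2) via Lemma \ref{la:generalized_error_asymptotic_form} and Remark \ref{rem:E2_zero_value} with $E_{pQ_2,(\bm{\b_1},\bm{\b_2})}(\bm{0})=\frac13$, and part (3) by inserting the recursion of Lemma \ref{lem:EQMComplexRecursion} and collapsing the lattice sum with Lemma \ref{lem:A_2_decomposition}. The extra details you supply (rescaling $M$ so its columns lie in $\LL$, the interchange of sum and integral, and the $w$-derivative check of the depth) are all consistent with, and merely make explicit, what the paper leaves implicit.
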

\begin{proof}
\mbox{ } \nolisttopbreak
\begin{enumerate}[label={\rm(\arabic*)},wide,labelindent=0pt]
\item The modular transformations follow directly from Theorem \ref{co:modularity}.

\item The claim follows from Lemma \ref{la:generalized_error_asymptotic_form} and Remark \ref{rem:E2_zero_value}, which says that
\begin{equation*}
E_{pQ_{2}, (\bm{\b_1},\bm{\b_2})}\left(\bm{0}\right)=\frac{2}{\pi}\arcsin\left(
\frac{B_2(\bm{\b_1},\bm{\b_2})}{\sqrt{B_2(\bm{\b_1},\bm{\b_1}) \, B_2(\bm{\b_2},\bm{\b_2})}}
\right)=\frac13.
\end{equation*}

\item Specializing Lemma \ref{lem:EQMComplexRecursion} to our case we obtain
\begin{align}\nonumber
&E_{pQ_2, (\bm{\b_1},\bm{\b_2})}\left(-i\sqrt{i(w-\t)}\bm{x}\right)\\
\nonumber
&\hspace{.6cm}= \sqrt{\frac{3p}{2 }} B_2(\bm{\b_1},\bm{x}) \int_{\t}^{w}\frac{e^{\frac{3\pi ip}{2} B_{2}\left(\bm{\b_1},\bm{x}\right)^2 (w_1-\t)}}{\sqrt{i(w_1-\t)}} E_{pQ_{2}, \bm{\a_2}}\left(-i\sqrt{i(w_1-\t)}\bm{x}\right)   dw_1\\
\label{E2Q2}
&\hspace{1.2cm}+\sqrt{\frac{3p}{2}} B_2(\bm{\b_2},\bm{x}) \int_{\t}^{w}\frac{e^{\frac{3\pi ip}{2} B_{2}\left(\bm{\b_2},\bm{x}\right)^2 (w_1-\t)}}{\sqrt{i(w_1-\t)}} E_{pQ_{2}, \bm{\a_1}}\left(-i\sqrt{i(w_1-\t)}\bm{x}\right)   dw_1+\frac13. 
\end{align}
Now by Lemma \ref{lem:A_2_decomposition} one can write 
\begin{align}
&\sum_{\bm{n}\in \IZ^2+\bm{\mu_2}(\bm{s})} q^{pQ_2(\bm{n})} e^{\frac32 \pi i p  B_2(\bm{\b_1},\bm{n})^2 (w_1-\t)} E_{pQ_2,\bm{\a_2}} \left(-i\bm{n}\sqrt{i(w_1-\t)}\right) B_2(\bm{\b_1},\bm{n})
\notag
\\ & \qquad =
2\sum_{\delta\in\{0,1\}}  \sum_{m_1\in \Z + \frac{2s_1+s_2+3p\d}{6p}} m_1
e^{6\pi i p m_1^2 w_1} 
\sum_{m_2\in\Z +\frac{s_2-p\d}{2p}} \erf\left(-i\sqrt{2\pi i p(w_1-\t)} \, m_2\right) q^{pm_2^2}
\notag
\\& \qquad =
2\sum_{\d\in\{0,1\}} \vartheta^{[1]}_{3p, 2s_1+s_2+3p\d} (w_1) \wh{\psi}^{[A_1]}_{p,s_2-p\d}(\t, w_1).
\label{eq:A_2_erf_decomposed_to_A_1}
\end{align}
Using a similar computation for the second term in \eqref{E2Q2} gives the claim. \qedhere
\end{enumerate}
\end{proof}

Similarly, we give the properties of $\varphi^{[A_2]}_{p, \bm{s}}$ in the following proposition.

\begin{prop}\label{prop:PhiA2}
Let $s_1\in \{0,1,\dots, 3p-1\}$ and $s_2\in \{0,1,\dots, p-1\}$.
\begin{enumerate}[label={\rm(\arabic*)},wide,labelindent=0pt]
\item\label{PhiA2} The function $\wh{\varphi}_{p,\bm{s}}^{[A_2]}$ can be explicitly written as
\begin{align*}
\hspace{.65cm}&\wh{\varphi}_{p,\bm{s}}^{[A_2]} (\t, w) 
= 
\sum_{\bm{n}\in \Z^2+\bm{\mu_2}(\bm{s})} 
(n_1+n_2)E_{pQ_{2}, (\bm{\b_1},\bm{\b_2})}\left(-i\sqrt{i(w-\t)} \, \bm{n} \right)
q^{pQ_{2}(\bm{n})}
\\&  +\frac{\sqrt{3}i}{\pi \sqrt{2ip(w-\t)}}
\sum_{\d\in\{0,1\}} \left(\vartheta_{3p, 2s_1+s_2+3p\d} (w) \wh{\psi}^{[A_1]}_{p,s_2-p\d} (\t, w)
+\vartheta_{3p, 2s_2+s_1+3p\d} (w) \wh{\psi}^{[A_1]}_{p,s_1-p\d} (\t, w)\right) .\notag
\end{align*}

\item The function $\wh{\varphi}_{p,\bm{s}}^{[A_2]}$ has the modular transformation properties
\begin{align*}
\wh{\varphi}^{[A_2]}_{p,\bm{s}}(\t+1,w+1) &= e^{ \frac{2\pi i}{3p}\lp s_1^2 +s_1s_2+s_2^2\rp} \wh{\varphi}^{[A_2]}_{p,\bm{s}}(\t,w),
\\
\wh{\varphi}^{[A_2]}_{p,\bm{s}}\left(-\frac{1}{\t}, -\frac{1}{w}\right)
&=-\frac{i\t^2}{p \sqrt{3}} \sum_{r_1=0}^{3p-1} \sum_{r_2=0}^{p-1} e^{- \frac{2\pi i}{3p}
	\lp 2r_1s_1+r_2s_1+r_1s_2+2r_2s_2\rp} \wh{\varphi}^{[A_2]}_{p,r_1,r_2}(\t,w).
\end{align*}

\item We have
\begin{align*}
\varphi_{p,\bm{s}}^{[A_2]} (\t)= \sum_{\bm{n}\in \Z^2+\bm{\mu_2}(\bm{s})} \sgn(n_1)\sgn(n_2) (n_1+n_2) q^{pQ_2(\bm{n})}.
\end{align*}

\item\label{PhiA2_Eichler} We have the following representation in terms of Eichler integrals
\begin{align*}
\wh{\varphi}_{p,\bm{s}}^{[A_2]} (\t, w) =& 
\left(\frac13+\frac{\sqrt{3}}{\pi}\right)\sum_{\bm{n}\in \Z^2+\bm{\mu_2}(\bm{s})} (n_1+n_2) q^{pQ_2(\bm{n})} 
\\ & \qquad
+ \sqrt{6p}\int_{\t}^{w} \lp \frac{f_{p,\bm{s}}(\t,w_1)}{\sqrt{i(w_1-\t)}} 
+ \frac{1}{4 \pi p} \frac{g_{p,\bm{s}}(\t,w_1)}{(i(w_1-\t))^\frac32}
\rp dw_1 ,
\end{align*}
\hspace{.75cm}with 
\begin{align*}
f_{p,\bm{s}}(\t,w)&:= \sum_{\d\in \{0,1\}} \left(\vartheta_{3p, 2s_1+s_2+3p\d}^{[1]}(w)  \wh{\varphi}^{[A_1]}_{p,s_2-p\d} (\t,w)
+\vartheta_{3p, 2s_2+s_1+3p\d}^{[1]}(w)  \wh{\varphi}^{[A_1]}_{p,s_1-p\d} (\t,w)\right),
\\
g_{p,\bm{s}}(\t,w)&:=  \sum_{\d\in \{0,1\}} \left(
\vartheta_{3p, 2s_1+s_2+3p\d}(w)  \wh{\psi}^{[A_1]}_{p,s_2-p\d} (\t,w)
+\vartheta_{3p, 2s_2+s_1+3p\d}(w)  \wh{\psi}^{[A_1]}_{p,s_1-p\d} (\t,w) \right).
\end{align*}
\end{enumerate}
In summary, $\varphi^{[A_2]}_{p, \bm{s}}$ forms a vector-valued false modular form of depth two and weight two.
\end{prop}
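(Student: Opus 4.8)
I would prove the four parts in the order (2), (3), (1), (4); the first two are quick consequences of the general machinery of Section~\ref{sec:higher_depth_false_theta}, while (1) and (4) carry the explicit computations and the latter is the real work.

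For (2): at $\bm z=\bm\zz=\bm 0$ the second term of $\bm\CD=\bm\partial_{\bm z}+\frac{2\pi i}{\t-w}A(\bm z-\bm\zz)$ vanishes, so $\wh\varphi^{[A_2]}_{p,\bm s}(\t,w)=\frac{1}{2\pi i p}\bigl[\bm\rho^T\bm\CD(\wh\psi^{[A_2]}_{p,\bm s})\bigr]_{\bm z=\bm\zz=\bm 0}$. The transformation laws then follow from those of $\wh\psi^{[A_2]}_{p,\bm s}$ (Theorem~\ref{co:modularity}, equivalently Proposition~\ref{prop:psiA2_properties}(1)) together with the weight-raising covariance $\CD_\a(\bm g|_{k,Q}(\g,\bm m,\bm r))=\CD_\a(\bm g)|_{k+1,Q}(\g,\bm m,\bm r)$ of Proposition~\ref{prop:ModularlyCovariant}, which changes the weight from $1$ to $2$ and leaves the multiplier and the sum over $\LL^*/\LL$ untouched. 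For (3): apply Lemma~\ref{la:derivativeLimit} with $\bm\ell=\bm 0$ and the linear (hence $Q_2$-harmonic) polynomial $P(\bm x)=B_2(\bm\rho,\bm x)=x_1+x_2$; by Lemma~\ref{lem:HarmonicDerivative} we may replace $\bm\CD$ by $\bm\partial_{\bm z}$, so $\wh\varphi^{[A_2]}_{p,\bm s}$ equals the function treated there and $\varphi^{[A_2]}_{p,\bm s}=\lim_{w\to\t+i\infty}\wh\varphi^{[A_2]}_{p,\bm s}$ has the boundary value given by Lemma~\ref{la:derivativeLimit}. Its correction terms are indexed by $S\subsetneq\{1,2\}$ with $|S|=1$ and each carries the factor $E_{pQ_2,M_S}(\bm 0)$, which vanishes by Remark~\ref{rem:E2_zero_value} (odd number of vectors); this leaves exactly $\sum_{\bm n}\sgn(n_1)\sgn(n_2)(n_1+n_2)q^{pQ_2(\bm n)}$.

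For (1): differentiate the series \eqref{eq:A_n_completed_false} for $n=2$ term by term. Applying $\frac{1}{2\pi i p}\bm\rho^T\bm\partial_{\bm z}$ and setting $\bm z=\bm\zz=\bm 0$ yields two pieces. When $\bm\partial_{\bm z}$ hits $e^{2\pi i p B_2(\bm n,\bm z)}$ it produces the scalar $B_2(\bm\rho,\bm n)=n_1+n_2$, i.e.\ the main sum. When it hits the argument of $E_{pQ_2,(\bm\b_1,\bm\b_2)}$ the chain rule produces $\frac{-i\sqrt{i(w-\t)}}{\t-w}=-(i(w-\t))^{-1/2}$ times $\bigl(\bm\rho^T\bm\partial_{\bm z}E_{pQ_2,(\bm\b_1,\bm\b_2)}\bigr)$ evaluated at $-i\sqrt{i(w-\t)}\bm n$. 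Writing $\bm\rho=\bm\a_1+\bm\a_2=p(\bm w_1+\bm w_2)$, where $\bm w_k$ is dual to $\bm m_k=\bm\b_k$ with respect to $pB_2$ (so $\bm w_k=\tfrac1p\bm\a_k$), Lemma~\ref{lem:generalized_error_derivative} turns this into $\sqrt{6p}$ times a Gaussian in $B_2(\bm\b_k,\cdot)$ times $E_{pQ_2,\bm\a_j}$ for $\{j,k\}=\{1,2\}$. Decomposing the resulting lattice sum exactly as in \eqref{eq:A_2_erf_decomposed_to_A_1} via Lemma~\ref{lem:A_2_decomposition} (now \emph{without} the extra linear factor) identifies the $\bm\a_2$- (resp.\ $\bm\a_1$-)part with $\wh\psi^{[A_1]}_{p,s_2-p\d}$ (resp.\ $\wh\psi^{[A_1]}_{p,s_1-p\d}$) and the $\bm\b_1$- (resp.\ $\bm\b_2$-)part with the plain theta $\vartheta_{3p,2s_1+s_2+3p\d}(w)$ (resp.\ $\vartheta_{3p,2s_2+s_1+3p\d}(w)$); the accumulated numerical factor equals $\frac{\sqrt 3\,i}{\pi\sqrt{2ip(w-\t)}}$, giving the formula of part~(1).

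For (4): start from the formula of part~(1), and in the main sum apply the recursion of Lemma~\ref{lem:EQMComplexRecursion} to $E_{pQ_2,(\bm\b_1,\bm\b_2)}(-i\sqrt{i(w-\t)}\bm n)$, getting $E_{pQ_2,(\bm\b_1,\bm\b_2)}(\bm 0)=\tfrac13$ plus two $w_1$-integrals with integrands $\propto B_2(\bm\b_k,\bm n)\,e^{\frac{3\pi i p}{2}B_2(\bm\b_k,\bm n)^2(w_1-\t)}(i(w_1-\t))^{-1/2}E_{pQ_2,\bm\a_j}(-i\sqrt{i(w_1-\t)}\bm n)$. Carry the insertion $n_1+n_2$ through using, in the decomposition of Lemma~\ref{lem:A_2_decomposition} adapted to $(\bm\b_1,\bm\a_2)$, the orthogonal splitting $n_1+n_2=\tfrac32 B_2(\bm\b_1,\bm n)+\tfrac12 B_2(\bm\a_2,\bm n)$ (and the symmetric one with $1\leftrightarrow2$): the $B_2(\bm\a_2,\bm n)$-piece upgrades the $\bm\a_2$-direction $\wh\psi^{[A_1]}_{p,s_2-p\d}$ to $\wh\varphi^{[A_1]}_{p,s_2-p\d}$ (Proposition~\ref{prop:PhiA1}(1)) while $B_2(\bm\b_1,\bm n)$ feeds $\vartheta^{[1]}_{3p,\bullet}$, producing the $f_{p,\bm s}$-integral; the $B_2(\bm\b_1,\bm n)$-piece produces $B_2(\bm\b_1,\bm n)^2 e^{\frac{3\pi i p}{2}B_2(\bm\b_1,\bm n)^2(w_1-\t)}=\frac{1}{3\pi i p}\partial_{w_1}e^{\frac{3\pi i p}{2}B_2(\bm\b_1,\bm n)^2(w_1-\t)}$, and integration by parts in $w_1$ moves the derivative onto $(i(w_1-\t))^{-1/2}$ — creating the $(i(w_1-\t))^{-3/2}$ kernel and, after the singular boundary term at $w_1=\t$ is absorbed by the prescription \eqref{eq:regularized_integral}, the $g_{p,\bm s}$-integral with $\wh\psi^{[A_1]}_{p,\bullet}$ — and onto $\partial_{w_1}\wh\psi^{[A_1]}_{p,\bullet}(\t,w_1)=\sqrt{2p}\,\vartheta^{[1]}_{p,\bullet}(w_1)(i(w_1-\t))^{-1/2}$ (Proposition~\ref{prop:psiA1_properties}(3)). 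The leftover $(i(w_1-\t))^{-1}$-type contributions — one from this last integration by parts, one from the correction term of part~(1) rewritten via Proposition~\ref{prop:psiA1_properties}(3), and one from the $(w_1-\t)^{-1/2}$-tail of $\wh\varphi^{[A_1]}$ inside $f_{p,\bm s}$ — combine so that the full integrand of the displayed Eichler representation is integrable at $w_1=\t$, and the finite boundary evaluations there assemble into the closed-form contribution $\bigl(\tfrac13+\tfrac{\sqrt 3}{\pi}\bigr)\sum_{\bm n}(n_1+n_2)q^{pQ_2(\bm n)}$; tracking the remaining constants gives part~(4). Finally, reading $\partial_w\wh\varphi^{[A_2]}_{p,\bm s}$ off this representation (the derivative hits the outer $(i(w-\t))^{-1/2}$ and $(i(w-\t))^{-3/2}$ kernels, exposing a depth-one completed object $\wh\psi^{[A_1]}$ or $\wh\varphi^{[A_1]}$ times a weight-$\tfrac32$, resp.\ weight-$\tfrac52$, theta function) and comparing with \eqref{eq:false_completion_derivative} shows inductively that $\varphi^{[A_2]}_{p,\bm s}$ is a depth-two weight-two false modular form. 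The main obstacle is exactly this bookkeeping: tracking all multiplicative constants through the recursion, the decomposition and the distribution of the linear factor, and — above all — performing the integrations by parts that trade a ``second moment in the theta direction'' for the regularized $(i(w_1-\t))^{-3/2}$ integral, matching the $w_1=\t$ boundary terms against \eqref{eq:regularized_integral} and the closed-form theta term while verifying the cancellation of the stray $(i(w_1-\t))^{-1}$ pieces.
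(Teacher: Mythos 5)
Your proposal is correct and follows essentially the same route as the paper: part (1) via Lemma \ref{lem:generalized_error_derivative} and the decomposition of Lemma \ref{lem:A_2_decomposition}, parts (2)--(3) from the general covariance and limit statements, and part (4) by inserting the recursion \eqref{E2Q2}, splitting $\bm{\rho}=\tfrac32\bm{\b_1}+\tfrac12\bm{\a_2}$ (and its mirror), and integrating by parts exactly as in the paper. The only slight imprecision is your appeal to the regularized integral \eqref{eq:regularized_integral}: the final formula in part (4) uses an ordinary integral whose convergence at $w_1=\t$ comes from the cancellation of the $(w_1-\t)^{-1}$ pieces between the $f_{p,\bm{s}}$- and $g_{p,\bm{s}}$-terms via \eqref{eq:A_2_linear_theta_identity} (cf.\ Remark \ref{rem:A2_varphi_Eichler_convergence}), a mechanism you do correctly identify in the same breath.
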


\begin{rem}\label{rem:A2_varphi_Eichler_convergence}
Before we move on to the proof, let us remark on convergence of the integral given in part \ref{PhiA2_Eichler}.
 As can be seen from equation \eqref{eq:psi_A_1_explicit_form}
and part \ref{PhiA1} of Proposition \ref{prop:PhiA1} (or alternatively using their expression as Eichler integrals), as $w \to \t$ we have
\begin{align}\label{eq:psiA1_w_tau_limit}
\wh{\psi}^{[A_1]}_{p,s} (\t,w) &= - i \sqrt{8ip(w-\t)} \vartheta_{p,s}^{[1]}(\t) \lp 1 + O(w-\t) \rp,
\\ \label{eq:phiA1_w_tau_limit}
\wh{\varphi}^{[A_1]}_{p,s} (\t,w) &= \frac{i}{\pi } \frac{1}{\sqrt{2 i p (w-\t)}} \vartheta_{p,s} (\t) \lp 1 + O(w-\t) \rp .
\end{align}
The limiting behavior here, together with the fact that\footnote{
This identity follows from Lemma \ref{lem:A_2_decomposition} and writing $\bm{\rho} = \bm{\a_1} + \bm{\a_2}$ for the first expression and $\bm{\rho} = \bm{\b_1} + \bm{\b_2}$ for the second one.
}
\begin{align}
&\sum_{\bm{n}\in \Z^2+\bm{\mu_2}(\bm{s})} B_2(\bm{\rho},\bm{n}) \, q^{pQ_2(\bm{n})} 
= 2\sum_{\d\in\{0,1\}}  \left(  \vartheta_{3p, 2s_1+s_2+3p\d} (\t)
\vartheta^{[1]}_{p,s_2-p\d}(\t)
+\vartheta_{3p, 2s_2+s_1+3p\d} (\t)
\vartheta^{[1]}_{p,s_1-p\d}(\t) \right)
\notag \\
&\qquad \qquad \qquad 
= 2\sum_{\d\in\{0,1\}}  \left(  \vartheta^{[1]}_{3p, 2s_1+s_2+3p\d} (\t)
\vartheta_{p,s_2-p\d}(\t)
+\vartheta^{[1]}_{3p, 2s_2+s_1+3p\d} (\t)
\vartheta_{p,s_1-p\d}(\t) \right)
\label{eq:A_2_linear_theta_identity}
\end{align}
is crucial in rendering the integral in part \ref{PhiA2_Eichler} of Proposition \ref{prop:PhiA2} convergent near $w_1 = \t$ by canceling the part of the integrand behaving as $\frac{1}{w_1-\t}$.
\end{rem}

\begin{proof}
\mbox{ } \nolisttopbreak
\begin{enumerate}[label={\rm(\arabic*)},wide,labelindent=0pt]
\item We use Lemma \ref{lem:generalized_error_derivative} to compute
\begin{align*}
&\frac{1}{2\pi i p}\left[\bm{\rho}^T\bm{\partial_{z}}
\lp E_{pQ_{2}, (\bm{\b_1},\bm{\b_2})}\left(-i\left(\bm{n}+\frac{\bm{\zz}-\bm{z}}{w-\t}\right)\sqrt{i(w-\t)}\right)\rp\right]_{\bm{z}=\bm{\zz}=\bm{0}}
\\ &\hspace{3cm}
=\frac{\sqrt{3}i}{\pi \sqrt{2ip(w-\t)}}
\left( e^{\frac{3\pi i p}{2} B_{2}\left(\bm{\b_1},\bm{n}\right)^2(w-\t)} E_{pQ_{2}, \bm{\a_2}}\left(-i\sqrt{i(w-\t)}\bm{n}\right) \right.
\\& \hspace{7cm} \left.
+e^{\frac{3\pi i p}{2} B_{2}(\bm{\b_2},\bm{n})^2(w-\t)} E_{pQ_{2}, \bm{\a_1}}
\left(-i\sqrt{i(w-\t)}\bm{n}\right)
\right).
\end{align*}
Summing over $\bm{n}$ using Lemma \ref{lem:A_2_decomposition} (as in equation \eqref{eq:A_2_erf_decomposed_to_A_1})  gives the second line of the claimed identity. The contribution on the first line is obtained using $\frac{1}{2\pi i p} [ \bm{\rho}^T\bm{\partial_{z}}( e^{2\pi i pB_{2}(\bm{n},\bm{z})}) ]_{\bm{z} = \bm{0}}=B_{2}(\bm{n},\bm{\rho})$.

\item The modular transformations follow from Theorem \ref{co:modularity} and Lemma \ref{la:derivativeLimit}. 

\item Taking $w\to \t+i\infty$ in part \ref{PhiA2} using Lemma \ref{la:generalized_error_asymptotic_form} gives the claim.

\item  We insert equation \eqref{E2Q2} into the contribution on the first line of part \ref{PhiA2} to rewrite it as (while noting that $\bm{\rho}=\frac32 \bm{\b_1}+\frac12\bm{\a_2}=\frac12 \bm{\a_1}+\frac32\bm{\b_2}$)
\begin{align}
&\sum_{\bm{n}\in \Z^2+\bm{\mu_2}(\bm{s})} 
(n_1+n_2)E_{pQ_{2}, (\bm{\b_1},\bm{\b_2})}\left(-i\sqrt{i(w-\t)} \, \bm{n} \right)
q^{pQ_{2}(\bm{n})}
=
\frac13\sum_{\bm{n}\in \Z^2+\bm{\mu_2}(\bm{s})} B_2(\bm{\rho},\bm{n}) q^{pQ_2(\bm{n})}
\notag \\& \hspace{1.5cm}
+\sqrt{\frac{3p}{2}} \int_{\t}^{w} \sum_{\bm{n}\in \Z^2+\bm{\mu_2}(\bm{s})} e^{\frac{3\pi i p}{2} B_2(\bm{\b_1},\bm{n})^2w_1} E_{pQ_2,\bm{\a_2}}\left(-i\sqrt{i(w_1-\t)}\bm{n}\right) 
\notag \\&\hspace{3cm}
\times e^{\frac{\pi ip}{2} B_2(\bm{\a_2},\bm{n})^2\t} \left(\frac32 B_2(\bm{\b_1},\bm{n})^2+B_2(\bm{\b_1}, \bm{n})B_2\left(\frac{\bm{\a_2}}{2},\bm{n}\right)\right) \frac{dw_1}{\sqrt{i(w_1-\t)}}
\notag \\&\hspace{1.5cm}
+\sqrt{\frac{3p}{2}} \int_{\t}^{w} \sum_{\bm{n}\in \Z^2+\bm{\mu_2}(\bm{s})} e^{\frac{3\pi i p}{2} B_2(\bm{\b_2},\bm{n})^2w_1} E_{pQ_2,\bm{\a_1}}\left(-i\sqrt{i(w_1-\t)}\bm{n}\right) 
\notag \\&\hspace{3cm}
\times
e^{\frac{\pi ip}{2} B_2(\bm{\a_1},\bm{n})^2\t}  
\left(\frac32 B_2(\bm{\b_2},\bm{n})^2+B_2(\bm{\b_2}, \bm{n})B_2\left(\frac{\bm{\a_1}}{2},\bm{n}\right)\right) \frac{dw_1}{\sqrt{i(w_1-\t)}}.
\label{eq:auxiliary_eq_A_2_varphi}
\end{align}
Using Lemma \ref{lem:A_2_decomposition} and Proposition \ref{prop:PhiA1} \ref{PhiA1}, we can rewrite the second term here as 
\begin{align*}
\sqrt{\frac{3p}{2}} \int_{\t}^{w} &
\Bigg(
\frac{1}{\pi i p}\sum_{\d\in\{0,1\}} \frac{\partial}{\partial w_1}\left( \vartheta_{3p, 2s_1+s_2+3p\d}(w_1) \right) \wh{\psi}^{[A_1]}_{p,s_2-p\d} (\t,w_1)
\\
&\qquad +
2\sum_{\d\in\{0,1\}}  \vartheta_{3p, 2s_1+s_2+3p\d}^{[1]}(w_1)  \left(\wh{\varphi}^{[A_1]}_{p,s_2-p\d} (\t,w_1)
-\frac{i}{\pi}\frac{\vartheta_{p,s_2-p\d}(w_1)}{\sqrt{2 i p (w_1-\t)}}\right)  \Bigg)
\frac{dw_1}{\sqrt{i(w_1-\t)}} .
\end{align*}
Then we integrate by parts and use Proposition \ref{prop:psiA1_properties} \ref{Psi1AsIntegral}  and equation \eqref{eq:psiA1_w_tau_limit} to write
\begin{align*}
&\int_{\t}^{w}  \frac{\partial}{\partial w_1}\left( \vartheta_{3p, 2s_1+s_2+3p\d}(w_1) \right) \wh{\psi}^{[A_1]}_{p,s_2-p\d} (\t,w_1)
\frac{dw_1}{\sqrt{i(w_1-\t)}}
\\&\qquad \qquad  =
\frac{\vartheta_{3p, 2s_1+s_2+3p\d}(w)  \wh{\psi}^{[A_1]}_{p,s_2-p\d} (\t,w)}{\sqrt{i(w-\t)}} 
+i \sqrt{8p}\, \vartheta_{3p, 2s_1+s_2+3p\d}(\t)   
\vartheta_{p,s_2-p\d}^{[1]}(\t)
\\&\qquad \qquad \qquad +i
\int_{\t}^{w}   \vartheta_{3p, 2s_1+s_2+3p\d}(w_1) 
\left( \sqrt{2p}
\frac{\vartheta_{p,s_2-p\d}^{[1]}(w_1)}{w_1-\t} 
+\frac{1}2\frac{\wh{\psi}^{[A_1]}_{p,s_2-p\d} (\t,w_1)}{(i(w_1-\t))^\frac32} \right) dw_1.
\end{align*}
Note that the limiting values in equations \eqref{eq:psiA1_w_tau_limit} and \eqref{eq:phiA1_w_tau_limit} are crucial in ensuring that both of the integrals above are convergent near $w_1 = \t$. 
The third term on the right-hand side of equation \eqref{eq:auxiliary_eq_A_2_varphi} can be treated similarly (using the symmetry under the exchange of subscripts $1$ and $2$). Putting everything together while using equation \eqref{eq:A_2_linear_theta_identity} gives the result. \qedhere
\end{enumerate}
\end{proof}

\begin{rem}\label{rem:A2_limiting_value}
Using the Eichler integral representations in Proposition \ref{prop:psiA2_properties} and \ref{prop:PhiA2} together with Remark \ref{rem:A2_varphi_Eichler_convergence}, as $w \to \t$ we find that
\begin{align*}
\wh{\psi}^{[A_2]}_{p,\bm{s}} (\t, w) &=
\frac13 \Theta^{[A_2]}_{\bm{\mu_2}(\bm{s})}(\t) 
+ O(w-\t) ,
\\
\wh{\varphi}_{p,\bm{s}}^{[A_2]} (\t, w) &=
\left(\frac13+\frac{\sqrt{3}}{\pi}\right)\sum_{\bm{n}\in \Z^2+\bm{\mu_2}(\bm{s})} (n_1+n_2) q^{pQ_2(\bm{n})} 
+ O(w-\t)  .
\end{align*}
\end{rem}

\begin{rem}
Recall that irreducible modules of the lattice vertex algebra $V_{\sqrt{p}A_2}$  are 
parametrized by the cosets $\Lambda^*/\Lambda$ and the associated $\SL_2(\mathbb{Z})$-module is determined by  the matrices $S$ and $T$

\begin{align*}
S_{\bm{\mu_2}(\bm{r}),\bm{\mu_2}(\bm{s})}
=\frac{1}{\sqrt{3p}} e^{\frac{2 \pi i}{3p} \left(2 r_1 s_1+r_2 s_1+r_1 s_2+2r_2 s_2\right)}
\andd
T_{\bm{\mu_2}(\bm{r}),\bm{\mu_2}(\bm{r})}=e^{2 \pi i \left(\frac{1}{3p}\left(2 r_1^2+2r_1 r_2+2r_2^2\right) - \frac{1}{12} \right)} .
\end{align*}
By Proposition \ref{prop:psiA2_properties}, we see that the $|\Lambda^*/\Lambda|$-dimensional space spanned by  
the completions $\frac{\wh{\psi}^{[A_2]}_{p,\bm{s}}(\t,w)}{\eta(\tau)^2}$ transforms with the same matrices $S$ and $T$. Similarly, from Proposition \ref{prop:PhiA2}, we get that  $\frac{\wh{\varphi}^{[A_2]}_{p,\bm{s}}(\t,w) }{\eta(\tau)^2}$ transforms under $\SL_2(\mathbb{Z})$ the same way but with weight one. In conclusion, from the perspective of vertex 
algebras, completed objects coming from $W^0(p)_{A_2}$-characters recover the modular data of the lattice vertex algebra of $\sqrt{p}A_2$.
\end{rem}

\addtocontents{toc}{\SkipTocEntry}
\subsection*{$4.4.3.$ Depth three false modular forms over $\sqrt{p} A_3$}
We finally study the case of $A_3$. First, we write the fundamental weights $\{ \bm{\b_1}, \bm{\b_2}, \bm{\b_3} \}$ and the Weyl vector $\bm{\rho}$ in the $\bm{\a}$-basis (which we use to identify $A_3$ with $\IZ^3$)
\begin{align*}
\bm{\b_1} &= \frac{1}{4} \lp 3 \bm{\a_1} +2\bm{\a_2} + \bm{\a_3} \rp, \quad  
\bm{\b_2} = \frac{1}{2} \lp \bm{\a_1} +2\bm{\a_2} + \bm{\a_3} \rp, \quad  
\bm{\b_3} = \frac{1}{4} \lp \bm{\a_1} +2\bm{\a_2} + 3 \bm{\a_3} \rp,\\
\bm{\rho} &= \frac{3}{2} \bm{\a_1} +2 \bm{\a_2} + \frac{3}{2} \bm{\a_3} .
\end{align*}
With respect to the quadratic form $p Q_3(\cdot)$, the conjugacy classes for the lattice $A_3$ are spanned by vectors of the form
\begin{equation*}
\bm{\mu_3} (\bm{s}) := \frac{s_1}{p} \bm{\b_1} + \frac{s_2}{p} \bm{\b_2}+ \frac{s_3}{p} \bm{\b_3}
= \frac{1}{4p} \lp 3s_1+2s_2+s_3, 2s_1+4s_2+2s_3, s_1 + 2s_2+3s_3 \rp,
\end{equation*}
where $\bm{s} \in \IZ^3$ represents the corresponding conjugacy class. Since $2 \bm{\b_1} + \bm{\b_2}, \bm{\b_1} + \bm{\b_3}, 4 \bm{\b_1} \in A_3$, the conjugacy classes $\bm{\mu_3} (\bm{s})$ remain invariant under shifts of $\bm{s}$ as follows
\begin{equation*}
\bm{\mu_3}(\bm{s}) \equiv 
\bm{\mu_3}(s_1+2p,s_2+p,s_3), \ 
\bm{\mu_3}(s_1+p,s_2,s_3+p), \ 
\bm{\mu_3}(s_1+4p,s_2,s_3) 
\pmod{A_3} .
\end{equation*}
So each conjugacy class can be uniquely represented as $\bm{\mu_3}(\bm{s})$ by restricting $\bm{s}$ to
$0 \leq s_1 \leq 4p-1$ and $0 \leq s_2, s_3 \leq p-1.$
Finally, in preparation for Lemma \ref{lem:EQMComplexRecursion}, 
we state how $A_3 + \bm{\mu_3} (\bm{s})$ can be decomposed into components in the linear span of a $\bm{\b_m}$ (for $m \in \{1,2,3\}$) and its orthogonal complement.

\begin{lem}
\label{lem:A_3_decomposition} 
\mbox{ } \nolisttopbreak
\begin{enumerate}[label={\rm(\arabic*)},wide,labelindent=0pt,leftmargin=*]
\item Decomposing $A_3 + \bm{\mu_3} (\bm{s})$ to parts in the linear span of $\bm{\b_1}$ and its orthogonal complement (spanned by $\bm{\a_2}$ and $\bm{\a_3}$) gives 
\begin{align*}
A_3 + \bm{\mu_3} (\bm{s}) =
\bigcup_{\g \in \{0,1,2\} } \Bigg( 
&4 \lp \IZ  + \frac{3s_1+2s_2+s_3+4\g p}{12p} \rp \bm{\b_1}
\\ & \qquad 
+ \lp \IZ + \frac{2(s_2-\g p) + s_3}{3p} \rp \bm{\a_2} + \lp \IZ + \frac{(s_2-\g p) +2 s_3}{3p} \rp \bm{\a_3}
\Bigg) .
\end{align*}
For fixed $\g$, the subset along $\bm{\a_2}$ and $\bm{\a_3}$ gives the shifted lattice $A_2 + \bm{\mu_2} \lp s_2 - \g p, s_3 \rp$.

\item Decomposing $A_3 + \bm{\mu_3} (\bm{s})$ to parts in the linear span of $\bm{\b_2}$ and its orthogonal complement (spanned by $\bm{\a_1}$ and $\bm{\a_3}$) we find
\begin{align*}
A_3 + \bm{\mu_3} (\bm{s}) =
\bigcup_{\d \in \{0,1\} } \Bigg( 
&2 \lp \IZ  + \frac{s_1+2s_2+s_3+2\d p}{4p} \rp \bm{\b_2}
\\ & \qquad 
+ \lp \IZ + \frac{s_1 - \d p}{2p} \rp \bm{\a_1}
+ \lp \IZ + \frac{s_3 - \d p}{2p} \rp \bm{\a_3}
\Bigg) .
\end{align*}
For fixed $\d$, the subset along $\bm{\a_1}$ gives the shifted lattice $A_1 + \bm{\mu_1} \lp s_1 - \d p \rp$ and the subset along $\bm{\a_3}$ gives an orthogonal shifted lattice $A_1 + \bm{\mu_1} \lp s_3 - \d p \rp$.

\item Decomposing $A_3 + \bm{\mu_3} (\bm{s})$ to parts in the linear span of $\bm{\b_3}$ and its orthogonal complement (spanned by $\bm{\a_1}$ and $\bm{\a_2}$) we find
\begin{align*}
A_3 + \bm{\mu_3} (\bm{s}) =
\bigcup_{\g \in \{0,1,2\} } \Bigg( 
&4 \lp \IZ  + \frac{s_1+2s_2+3s_3+4\g p}{12p} \rp \bm{\b_3}
\\ & \qquad 
+ \lp \IZ + \frac{2s_1 + (s_2-\g p)}{3p} \rp \bm{\a_1}
+ \lp \IZ + \frac{s_1 + 2(s_2-\g p)}{3p} \rp \bm{\a_2} 
\Bigg) .
\end{align*}
For fixed $\g$, the subset along $\bm{\a_1}$ and $\bm{\a_2}$ gives the shifted lattice $A_2 + \bm{\mu_2} \lp s_1, s_2 - \g p\rp$.
\end{enumerate}
\end{lem}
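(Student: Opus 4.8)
The plan is to follow the proof of Lemma~\ref{lem:A_2_decomposition} verbatim in spirit: write a general element of the coset $A_3 + \bm{\mu_3}(\bm{s})$ in the $\bm{\a}$-basis, re-express it in the basis given by the relevant fundamental weight $\bm{\b_m}$ together with the two simple roots spanning the orthogonal complement, and then reindex the three integer parameters so that each of the three (resp.\ two) resulting families acquires an explicit fractional offset. Throughout, recall that $\bm{\mu_3}(\bm{s})$ has $\bm{\a}$-coordinates $\frac{1}{4p}(3s_1+2s_2+s_3,\,2s_1+4s_2+2s_3,\,s_1+2s_2+3s_3)$, so a general element is $\bm{n}=\sum_{j=1}^3(m_j+\mu_j)\bm{\a_j}$ with $m_j\in\IZ$ and $\mu_j$ the corresponding fraction.

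For part~(1) I would use $\bm{\a_1}=\frac13(4\bm{\b_1}-2\bm{\a_2}-\bm{\a_3})$ to rewrite $\bm{n}$ in the basis $(\bm{\b_1},\bm{\a_2},\bm{\a_3})$. The $\bm{\b_1}$-coefficient then takes the form $\tfrac{4m_1}{3}+(\text{fixed offset})$, so the substitution $m_1\mapsto 3m_1+\gamma$ with $m_1\in\IZ$, $\gamma\in\{0,1,2\}$ splits the coset into the three families claimed, and the shifts $m_2\mapsto m_2+2m_1$, $m_3\mapsto m_3+m_1$ remove the residual $m_1$-dependence in the $\bm{\a_2}$- and $\bm{\a_3}$-components; a short computation then gives the offsets $\frac{2(s_2-\gamma p)+s_3}{3p}$ and $\frac{(s_2-\gamma p)+2s_3}{3p}$, i.e.\ the data of $\bm{\mu_2}(s_2-\gamma p,s_3)$. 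Part~(2) is the same, now with $\bm{\a_2}=\bm{\b_2}-\frac12(\bm{\a_1}+\bm{\a_3})$, so the $\bm{\b_2}$-coefficient is $2m_2+(\text{offset})$; the substitution is $m_2\mapsto 2m_2+\delta$, $\delta\in\{0,1\}$, followed by $m_1\mapsto m_1+m_2$, $m_3\mapsto m_3+m_2$, which yields the two decoupled families $A_1+\bm{\mu_1}(s_1-\delta p)$ along $\bm{\a_1}$ and $A_1+\bm{\mu_1}(s_3-\delta p)$ along $\bm{\a_3}$. Part~(3) then follows from part~(1) by applying the diagram automorphism $\bm{\a_1}\leftrightarrow\bm{\a_3}$ (equivalently $s_1\leftrightarrow s_3$, $\bm{\b_1}\leftrightarrow\bm{\b_3}$), which fixes $\bm{\a_2}$, $\bm{\rho}$, and $Q_3$; alternatively one just repeats the computation with $\bm{\a_3}=\frac13(4\bm{\b_3}-\bm{\a_1}-2\bm{\a_2})$.

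The orthogonality assertions are immediate from $B_3(\bm{\b_m},\bm{\a_j})=\delta_{m,j}$, and the identification of the orthogonal-complement lattices is a matter of comparing Gram matrices: on the span of $(\bm{\a_2},\bm{\a_3})$ (resp.\ $(\bm{\a_1},\bm{\a_2})$) the bilinear form $B_3$ restricts to the $A_2$-form, giving $Q_2$, while in part~(2) the vectors $\bm{\a_1}$ and $\bm{\a_3}$ are mutually orthogonal and each spans an $A_1$ with form $Q_1$. I do not expect a genuine obstacle here; the only point requiring care is confirming that the reindexing $m_1\mapsto 3m_1+\gamma$ (resp.\ $m_2\mapsto 2m_2+\delta$) together with the subsequent integer shifts is a bijection onto the stated union. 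This is where one should double-check that the number of families equals the index of the sublattice $\IZ(4\bm{\b_1})\oplus\IZ\bm{\a_2}\oplus\IZ\bm{\a_3}$ (resp.\ $\IZ(2\bm{\b_2})\oplus\IZ\bm{\a_1}\oplus\IZ\bm{\a_3}$) in $A_3$, which is $3$ (resp.\ $2$) since the change-of-basis determinant in the $\bm{\a}$-coordinates is $\pm3$ (resp.\ $\pm2$), and that the offsets attached to distinct $\gamma$ (resp.\ $\delta$) are pairwise incongruent modulo that sublattice.
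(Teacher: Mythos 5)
Your proposal matches the paper's proof essentially verbatim: the same change of basis via $\bm{\a_1}=\frac13(4\bm{\b_1}-2\bm{\a_2}-\bm{\a_3})$ (resp. $\bm{\a_2}=\bm{\b_2}-\frac12(\bm{\a_1}+\bm{\a_3})$), the same reindexing $m_1\mapsto 3m_1+\g$ (resp. $m_2\mapsto 2m_2+\d$) followed by the same integer shifts, and the same label-exchange $1\leftrightarrow 3$ for part (3). The argument is correct and no different in substance from the paper's.
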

\begin{proof}
\mbox{ } \nolisttopbreak
\begin{enumerate}[label={\rm(\arabic*)},wide,labelindent=0pt]
\item Since $\bm{\b_1} = \frac{1}{4} \lp 3 \bm{\a_1} +2\bm{\a_2} + \bm{\a_3} \rp$, we have $\bm{\a_1} = \frac{4}{3} \bm{\b_1} - \frac{2}{3} \bm{\a_2} - \frac{1}{3}  \bm{\a_3}$ and we can write elements of $A_3 + \bm{\mu_3} (\bm{s})$ in the following form where  $\bm{m} \in \IZ^3$
\begin{align*}
& \lp m_1 + \frac{3s_1+2s_2+s_3}{4p} \rp \bm{\a_1}
+ \lp m_2 + \frac{s_1+2s_2+s_3}{2p} \rp \bm{\a_2} + \lp m_3 + \frac{s_1+2s_2+3s_3}{4p} \rp \bm{\a_3}
\\
& \qquad = 
4 \lp \frac{m_1}{3} + \frac{3s_1+2s_2+s_3}{12p} \rp \bm{\b_1}
+ \lp m_2 -\frac{2m_1}{3} + \frac{2s_2+s_3}{3p} \rp \bm{\a_2} 
+ \lp m_3 -\frac{m_1}{3}  + \frac{s_2+2s_3}{3p} \rp \bm{\a_3} .
\end{align*}
Making the change of variables $m_1 \mapsto 3m_1 + \g$ with $m_1 \in \IZ$ and $\g \in \{0,1,2\}$, and then changing $m_2 \mapsto m_2 + 2m_1$ and $m_3 \mapsto m_3+m_1$ we get
\begin{equation*}
4 \lp m_1 + \frac{3s_1+2s_2+s_3+4\g p}{12p} \rp \bm{\b_1}
+ \lp m_2  + \frac{2(s_2- \g p)+s_3}{3p} \rp \bm{\a_2} 
+ \lp m_3 + \frac{(s_2- \g p)+2s_3}{3p} \rp \bm{\a_3} ,
\end{equation*}
where $\bm{m}$ runs over $\IZ^3$ and $\g$ over integers modulo $3$.

\item Since $\bm{\b_2} = \frac{1}{2} \lp \bm{\a_1} +2\bm{\a_2} + \bm{\a_3} \rp$, we have $\bm{\a_2} =  \bm{\b_2} - \frac{1}{2} \bm{\a_1} - \frac{1}{2}  \bm{\a_3}$ and we can write elements of $A_3 + \bm{\mu_3} (\bm{s})$ in the following form where  $\bm{m} \in \IZ^3$
\begin{align*}
& \lp m_1 + \frac{3s_1+2s_2+s_3}{4p} \rp \bm{\a_1}
+ \lp m_2 + \frac{s_1+2s_2+s_3}{2p} \rp \bm{\a_2} + \lp m_3 + \frac{s_1+2s_2+3s_3}{4p} \rp \bm{\a_3}
\\
& \qquad = 
\lp m_1 -\frac{m_2}{2}+ \frac{s_1}{2p} \rp \bm{\a_1}
+ \lp m_2 + \frac{s_1+2s_2+s_3}{2p} \rp \bm{\b_2} 
+ \lp m_3 -\frac{m_2}{2}+ \frac{s_3}{2p}  \rp \bm{\a_3} .
\end{align*}
Making the change of variables $m_2 \mapsto 2m_2 + \d$ with $m_2 \in \IZ$ and $\d \in \{0,1\}$, and then changing $m_1 \mapsto m_1 + m_2$ and $m_3 \mapsto m_3+m_2$ we get
\begin{equation*}
\lp m_1 + \frac{s_1-\d p}{2p} \rp \bm{\a_1}
+ 2 \lp m_2 + \frac{s_1+2s_2+s_3+2\d p}{4p} \rp \bm{\b_2} 
+ \lp m_3 + \frac{s_3-\d p}{2p}  \rp \bm{\a_3} ,
\end{equation*}
where $\bm{m}$ runs over $\IZ^3$ and $\d$ over integers modulo $2$.

\item Exchanging the labels 1 and 3 in part (1) proves (3). \qedhere
\end{enumerate}
\end{proof}

We now start our study of depth three examples with
\begin{equation}\label{eq:false_completion_A_3}
\wh{\psi}^{[A_3]}_{p, \bm{s}} (\t,w) = \sum_{\bm{n} \in \IZ^3 + \bm{\mu_3} (\bm{s})}
E_{pQ_3,(\bm{\b_1},\bm{\b_2}, \bm{\b_3})} \lp -i \sqrt{i(w-\t)} \bm{n} \rp q^{p Q_3 (\bm{n})}.
\end{equation}

\begin{prop}\label{prop:A_3_false_theta}
\mbox{ }  \nolisttopbreak
\begin{enumerate}[label={\rm(\arabic*)},wide,labelindent=0pt,leftmargin=*]
\item The function $\wh{\psi}^{[A_3]}_{p,\bm{s}}$ has the modular transformation properties
\begin{align*}
\wh{\psi}^{[A_3]}_{p, \bm{s}} (\t+1,w+1) &= e^{ \frac{\pi i}{4p} \lp 3s_1^2 + 4s_2^2+3s_3^2+4s_1s_2+4s_1s_3+2s_1s_3 \rp }
\wh{\psi}^{[A_3]}_{p, \bm{s}} (\t,w) , 
\\
\wh{\psi}^{[A_3]}_{p, \bm{s}} \lp -\frac{1}{\t}, - \frac{1}{w} \rp &=
\chi_{\t,w} \frac{(-i \t)^{\frac{3}{2}}}{\sqrt{4p^3}} \sum_{r_1=0}^{4p-1} \sum_{r_2,r_3=0}^{p-1}
e^{-2 \pi i p B_3 ( \bm{\mu_3} (\bm{s}), \bm{\mu_3} (\bm{r}) )}
\wh{\psi}^{[A_3]}_{p, \bm{r}} (\t,w)   ,
\end{align*}
where
\begin{equation*}
p B_3 ( \bm{\mu_3} (\bm{s}), \bm{\mu_3} (\bm{r}) ) = \frac{1}{4p}
(s_1 \,\, s_2 \,\, s_3)
\mat{3 & 2 & 1 \\ 2 & 4 & 2 \\ 1 & 2 & 3}
\mat{r_1 \\ r_2 \\ r_3}.
\end{equation*}

\item \label{prop_part:A3_psi_boundary} We have 
\begin{align*}
\psi^{[A_3]}_{p, \bm{s}} (\t) =& 
\sum_{\bm{n} \in \IZ^3 + \bm{\mu_3} (\bm{s})}
\sgn (n_1) \sgn (n_2) \sgn (n_3) q^{p Q_3 (\bm{n})}
\\ & \qquad
+ \frac{2}{\pi} \arcsin \lp \frac{1}{\sqrt{3}} \rp  \d_{\frac{3s_1+2s_2+s_3}{4p},\frac{s_1+2s_2+s_3}{2p} \in \IZ}
\sum_{n \in \IZ + \frac{s_1+2s_2+3s_3}{4p}}
\sgn (n) q^{pn^2}
\\ & \qquad \quad 
+ \frac{2}{\pi} \arcsin \lp \frac{1}{\sqrt{3}} \rp   \d_{\frac{s_1+2s_2+s_3}{2p}, \frac{s_1+2s_2+3s_3}{4p} \in \IZ}
\sum_{n \in \IZ + \frac{3s_1+2s_2+s_3}{4p}} 
\sgn (n) q^{pn^2}
\\ &  \qquad \quad \quad
+ \frac{2}{\pi} \arcsin \lp \frac{1}{3} \rp  \d_{\frac{3s_1+2s_2+s_3}{4p},\frac{s_1+2s_2+3s_3}{4p} \in \IZ}
\sum_{n \in \IZ + \frac{s_1+2s_2+s_3}{2p}} 
\sgn (n) q^{pn^2}.
\end{align*}

\item We have the following Eichler integral representation
\begin{align*}
\wh{\psi}^{[A_3]}_{p, \bm{s}} (\t,w) &=
2 \sqrt{p} \int_\t^w  \frac{H_{p,\bm{s}} (\t,w_1)}{\sqrt{i (w_1-\t)}} dw_1,
\end{align*}
where
\begin{multline*}
\hspace{.4cm}H_{p,\bm{s}} (\t,w) = 
\sum_{\d \in \{0,1\}}
\vth^{[1]}_{2p,s_1+2s_2+s_3+2\d p} (w) 
 \wh{\psi}^{[A_1]}_{p,s_1-\d p} (\t, w) \wh{\psi}^{[A_1]}_{p,s_3-\d p} (\t, w)
\\ \hspace{.8cm} +
\sqrt{3} \sum_{\g \in \{0,1,2 \}}  \lp 
\vth^{[1]}_{6p,3s_1+2s_2+s_3+4\g p} (w) \wh{\psi}^{[A_2]}_{p,s_2-\g p, s_3} (\t, w)
+ \vth^{[1]}_{6p,s_1+2s_2+3s_3+4\g p} (w) \wh{\psi}^{[A_2]}_{p,s_1,s_2-\g p} (\t, w)  \rp .
\end{multline*}
\end{enumerate}
In summary, $\psi^{[A_3]}_{p, \bm{s}}$ forms a vector-valued false modular form of depth three and weight 
$\frac{3}{2}$.
\end{prop}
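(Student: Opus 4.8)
The plan is to apply Theorem \ref{co:modularity} to the rank three lattice $\LL = \sqrt{p} A_3$ with the specific matrix $M = (\bm{\b_1},\bm{\b_2},\bm{\b_3})$ of fundamental weights, and then to extract the three separate assertions of the proposition from the three parts of that theorem, with Lemma \ref{lem:A_3_decomposition} doing the bookkeeping needed to write everything as products of lower-rank completed false theta functions.

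\textbf{Part (1) (modular transformations).} First I would observe that $\wh{\psi}^{[A_3]}_{p,\bm{s}}(\t,w)$ is exactly $\wh{\Psi}_{pQ_3, \bm{\mu_3}(\bm{s}), \bm{0}, M}(\bm{0},\bm{0};\tau,w)$ in the notation of \eqref{eq:hat_psi_definition} (with characteristic vector $\bm{\ell} = \bm{0}$, which is allowed since $pQ_3(\bm{n})\in\IZ$ for $\bm{n}\in\sqrt{p}A_3$). Then parts (1) and (2) of Theorem \ref{co:modularity} give the $T$- and $S$-transformations directly. For the $T$-transformation I would compute $e^{2\pi i\, pQ_3(\bm{\mu_3}(\bm{s}))}$ using $Q_3(\bm{\mu_3}(\bm{s})) = \frac{1}{2p^2}\bm{s}^T\!\left(\smallmatrix 3 & 2 & 1 \\ 2 & 4 & 2 \\ 1 & 2 & 3\endsmallmatrix\right)\bm{s}$ (the Gram matrix of the $\bm{\b_i}$ in the $pQ_3$ normalization, obtained from $B_3(\bm{\b_j},\bm{\b_k})$), which reproduces the stated phase $e^{\frac{\pi i}{4p}(3s_1^2+4s_2^2+3s_3^2+\cdots)}$. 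For the $S$-transformation, the prefactor $\chi_{\t,w}^r(-i\t)^{n/2}/\sqrt{|\LL^*/\LL|}$ with $n=r=3$ and $|\LL^*/\LL| = p^3|A_3^*/A_3| = 4p^3$ gives $\chi_{\t,w}\,(-i\t)^{3/2}/\sqrt{4p^3}$; the character sum $e^{-2\pi i B(\bm{\mu},\bm{\nu})}$ over $\LL^*/\LL$ becomes the stated sum over $0\le r_1\le 4p-1$, $0\le r_2,r_3\le p-1$ once one checks that these ranges give a complete set of coset representatives (which is exactly the computation done just above the proposition using $2\bm{\b_1}+\bm{\b_2}, \bm{\b_1}+\bm{\b_3}, 4\bm{\b_1}\in A_3$), and $e^{-2\pi i p B_3(\bm{\mu_3}(\bm{s}),\bm{\mu_3}(\bm{r}))}$ with the displayed bilinear form.

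\textbf{Part (2) (boundary value).} This is Theorem \ref{co:modularity}(3) combined with Lemma \ref{la:generalized_error_asymptotic_form}: the limit is the leading false theta function $\sum \sgn(B_3(M,\bm{n}))q^{pQ_3(\bm{n})} = \sum\sgn(n_1)\sgn(n_2)\sgn(n_3)q^{pQ_3(\bm{n})}$ (using $B_3(\bm{\b_j},\bm{n}) = n_j$ in the $\bm{\a}$-basis) plus correction terms $\sum_{S\subsetneq[3]} E_{pQ_3,M_S}(\bm{0})\sum_{B_3(M_S,\bm{n})=\bm{0}}\sgn(B_3(M_{[3]\setminus S},\bm{n}))q^{pQ_3(\bm{n})}$. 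Here I would note that $E_{pQ_3,M_S}(\bm{0}) = 0$ for $|S|$ odd (Remark \ref{rem:E2_zero_value}), so only $|S|=2$ survives, i.e. $S\in\{\{1,2\},\{1,3\},\{2,3\}\}$, each contributing a term of the form $E_{pQ_3,(\bm{\b_i},\bm{\b_j})}(\bm{0})\,\delta_{n_i=n_j=0}\sum_{n}\sgn(n)q^{pn^2}$. The constraint $n_i=n_j=0$ forces the remaining coordinate onto the appropriate coset, producing the indicator functions $\delta_{\ldots\in\IZ}$; and $E_{pQ_3,(\bm{\b_i},\bm{\b_j})}(\bm{0}) = \frac{2}{\pi}\arcsin(B_3(\wh{\bm{\b}}_i,\wh{\bm{\b}}_j))$ by Remark \ref{rem:E2_zero_value}, where one computes $B_3(\wh{\bm{\b}}_1,\wh{\bm{\b}}_2) = B_3(\wh{\bm{\b}}_2,\wh{\bm{\b}}_3) = \tfrac{1}{\sqrt 3}$ and $B_3(\wh{\bm{\b}}_1,\wh{\bm{\b}}_3) = \tfrac13$ from the Gram matrix, matching the $\arcsin(1/\sqrt3)$ and $\arcsin(1/3)$ in the statement. (The pairing $\{1,2\}\leftrightarrow$ free coordinate $\sim\bm{\b_3}$, etc., matches the displayed indices.)

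\textbf{Part (3) (Eichler integral representation) and depth.} This is the most laborious step. I would apply Lemma \ref{lem:EQMComplexRecursion} with $r=3$: it expresses $E_{pQ_3,M}(-i\sqrt{i(w-\t)}\bm{n})$ as $E_{pQ_3,M}(\bm{0})$ — which vanishes since $r=3$ is odd — plus a sum over $k\in\{1,2,3\}$ of integrals $\int_\t^w B(\wh{\bm{\b}}_k,\bm{n})\frac{e^{\pi i B(\wh{\bm{\b}}_k,\bm{n})^2(w_1-\t)}}{\sqrt{i(w_1-\t)}}E_{pQ_3, M_{[3]\setminus\{k\}\perp\{k\}}}(-i\sqrt{i(w_1-\t)}\bm{n})\,dw_1$. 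Summing over $\bm{n}\in\IZ^3+\bm{\mu_3}(\bm{s})$ inside the integral and then using Lemma \ref{lem:A_3_decomposition}: for $k=2$ (projecting onto $\bm{\b_2}$), the lattice splits off a unary theta along $\bm{\b_2}$ and an orthogonal $A_1\times A_1$ along $\bm{\a_1},\bm{\a_3}$, so the $k=2$ contribution assembles into $\vth^{[1]}_{2p,s_1+2s_2+s_3+2\delta p}(w)\wh{\psi}^{[A_1]}_{p,s_1-\delta p}(\t,w)\wh{\psi}^{[A_1]}_{p,s_3-\delta p}(\t,w)$ summed over $\delta$ — exactly as in the $A_2$ computation \eqref{eq:A_2_erf_decomposed_to_A_1}, where the derivative factor $B(\wh{\bm{\b}}_2,\bm{n})$ together with $q^{\text{(theta exponent)}}$ produces $\vth^{[1]}$; for $k=1$ and $k=3$ the orthogonal complement is a rescaled $A_2$, and $E_{pQ_3, M_{[3]\setminus\{k\}\perp\{k\}}}$ (with two vectors) summed against the appropriate sublattice gives $\wh{\psi}^{[A_2]}_{p,\cdots}(\t,w)$, yielding the $\sqrt 3$-terms. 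Tracking the normalization constants ($\sqrt{2Q_k(\bm{m_k})}$ factors, the $\sqrt{3p/2}$ versus $\sqrt{p/2}$ etc.) is what produces the overall $2\sqrt p$ and the relative $\sqrt 3$. Finally, the depth assertion: part (3) of Theorem \ref{co:modularity} already states $\wh{\psi}^{[A_3]}$ is the completion of a weight $\tfrac32$, depth $3$ false modular form; alternatively the Eichler integral representation shows $\partial_w$ of the completion has the shape \eqref{eq:false_completion_derivative} with $\wh{g}_j$ equal to products of $\wh{\psi}^{[A_1]}$ (depth one) and $\wh{\psi}^{[A_2]}$ (depth two) — hence depth three — times weight $\tfrac32$ theta functions, which one can confirm inductively using Propositions \ref{prop:psiA1_properties} and \ref{prop:psiA2_properties}.

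The main obstacle I anticipate is the Part (3) bookkeeping: correctly matching the three choices $k\in\{1,2,3\}$ to the three decompositions in Lemma \ref{lem:A_3_decomposition}, keeping the coset labels ($s_2-\gamma p$, $s_1-\delta p$, etc.) consistent, and chasing all the $\sqrt{p}$, $\sqrt{2}$, $\sqrt{3}$ normalization constants through $\sqrt{2Q_k(\bm{m_k})}$ and the rescaling of projected lattice vectors so that the final prefactors $2\sqrt p$ and $\sqrt 3$ come out exactly right. Parts (1) and (2) are essentially direct specializations and should be routine once the Gram matrix $B_3(\bm{\b_j},\bm{\b_k})$ and the coset structure are recorded.
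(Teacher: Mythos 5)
Your proposal is correct and takes essentially the same route as the paper's proof: part (1) by specializing Theorem \ref{co:modularity} using the Gram matrix \eqref{eq:A_3_weight_inner_products}, part (2) from Lemma \ref{la:generalized_error_asymptotic_form} and Remark \ref{rem:E2_zero_value} with only the $|S|=2$ corrections surviving, and part (3) by inserting Lemma \ref{lem:EQMComplexRecursion} and summing over the lattice via the three decompositions of Lemma \ref{lem:A_3_decomposition}. The only blemish is a normalization typo in part (1): since $B_3(\bm{\b_j},\bm{\b_k})$ equals one quarter of the displayed integer matrix, the prefactor in your formula for $Q_3(\bm{\mu_3}(\bm{s}))$ should be $\frac{1}{8p^2}$ rather than $\frac{1}{2p^2}$, which is what is consistent with the phase $e^{\frac{\pi i}{4p}(\cdots)}$ you correctly state.
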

\begin{proof}
\mbox{ }  \nolisttopbreak
\begin{enumerate}[label={\rm(\arabic*)},wide, labelindent=0pt]
\item The modular transformations simply follow by specializing the general result in Theorem \ref{co:modularity} and using the relevant inner products of $\bm{\b_j}$'s (which can be read from their coordinates in $\bm{\a}$-basis)
\begin{equation}\label{eq:A_3_weight_inner_products}
\lp B_3(\bm{\b_j},\bm{\b_k}) \rp_{j,k\in\{1,3\}} = \frac 14 \mat{3 & 2 & 1 \\ 2 & 4 & 2 \\ 1 & 2 & 3 } .
\end{equation}

\item Specializing Lemma \ref{la:generalized_error_asymptotic_form} to three dimensions and using Remark \ref{rem:E2_zero_value}, we find (with $\bm{m_{j+3}}:=\bm{m_j}$)
\begin{multline*}
\hspace{.4cm}\lim_{t \to \infty} E_{Q,M} (t \bm{x}) 
= \sgn (B(\bm{m_1},\bm{x})) \sgn (B(\bm{m_2},\bm{x})) \sgn (B(\bm{m_3},\bm{x}))
\\ 
+\frac{2}{\pi} \sum_{j=1}^3 
\d_{B(\bm{m_j},\bm{x}) = B(\bm{m_{j+1}},\bm{x})=0} \, \sgn (B(\bm{m_{j+2}},\bm{x}))
\arcsin (B(\bm{\wh{m}_j},\bm{\wh{m}_{j+1}})).
\end{multline*}
According to equation \eqref{eq:A_3_weight_inner_products}, we have $B(\bm{\wh{m}_1},\bm{\wh{m}_2})  =B(\bm{\wh{m}_2},\bm{\wh{m}_3}) =\frac{1}{\sqrt{3}}$ and $B(\bm{\wh{m}_1},\bm{\wh{m}_3}) =\frac{1}{3}$.
We find the claim by using these values to take the limit $\lim_{w \to \t + i \infty} \wh{\psi}^{[A_3]}_{p, \bm{s}} (\t,w)$ in equation \eqref{eq:false_completion_A_3}.

\item Specializing the recursion relation in Lemma \ref{lem:EQMComplexRecursion} to our case, we find
\begin{align}
\label{eq:triple_error_decomposition}
&\hspace{.7cm}E_{p Q_3,(\bm{\b_1},\bm{\b_2},\bm{\b_3})} \left(-i \sqrt{i (w-\t)} \bm{x}\right) 
\\ & \qquad
=  \frac{2 \sqrt{p}}{\sqrt{3}}  \int_w^\t  
B_3(\bm{\b_1},\bm{x}) e^{\frac{4}{3} \pi i p B_3(\bm{\b_1},\bm{x})^2 (w_1-\t)}
E_{pQ_3,\lp \frac{2 \bm{\a_2} + \bm{\a_3} }{3},\frac{\bm{\a_2} + 2\bm{\a_3} }{3} \rp } \left(-i \sqrt{i (w_1-\t)} \bm{x}\right) 
\frac{d w_1}{\sqrt{i (w_1-\t)}}
\notag \\ & \qquad
+\sqrt{p} \int_w^\t  B_3(\bm{\b_2},\bm{x}) e^{\pi i p B_3(\bm{\b_2},\bm{x})^2 (w_1-\t)}
E_{pQ_3,\lp  \frac{ \bm{\a_1} }{2}, \frac{ \bm{\a_3} }{2} \rp} \left(-i \sqrt{i (w_1-\t)} \bm{x}\right) 
\frac{d w_1}{\sqrt{i (w_1-\t)}}
\notag \\ & \qquad
+ \frac{2 \sqrt{p}}{\sqrt{3}}  \int_w^\t  
B_3(\bm{\b_3},\bm{x}) e^{\frac{4}{3} \pi i p B_3(\bm{\b_3},\bm{x})^2 (w_1-\t)}
E_{pQ_3,\lp \frac{2 \bm{\a_1} + \bm{\a_2} }{3},\frac{\bm{\a_1} + 2\bm{\a_2} }{3} \rp } \left(-i \sqrt{i (w_1-\t)} \bm{x}\right) 
\frac{d w_1}{\sqrt{i (w_1-\t)}} .
\notag
\end{align}
Inserting this expression in \eqref{eq:false_completion_A_3} and summing over $\IZ^3 + \bm{\mu_3} (\bm{s})$ using parts (1), (2), and (3) of Lemma \ref{lem:A_3_decomposition} on the second, third, and fourth lines of \eqref{eq:triple_error_decomposition}, respectively, gives the stated result. \qedhere
\end{enumerate}
\end{proof}

Next, we study the properties of $\varphi^{[A_3]}_{p, \bm{s}}$. Define 
\begin{align*}
	&F_{p,\bm{s}} (\t,w) := 
	\ \sum_{\mathclap{\d \in \{0,1\}}} \vth^{[1]}_{2p,s_1+2s_2+s_3+2\d p} (w) 
	\lp  \wh{\psi}^{[A_1]}_{p,s_1-\d p} (\t, w) \wh{\varphi}^{[A_1]}_{p,s_3-\d p} (\t, w)
	+  \wh{\varphi}^{[A_1]}_{p,s_1-\d p} (\t, w) \wh{\psi}^{[A_1]}_{p,s_3-\d p} (\t, w) \rp
	\\ &  \quad +
	\sqrt{3} \sum_{\g \in \{0,1,2 \}}  \lp 
	\vth^{[1]}_{6p,3s_1+2s_2+s_3+4\g p} (w) \wh{\varphi}^{[A_2]}_{p,s_2-\g p, s_3} (\t, w)
	+ \vth^{[1]}_{6p,s_1+2s_2+3s_3+4\g p} (w) \wh{\varphi}^{[A_2]}_{p,s_1,s_2-\g p} (\t, w)  \rp,\\
	&G_{p,\bm{s}} (\t,w) := 
	2 \sum_{\d \in \{0,1\}} \vth_{2p,s_1+2s_2+s_3+2\d p} (w) 
	\wh{\psi}^{[A_1]}_{p,s_1-\d p} (\t, w) \wh{\psi}^{[A_1]}_{p,s_3-\d p} (\t, w)
	\\ &  \quad +
	\sqrt{3} \sum_{\g \in \{0,1,2 \}}  \lp 
	\vth_{6p,3s_1+2s_2+s_3+4\g p} (w) \wh{\psi}^{[A_2]}_{p,s_2-\g p, s_3} (\t, w)
	+ \vth_{6p,s_1+2s_2+3s_3+4\g p} (w) \wh{\psi}^{[A_2]}_{p,s_1,s_2-\g p} (\t, w)  \rp  .
\end{align*}
\begin{prop}\label{prop:PhiA3}
\mbox{ }  \nolisttopbreak
\begin{enumerate}[label={\rm(\arabic*)},wide,labelindent=0pt,leftmargin=*]
\item\label{PhiA3} The function $\wh{\varphi}_{p,\bm{s}}^{[A_3]}$ can be explicitly expressed as
\begin{align*}
\hspace{.75cm}\wh{\varphi}^{[A_3]}_{p, \bm{s}} (\t,w) &= \sum_{\bm{n} \in \IZ^3 + \bm{\mu_3} (\bm{s})}
E_{pQ_3,(\bm{\b_1},\bm{\b_2}, \bm{\b_3})} \lp -i \sqrt{i(w-\t)} \bm{n} \rp (n_1+n_2+n_3) q^{p Q_3 (\bm{n})}\\
& \qquad +\frac{i}{\pi} \frac{1}{\sqrt{ip(w-\t)}} 
\left(\sqrt{3} \sum_{\g \in \{0,1,2\} }\vth_{6p, 3s_1+2s_2+s_3+4p\g} (w) \wh{\psi}^{[A_2]}_{p,s_2-p\g,s_3} (\t, w) \right.\\
&\hspace{4cm}
\left.+2 \sum_{\d \in \{0,1\} }\vth_{2p, s_1+2s_2+s_3+2p\d} (w) \wh{\psi}^{[A_1]}_{p,s_1-p\d} (\t, w)
\wh{\psi}^{[A_1]}_{p,s_3-p\d} (\t, w) \right.\\[-2.5ex]
&\hspace{4cm}
\left.+\sqrt{3} \sum_{\g \in \{0,1,2\} }\vth_{6p, s_1+2s_2+3s_3+4p\g} (w) \wh{\psi}^{[A_2]}_{p,s_1,s_2-p\g} (\t, w) \right).
\end{align*}

\item The function $\wh{\varphi}^{[A_3]}_{p,\bm{s}}$ has the modular transformation properties
\begin{align*}
\wh{\varphi}^{[A_3]}_{p, \bm{s}} (\t+1,w+1) &= e^{2\pi i p Q_3 (\bm{\mu_3} (\bm{s})) }
\wh{\varphi}^{[A_3]}_{p, \bm{s}} (\t,w),  
\\
\wh{\varphi}^{[A_3]}_{p, \bm{s}} \lp -\frac{1}{\t}, - \frac{1}{w} \rp &=
\chi_{\t,w} \frac{e^{-\frac{3\pi i}{4}} \t^{\frac{5}{2}}}{\sqrt{4p^3}} \sum_{r_1=0}^{4p-1} \sum_{r_2,r_3=0}^{p-1}
e^{-2 \pi i p B_3 ( \bm{\mu_3} (\bm{s}), \bm{\mu_3} (\bm{r}) )}
\wh{\varphi}^{[A_3]}_{p, \bm{r}} (\t,w)   .
\end{align*}

\item\label{prop_part:A3_phi_boundary} We have 
\begin{align*}
\varphi^{[A_3]}_{p, \bm{s}} (\t) =& 
\sum_{\bm{n} \in \IZ^3 + \bm{\mu_3} (\bm{s})} 
\sgn (n_1) \sgn (n_2) \sgn (n_3) (n_1+n_2+n_3) q^{p Q_3 (\bm{n})}
\\ & \qquad
+ \frac{2}{\pi} \arcsin \lp \frac{1}{\sqrt{3}} \rp  \d_{\frac{3s_1+2s_2+s_3}{4p},\frac{s_1+2s_2+s_3}{2p} \in \IZ}
\sum_{n \in \IZ + \frac{s_1+2s_2+3s_3}{4p}} 
\sgn (n) n q^{pn^2}
\\ & \qquad \quad 
+ \frac{2}{\pi} \arcsin \lp \frac{1}{\sqrt{3}} \rp   \d_{\frac{s_1+2s_2+s_3}{2p}, \frac{s_1+2s_2+3s_3}{4p} \in \IZ}
\sum_{n \in \IZ + \frac{3s_1+2s_2+s_3}{4p}} 
\sgn (n) n q^{pn^2}
\\ &  \qquad \quad \quad
+ \frac{2}{\pi} \arcsin \lp \frac{1}{3} \rp  \d_{\frac{3s_1+2s_2+s_3}{4p},\frac{s_1+2s_2+3s_3}{4p} \in \IZ}
\sum_{n \in \IZ + \frac{s_1+2s_2+s_3}{2p}} 
\sgn (n) n q^{pn^2}.
\end{align*}

\item \label{PhiA3_Eichler} We have the following representation in terms of Eichler integrals:
\begin{align*}
\wh{\varphi}_{p,\bm{s}}^{[A_3]} (\t, w) =& 
 2 \sqrt{p}\int_{\t}^{w} \frac{F_{p,\bm{s}}(\t,w_1)}{\sqrt{i(w_1-\t)}} dw_1
 +\frac{1}{2\pi \sqrt{p}} \rint{\t}{w} \frac{G_{p,\bm{s}}(\t,w_1)}{(i(w_1-\t))^\frac32} dw_1,
\end{align*}
where we use the regularized integral defined in equation \eqref{eq:regularized_integral}.
\end{enumerate}
In summary, $\varphi^{[A_3]}_{p, \bm{s}}$ forms a vector-valued false modular form of depth three and weight 
$\frac{5}{2}$.
\end{prop}
\begin{proof}
\mbox{ }  \nolisttopbreak

\begin{enumerate}[label={\rm(\arabic*)},wide,labelindent=0pt]
\item We first use Lemma \ref{lem:generalized_error_derivative} with $\bm{\rho} = \frac{3}{2} \bm{\a_1} + 2 \bm{\a_2} + \frac{3}{2} \bm{\a_3}$ to get
\begin{align*}
&\frac{1}{2\pi i p}\left[\bm{\rho}^T\bm{\partial_{z}} \lp E_{pQ_{3}, (\bm{\b_1},\bm{\b_2},\bm{\b_3})} \left(-i\left(\bm{n}+\frac{\bm{\zz}-\bm{z}}{w-\t}\right)\sqrt{i(w-\t)}\right)\rp \right]_{\bm{z}=\bm{\zz}=\bm{0}}
\\ & \qquad
= \frac{i}{\pi} \frac{1}{\sqrt{ip(w-\t)}} 
\Bigg( 
\sqrt{3} e^{\frac{4\pi i p}{3}   B_{3}(\bm{\b_1},\bm{n})^2 (w-\t)}
E_{pQ_{3}, \lp  \frac{2 \bm{\a_2} + \bm{\a_3} }{3},  \frac{\bm{\a_2} +2 \bm{\a_3} }{3} \rp} \left(-i \bm{n} \sqrt{i(w-\t)}  \right) 
\\ & \qquad \qquad  \qquad \qquad \qquad
+2 e^{\pi i p  B_{3}(\bm{\b_2},\bm{n})^2 (w-\t)}
E_{pQ_{3}, \lp   \frac{\bm{\a_1}}{2}, \frac{\bm{\a_3}}{2}  \rp} \left(-i \bm{n} \sqrt{i(w-\t)}  \right) 
\\ & \qquad \qquad  \qquad  \qquad \qquad \qquad
+\sqrt{3} e^{\frac{4\pi i p}{3}   B_{3}(\bm{\b_3},\bm{n})^2 (w-\t)}
E_{pQ_{3}, \lp  \frac{2 \bm{\a_1} + \bm{\a_2} }{3},  \frac{\bm{\a_1} +2 \bm{\a_2} }{3} \rp} \left(-i \bm{n} \sqrt{i(w-\t)}  \right) 
\Bigg).
\end{align*}
Multiplying with $q^{p Q_3 (\bm{n})}$ and summing over $\bm{n}$ using Lemma \ref{lem:A_3_decomposition} gives the second, third, and fourth lines of the claimed identity. The contribution on the first line is obtained using the relation $\frac{1}{2\pi i p} [ \bm{\rho}^T\bm{\partial_{z}}\lp e^{2\pi i pB_{3}(\bm{n},\bm{z})}\rp]_{\bm{z} = \bm{0}}=B_{3}(\bm{n},\bm{\rho})$.

\item The modular transformations follow from Theorem \ref{co:modularity} and Lemma \ref{la:derivativeLimit}. 

\item Taking $w\to \t+i\infty$ in part \ref{PhiA2} using Lemma \ref{la:generalized_error_asymptotic_form} 
(as in Proposition \ref{prop:A_3_false_theta} \ref{prop_part:A3_psi_boundary})
gives the claim.

\item We insert equation \eqref{eq:triple_error_decomposition} into the contribution on the first line of part \ref{PhiA3}.
We start with the contribution of the first term on the right-hand side of  \eqref{eq:triple_error_decomposition}
\begin{align*}
I_{\bm{s}} &:= 
\frac{2 \sqrt{p}}{\sqrt{3}}  \int_w^\t   \sum_{\bm{n} \in \IZ^3 + \bm{\mu_3} (\bm{s})} B_3(\bm{\b_1},\bm{n})  B_3(\bm{\rho},\bm{n}) 
e^{\frac{4}{3} \pi i p B_3(\bm{\b_1},\bm{n})^2 (w_1-\t)}
\\ & \qquad \qquad \qquad 
\times E_{pQ_3,\lp \frac{2 \bm{\a_2} + \bm{\a_3} }{3},\frac{\bm{\a_2} + 2\bm{\a_3} }{3} \rp } \left(-i \bm{n} \sqrt{i (w_1-\t)} \right)  q^{p Q_3 (\bm{n})}
\frac{d w_1}{\sqrt{i (w_1-\t)}}.
\end{align*}
Noting that $\bm{\rho} = 2 \bm{\b_1} + \bm{\a_2}+\bm{\a_3}$ and taking the sum over $\IZ^3 + \bm{\mu_3} (\bm{s})$ using the first part of Lemma \ref{lem:A_3_decomposition} and Proposition \ref{prop:PhiA2} (1) we get
\begin{align*}
I_{\bm{s}} &= \frac{\sqrt{3}}{\pi i \sqrt{p}} \sum_{\g \in \{0,1,2\} } \int_w^\t \frac{1}{\sqrt{i (w_1-\t)}}
 \vth'_{6p,3s_1+2s_2+s_3+4\g p} (w_1)  \wh{\psi}^{[A_2]}_{p,s_2 - \g p, s_3} (\t, w_1) d w_1
\\ &
+ 2 \sqrt{3p}\sum_{\g \in \{0,1,2\} } \int_w^\t \frac{1}{\sqrt{i (w_1-\t)}} \vth^{[1]}_{6p,3s_1+2s_2+s_3+4\g p} (w_1) \bigg(
\wh{\varphi}^{[A_2]}_{p,s_2 - \g p, s_3} (\t, w_1) - \frac{i}{\pi} \frac{\sqrt{3}}{\sqrt{2ip(w_1-\t)}}
\\ & 
\times \  \sum_{\mathclap{\d \in \{0,1\} } } \lp
\vth_{3p,s_2+2s_3-\g p+3\d p} (w_1) \wh{\psi}^{[A_1]}_{p,s_2-\g p-\d p} (\t, w_1)
+ \vth_{3p,2s_2+s_3-2\g p+3\d p} (w_1) \wh{\psi}^{[A_1]}_{p,s_3-\d p} (\t, w_1) 
\rp \bigg) d w_1 .
\end{align*}
We use Proposition \ref{prop:psiA2_properties}
to integrate by parts and get\footnote{Here we also note Remarks \ref{rem:A2_varphi_Eichler_convergence} and \ref{rem:A2_limiting_value} in justifying the convergence of these integrals.}
\begin{align*}
&\int_\t^w \frac{1}{\sqrt{i (w_1-\t)}}
\frac{\del}{\del w_1} \lp \vth_{6p,3s_1+2s_2+s_3+4\g p} (w_1) \rp \wh{\psi}^{[A_2]}_{p,s_2 - \g p, s_3} (\t, w_1) d w_1
\\
&\quad =
\frac{\vth_{6p,3s_1+2s_2+s_3+4\g p} (w) \wh{\psi}^{[A_2]}_{p,s_2 - \g p, s_3} (\t, w)}{\sqrt{i (w-\t)}}
+
\frac{i}{2} \rint{\t}{w} \frac{ \vth_{6p,3s_1+2s_2+s_3+4\g p} (w_1) \wh{\psi}^{[A_2]}_{p,s_2 - \g p, s_3} (\t, w_1)}{(i(w_1-\t))^{\frac{3}{2}}} dw_1 
\\
& \qquad \quad 
+i \sqrt{6p}  \int_\t^w \frac{\vth_{6p,3s_1+2s_2+s_3+4\g p} (w_1) h_{p,s_2-\g p,s_3} (\t,w_1)}{ w_1-\t } d w_1 .
\end{align*}
Plugging this expression back (and noting the definition of $h_{p,\bm{s}}$ from Proposition \ref{prop:psiA2_properties})  we find $I_{\bm{s}} = I^{[1]}_{\bm{s}} + I^{[2]}_{\bm{s}} + I^{[3]}_{\bm{s}} + I^{[4]}_{\bm{s}}$ where
\begin{align*}
&I^{[1]}_{\bm{s}} := 
\frac{\sqrt{3}}{2\pi \sqrt{p}} \sum_{\g \in \{0,1,2\} } \rint{\t}{w}  \frac{ \vth_{6p,3s_1+2s_2+s_3+4\g p} (w_1) \wh{\psi}^{[A_2]}_{p,s_2 - \g p, s_3} (\t, w_1)}{\left(i(w_1-\t)\right)^{\frac{3}{2}}} dw_1 
\\ & \qquad \qquad \qquad \qquad \qquad \qquad
+
2 \sqrt{3p}\sum_{\g \in \{0,1,2\} } \int_\t^w \frac{\vth^{[1]}_{6p,3s_1+2s_2+s_3+4\g p} (w_1)  \wh{\varphi}^{[A_2]}_{p,s_2 - \g p, s_3} (\t, w_1) }{\sqrt{i (w_1-\t)}}  d w_1 ,\\
&I^{[2]}_{\bm{s}} := - \frac{i \sqrt{3}}{\pi \sqrt{p}} \sum_{\g \in \{0,1,2\} }
\frac{\vth_{6p,3s_1+2s_2+s_3+4\g p} (w) \wh{\psi}^{[A_2]}_{p,s_2 - \g p, s_3} (\t, w)}{\sqrt{i (w-\t)}},\\
&I^{[3]}_{\bm{s}} :=  \frac{3 \sqrt{2}}{\pi} \sum_{\substack{ \g \in \{0,1,2\} \\ \d \in \{0,1\}  }}
\int_\t^w
\frac{ \wh{\psi}^{[A_1]}_{p,s_3-\d p}(\t, w_1) }{w_1 - \t} 
\Big(\vth_{6p,3s_1+2s_2+s_3+4\g p} (w_1) \vth^{[1]}_{3p,2s_2+s_3-2\g p+3\d p} (w_1)
\\
&\hspace{7.4cm}
- \vth^{[1]}_{6p,3s_1+2s_2+s_3+4\g p} (w_1) \vth_{3p,2s_2+s_3-2\g p+3\d p} (w_1) \Big) d w_1,
\\
&I^{[4]}_{\bm{s}} :=  \frac{3 \sqrt{2}}{\pi} \sum_{\substack{ \g \in \{0,1,2\} \\ \d \in \{0,1\}  }}
\int_\t^w
\frac{ \wh{\psi}^{[A_1]}_{p,s_2-\g p-\d p}(\t, w_1) }{w_1 - \t} \Big(
\vth_{6p,3s_1+2s_2+s_3+4\g p} (w_1) \vth^{[1]}_{3p,s_2+2s_3-\g p+3\d p} (w_1)
\\
&\hspace{7.4cm}
- \vth^{[1]}_{6p,3s_1+2s_2+s_3+4\g p} (w_1) \vth_{3p,s_2+2s_3-\g p+3\d p} (w_1)  \Big) d w_1 .
\end{align*}
Next we study the contribution of the second term on the right-hand side of \eqref{eq:triple_error_decomposition}
\begin{align*}
J_{\bm{s}} &:= 
\sqrt{p}  \int_\t^w   \sum_{\bm{n} \in \IZ^3 + \bm{\mu_3} (\bm{s})}
B_3(\bm{\b_2},\bm{n})  B_3(\bm{\rho},\bm{n}) 
e^{\pi i p B_3(\bm{\b_2},\bm{n})^2 (w_1-\t)}
\\ & \qquad \qquad \qquad   \qquad \qquad 
\times E_{pQ_3,\lp \frac{\bm{\a_1}}{2} , \frac{\bm{\a_3}}{2} \rp } \left(-i \sqrt{i (w_1-\t)} \bm{n}\right)  q^{p Q_3 (\bm{n})}
\frac{d w_1}{\sqrt{i (w_1-\t)}}.
\end{align*}
Noting that $\bm{\rho} = 2 \bm{\b_2} + \frac{\bm{\a_1}}{2}+\frac{\bm{\a_3}}{2}$ and taking the sum over $\IZ^3 + \bm{\mu_3} (\bm{s})$ using Lemma \ref{lem:A_3_decomposition} (2) and Proposition \ref{prop:PhiA1} (1) we get
\begin{align*}
J_{\bm{s}} &=\frac{2}{\pi i \sqrt{p}} \sum_{\d \in \{0,1\} } \int_\t^w
 \vth'_{2p,s_1+2s_2+s_3+2\d p} (w_1)  
\wh{\psi}^{[A_1]}_{p,s_1 - \d p} (\t, w_1) \wh{\psi}^{[A_1]}_{p,s_3 - \d p} (\t, w_1)
\frac{d w_1}{\sqrt{i (w_1-\t)}}
\\ &\ 
+ 2 \sqrt{p}\sum_{\d \in \{0,1\} } \int_\t^w 
\vth^{[1]}_{2p,s_1+2s_2+s_3+2\d p} (w_1) \Bigg(
\wh{\psi}^{[A_1]}_{p,s_1 - \d p} (\t, w_1)
\lp \wh{\varphi}^{[A_1]}_{p,s_3 - \d p}(\t,w_1) -\frac{i}{\pi \sqrt{2p}} \frac{\vth_{p,s_3-\d p} (w_1)}{\sqrt{i (w_1-\t)}}  \rp
\\ &  \qquad \qquad  \qquad \qquad  \qquad 
+\wh{\psi}^{[A_1]}_{p,s_3 - \d p} (\t, w_1)
\lp \wh{\varphi}^{[A_1]}_{p,s_1 - \d p}(\t,w_1) -\frac{i}{\pi \sqrt{2p}} \frac{\vth_{p,s_1-\d p} (w_1)}{\sqrt{i (w_1-\t)}}  \rp
\Bigg)
\frac{d w_1}{\sqrt{i (w_1-\t)}}.
\end{align*}
Next we integrate by parts using Proposition \ref{prop:psiA1_properties} \ref{Psi1AsIntegral} to find
\begin{align*}
&\int_\t^w  \vth'_{2p,s_1+2s_2+s_3+2\d p} (w_1)  
\wh{\psi}^{[A_1]}_{p,s_1 - \d p} (\t, w_1) \wh{\psi}^{[A_1]}_{p,s_3 - \d p} (\t, w_1)
\frac{d w_1}{\sqrt{i (w_1-\t)}}
\\
& =
\frac{ \vth_{2p,s_1+2s_2+s_3+2\d p} (w) 
\wh{\psi}^{[A_1]}_{p,s_1 - \d p} (\t, w) \wh{\psi}^{[A_1]}_{p,s_3 - \d p} (\t, w)  }{\sqrt{i (w-\t)}}
\\
& \ \  
+
\frac{i}{2} \int_{\t}^{w} \frac{ \vth_{2p,s_1+2s_2+s_3+2\d p} (w_1) 
\wh{\psi}^{[A_1]}_{p,s_1 - \d p} (\t, w_1) \wh{\psi}^{[A_1]}_{p,s_3 - \d p} (\t, w_1)  }{\left(i(w_1-\t)\right)^{\frac{3}{2}}} dw_1 
\\
& \ \ 
+i \sqrt{2p} \int_\t^w \frac{\vth_{2p,s_1+2s_2+s_3+2\d p} (w_1) }{ w_1-\t }
\lp
\vth^{[1]}_{p,s_1 - \d p} (w_1) \wh{\psi}^{[A_1]}_{p,s_3 - \d p} (\t, w_1) 
+  \wh{\psi}^{[A_1]}_{p,s_1 - \d p} (\t, w_1) \vth^{[1]}_{p,s_3 - \d p} (w_1)
\rp  d w_1 .
\end{align*}
Plugging this expression back we find $J_{\bm{s}} = J^{[1]}_{\bm{s}} + J^{[2]}_{\bm{s}} + J^{[3]}_{\bm{s}} + J^{[3]}_{s_3,s_2,s_1}$ where
\begin{align*}
J^{[1]}_{\bm{s}} &:=  
\frac{1}{\pi \sqrt{p}} \sum_{\d \in \{0,1\} }
\int_{\t}^{w} \frac{ \vth_{2p,s_1+2s_2+s_3+2\d p} (w_1) 
\wh{\psi}^{[A_1]}_{p,s_1 - \d p} (\t, w_1) \wh{\psi}^{[A_1]}_{p,s_3 - \d p} (\t, w_1)  }{\left(i(w_1-\t)\right)^{\frac{3}{2}}} dw_1
\\
& \quad+2 \sqrt{p}\sum_{\d \in \{0,1\} } \int_\t^w 
\frac{\vth^{[1]}_{2p,s_1+2s_2+s_3+2\d p} (w_1)}{\sqrt{i (w_1-\t)}} \\
&\hspace{3cm}\times
\lp
\wh{\psi}^{[A_1]}_{p,s_1 - \d p} (\t, w_1)
\wh{\varphi}^{[A_1]}_{p,s_3 - \d p} (\t,w_1)
+\wh{\psi}^{[A_1]}_{p,s_3 - \d p} (\t, w_1)
\wh{\varphi}^{[A_1]}_{p,s_1 - \d p} (\t,w_1)
\rp d w_1,
\\
J^{[2]}_{\bm{s}} &:= - \frac{2i}{\pi \sqrt{p}} \sum_{\d \in \{0,1\} }
\frac{ \vth_{2p,s_1+2s_2+s_3+2\d p} (w) 
\wh{\psi}^{[A_1]}_{p,s_1 - \d p} (\t, w) \wh{\psi}^{[A_1]}_{p,s_3 - \d p} (\t, w)  }{\sqrt{i (w-\t)}},
\\
J^{[3]}_{\bm{s}} &:=  \frac{ \sqrt{2}}{\pi} \sum_{\d \in \{0,1\}} \int_\t^w
\frac{ \wh{\psi}^{[A_1]}_{p,s_3-\d p}(\t, w_1) }{w_1 - \t} 
\\
&\hspace{1.7cm}\times\left(2 \vth_{2p,s_1+2s_2+s_3+2\d p} (w_1)  \vth^{[1]}_{p,s_1-\d p} (w_1)- \vth^{[1]}_{2p,s_1+2s_2+s_3+2\d p} (w_1)  \vth_{p,s_1-\d p} (w_1)  \right) d w_1.
\end{align*}
Finally exchanging the labels 1 and 3, we find that the contribution of the third term on the right-hand side of  \eqref{eq:triple_error_decomposition} is simply $I_{s_3,s_2,s_1}$.

In summary, our discussion above shows that the contribution on the first line of part \ref{PhiA3} can be replaced with 
$I_{\bm{s}} + J_{\bm{s}} + I_{s_3,s_2,s_1}$. 
The sum $I^{[2]}_{\bm{s}} + J^{[2]}_{\bm{s}} + I^{[2]}_{s_3,s_2,s_1}$ simply cancels the contributions on the second, third, and fourth lines of (1), whereas $I^{[1]}_{\bm{s}} + J^{[1]}_{\bm{s}} + I^{[1]}_{s_3,s_2,s_1}$ gives the expression in the claim. 
Therefore, the proof is complete if we demonstrate that the remaining contributions vanish. For this purpose, we next show that $I^{[4]}_{\bm{s}} + I^{[4]}_{s_3,s_2,s_1} = 0$ and $I^{[3]}_{\bm{s}}  + J^{[3]}_{\bm{s}}  = 0$ (with the latter also implying $I^{[3]}_{s_3,s_2,s_1}  + J^{[3]}_{s_3,s_2,s_1}  = 0$ by exchanging the labels $1$ and $3$).

We start with $I^{[4]}_{\bm{s}} + I^{[4]}_{s_3,s_2,s_1}$. First we rewrite $I^{[4]}_{\bm{s}}$, shifting $\d \mapsto \d +\g$,
\begin{align*}
I^{[4]}_{\bm{s}} =  \frac{3 \sqrt{2}}{\pi} \sum_{\d \pmod{2} }
\int_\t^w
\frac{ \wh{\psi}^{[A_1]}_{p,s_2-\d p}(\t, w_1) }{w_1 - \t} 
p_{\bm{s},\d} (w_1) dw_1,
\end{align*}
where
\begin{align*}
p_{\bm{s},\d} (\t) := 
\sum_{\g \pmod{3} }
\bigg(  
&\vth_{6p,3s_1+2s_2+s_3+4\g p} (\t) \vth^{[1]}_{3p,s_2+2s_3+2\g p+3\d p} (\t)
\\ & \qquad \qquad
- \vth^{[1]}_{6p,3s_1+2s_2+s_3+4\g p} (\t) \vth_{3p,s_2+2s_3+2\g p+3\d p} (\t)  \bigg).
\end{align*}
Our goal is then to show that $p_{\bm{s},\d} (\t) + p_{s_3,s_2,s_1,\d} (\t)$ vanishes for all integers $s_1,s_2,s_3,\d$.
Shifting $\g \mapsto \g +\d$ for $p_{s_3,s_2,s_1,\d} (\t)$, we rewrite $p_{\bm{s},\d} (\t) + p_{s_3,s_2,s_1,\d} (\t)$ as
\begin{align*}
&\sum_{\g \pmod{3} }
\bigg(  
\vth_{6p,3s_1+2s_2+s_3+4\g p} (\t) \vth^{[1]}_{3p,s_2+2s_3+2\g p+3\d p} (\t)
- \vth^{[1]}_{6p,3s_1+2s_2+s_3+4\g p} (\t) \vth_{3p,s_2+2s_3+2\g p+3\d p} (\t)  
\\
& \ 
+\vth_{6p,s_1+2s_2+3s_3+4\g p+4 \d p} (\t) \vth^{[1]}_{3p,2s_1+s_2+2\g p-\d p} (\t)
- \vth^{[1]}_{6p,s_1+2s_2+3s_3+4\g p+4 \d p} (\t) \vth_{3p,2s_1+s_2+2\g p-\d p} (\t)   \bigg).
\end{align*}
Now, let $r_1 := 3s_1+2s_2+s_3$ and $r_2 := s_2+2s_3+3\d p$ so that $r_1+r_2 \equiv 0 \pmod{3}$ and
\begin{equation*}
\frac{r_1+4r_2}{3} = s_1+2s_2+3s_3+4\d p
\qquad \mbox{and} \qquad
\frac{2r_1-r_2}{3} = 2s_1+s_2- \d p .
\end{equation*}
Then $p_{\bm{s},\d} (\t) + p_{s_3,s_2,s_1,\d} (\t)=P_{r_1,r_2} (\t) + P_{\frac{r_1+4r_2}{3},\frac{2r_1-r_2}{3}} (\t)$, where for any two integers $r_1,r_2$ with $r_1+r_2 \equiv 0 \pmod{3}$ we define
\begin{align*}
P_{r_1,r_2} (\t) &:= 
\sum_{\g \pmod{3} }
\lp
\vth_{6p,r_1+4\g p} (\t) \vth^{[1]}_{3p,r_2+2\g p} (\t) 
- \vth^{[1]}_{6p,r_1+4\g p} (\t) \vth_{3p,r_2+2\g p} (\t)  \rp 
\\
&=
\sum_{\g \pmod{3} } \sum_{\bm{n} \in \Z^2 + \lp \frac{r_1+4\g p}{12p},\frac{r_2+2 \g p}{6p} \rp}
(n_2 - n_1) q^{3p\left(2n_1^2+n_2^2\right) } .
\end{align*}
Shifting $n_1\mapsto n_1+n_2$ and collapsing the sum on $\gamma$ gives 
\begin{align*}
P_{r_1,r_2} (\t) =
- \sum_{\bm{n} \in \Z^2 + \lp \frac{r_1-2r_2}{12p},\frac{r_2}{2p} \rp} n_1  q^{p\left(6n_1^2+4n_1n_2+n_2^2\right) } .
\end{align*}
We now apply the involution $\bm{n} \mapsto (-n_1,n_2+4n_1)$ to obtain $P_{r_1,r_2} (\t)=
- P_{\frac{r_1+4r_2}{3},\frac{2r_1-r2}{3}} (\t)$, which implies $I^{[4]}_{\bm{s}} + I^{[4]}_{s_3,s_2,s_1}=0$.

Finally, we consider the contribution
\begin{equation*}
I^{[3]}_{\bm{s}}  + J^{[3]}_{\bm{s}} = 
\frac{ \sqrt{2}}{\pi} \sum_{\d \in \{0,1\}} \int_\t^w \frac{ \wh{\psi}^{[A_1]}_{p,s_3-\d p}(\t, w_1) }{w_1 - \t} 
\lp t_{1,\bm{s},\d} (w_1) + t_{2,\bm{s},\d} (w_1) \rp  d w_1,
\end{equation*}
where
\begin{align*}
t_{1,\bm{s},\d} (\t) &:=  3 \sum_{\g \in \{0,1,2\}}
\bigg(  \vth_{6p,3s_1+2s_2+s_3+4\g p} (\t) \vth^{[1]}_{3p,2s_2+s_3-2\g p+3\d p} (\t)
\\ & \qquad \qquad \qquad \qquad 
- \vth^{[1]}_{6p,3s_1+2s_2+s_3+4\g p} (\t) \vth_{3p,2s_2+s_3-2\g p+3\d p} (\t)  \bigg),\\
t_{2,\bm{s},\d} (\t) &:=
2 \vth_{2p,s_1+2s_2+s_3+2\d p} (\t)  \vth^{[1]}_{p,s_1-\d p} (\t)
- \vth^{[1]}_{2p,s_1+2s_2+s_3+2\d p} (\t)  \vth_{p,s_1-\d p} (\t)  .
\end{align*}
We start with 
\begin{align*}
t_{1,\bm{s},\d} (\t) &=  
3 \sum_{\g \in \{0,1,2\}}
\sum_{\bm{n} \in \Z^2 + \lp \frac{3s_1+2s_2+s_3+4\g p}{12p}, \frac{2s_2+s_3-2\g p+3\d p}{6p}  \rp }
(n_2-n_1) q^{3p\left(2n_1^2+n_2^2\right)},
\end{align*}
then shift $n_1\mapsto n_1-n_2$ and collapse the sum over $\g$ to obtain
\begin{align*}
t_{1,\bm{s},\d} (\t) &= 
\sum_{\bm{n} \in \Z^2 + \lp \frac{s_1+2s_2+s_3+2\d p}{4p}, \frac{2s_2+s_3+3\d p}{2p}   \rp }
(2n_2-3n_1) q^{p\left(6n_1^2-4n_1n_2+n_2^2\right)} .
\end{align*}
Then finally changing $n_2\mapsto 2n_1-n_2$ gives $t_{1,\bm{s},\d} (\t) = - t_{2,\bm{s},\d} (\t)$, thereby proving $I^{[3]}_{\bm{s}}  + J^{[3]}_{\bm{s}} = 0$.
\qedhere
\end{enumerate}
\end{proof}

\section{Higher Depth False Theta Functions from $\hat{Z}$-Invariants}\label{sec:Zhat_invariant}

As our final example, we consider higher rank generalizations of $\hat{Z}$-invariants \cite{GPPV} of plumbed $3$-manifolds that are recently proposed by Chung \cite{Chung} at least if the gauge group is  $G=\SU(n)$ with $n \geq 3$. Here we restrict our attention to the gauge group $\SU(3)$ and the case where the $3$-manifold in question is a Seifert fibration. Moreover, for simplicity of presentation, we only consider Brieskorn spheres so that there is only one homological block. Such a $3$-manifold is determined by a triple on integers $(P_1,P_2,P_3)$ where $P_j \geq 2$ are pairwise coprime. For instance, the ``smallest" triple $(2,3,5)$, which corresponds to a plumbing graph of type $E_8$, gives the celebrated Poincar\'e homology sphere.

We closely follow Chung's article \cite{Chung} where such invariants were explicitly computed. 
Let $M_3$ denote the Brieskorn sphere corresponding to the triple $(P_1,P_2,P_3)$ and let also $P=P_1 P_2 P_3$. We also assume
\begin{equation*} 
\frac{1}{P_1}+\frac{1}{P_2}+\frac{1}{P_3} <1.
\end{equation*}
According to  equation (3.15) in \cite{Chung}, the $\hat{Z}$-invariant of $M_3$ is given by the $q$-series
\begin{equation}\label{ZSU3}
\hat{Z}_{\SU(3)}(q)=
\sum_{\substack{n_{j,k} \geq 0 \\ 1 \leq j < k \leq 3}} \left( \prod_{1 \leq j < k \leq 3} \chi_{2P}(n_{j,k}) \right) 
q^{\frac{1}{8P} \left(2n_{1,2}^2+2n_{2,3}^2+2 n_{1,3}^2 +2n_{1,3} n_{2,3}+2n_{1,2} n_{2,3} -2n_{1,2} n_{1,3}\right)} .
\end{equation}
Here $\chi_{2P}(n)$ is defined as (see equation (2.17) of \cite{Chung})
\begin{equation*}
\chi_{2P}(n):=
\psi_{2P}^{\ell_1}(n) + \psi_{2P}^{\ell_2}(n) + \psi_{2P}^{\ell_3}(n) + \psi_{2P}^{\ell_4}(n),
\end{equation*}
where
\begin{equation*}
\psi_{2P}^{\ell}(n):=
\begin{cases}
\pm 1 \quad &  \mathrm{if}  \ n \equiv \pm \ell \pmod{2P},  \\
0 \quad & \mathrm{otherwise},
\end{cases}
\end{equation*}
and
\begin{align*} 
\ell_1:=P \left(1-\left(\frac{1}{P_1}+\frac{1}{P_2}+\frac{1}{P_3}\right) \right),   
& \qquad \ell_2:=P\left(1-\left(-\frac{1}{P_1}-\frac{1}{P_2}+\frac{1}{P_3}\right)\right), 
\\
\ell_3:=P \left(1-\left(\frac{1}{P_1}-\frac{1}{P_2}-\frac{1}{P_3}\right) \right) ,  
&  \qquad \ell_4:=P\left(1-\left(-\frac{1}{P_1}+\frac{1}{P_2}-\frac{1}{P_3}\right)\right) .
\end{align*}
Since $0 < \ell_j < 2P$ for all $j \in \{1,2,3,4\}$, we can rewrite \eqref{ZSU3} as 
\begin{equation*}
\hat{Z}_{\SU(3)}(q)
=
\sum_{\bm{a} \in \CA^3}  \sum_{\bm{s} \in \{-1,+1\}^3}  s_1 s_2 s_3
\sum_{\bm{n} \in \mathbb{N}^3 +  \lp \frac{1}{2} + s_1 a_1, \frac{1}{2} + s_2 a_2, \frac{1}{2} + s_3 a_3 \rp }
q^{\frac{P}{2} \lp  (n_1+n_2)^2 + (n_1 +n_3)^2 + (n_2-n_3)^2  \rp },
\end{equation*}
where
\begin{equation*}
\CA := \left\{ r_1,r_2,r_3,r_4 \right\} 
\quad \mbox{with} \  
r_j :=  \frac{\ell_j -P}{2P} .
\end{equation*}
The quadratic form appearing in this function is degenerate, so we can take the sum over one of the summation variables to get
\begin{align*}
\hat{Z}_{\SU(3)}(q) 
&=
\sum_{\bm{a} \in \CA^3}  \sum_{\bm{s} \in \{-1,1\}^3}  s_1 s_2 s_3
\sum_{m_1,m_2 \geq 0}  \min ( m_1,m_2) \,   q^{P Q_2 \lp  m_1+s_1a_1+s_2a_2, m_2+s_1a_1+s_3a_3 \rp } ,
\end{align*}
where $Q_2 (\bm{n}) = n_1^2 + n_2^2 - n_1 n_2$ is the quadratic form for the $A_2$-lattice as in Section \ref{sec:w_algebra_characters}. 
Now we note that $-1 < s_1a_1+s_2a_2, \, s_1a_1+s_3a_3 < 1$ and extend the sum over $\bm{m}$ to the entire $\IZ^2$ by inserting
\begin{align*}
\frac{1}{4} (1 + \sgn ( m_1+s_1a_1+s_2a_2) )  (1 + \sgn ( m_2+s_1a_1+s_3a_3 ) ) .
\end{align*}
Next we note that for $\bm{m} \in \IZ^2$ and any $-1 < \e < 1$ we have
\begin{equation*}
2 \min ( m_1,m_2) =  (m_1 +m_2) - \sgn (m_1-m_2 + \e) (m_1-m_2)
\end{equation*}
and use this identity with $\e := s_2 a_2 - s_3 a_3$ to get 
\begin{multline*}
\hat{Z}_{\SU(3)}(q) 
=
\frac{1}{8}
\sum_{\bm{a} \in \CA^3}  \sum_{\bm{s} \in \{\pm 1\}^3}  s_1 s_2 s_3
\sum_{\bm{m} \in \IZ^2 + \pmat{s_1a_1+s_2a_2 \\ s_1a_1+s_3a_3} } 
(1 + \sgn (m_1)  (1 + \sgn (m_2) )\\
\times ((m_1+m_2 - 2s_1 a_1 - s_2a_2-s_3 a_3)  - \sgn(m_1-m_2)(m_1 - m_2  - s_2a_2 + s_3 a_3) ) q^{P Q_2 (\bm{m} )} .
\end{multline*}
Changing variables as $\bm{m} \mapsto - \bm{m}$ and $\bm{s} \mapsto - \bm{s}$ in one half of the sum, we find that
\begin{align*}
\hat{Z}_{\SU(3)}(q) 
=
\frac{1}{8}
\sum_{\bm{a} \in \CA^3}  &\sum_{\bm{s} \in \{\pm 1\}^3}  s_1 s_2 s_3
\sum_{\bm{m} \in \IZ^2 + \pmat{s_1a_1+s_2a_2 \\ s_1a_1+s_3a_3} }  
q^{P Q_2 (\bm{m} )} 
\\
& \times
( (1 + \sgn (m_1)  \sgn (m_2) ) (m_1 +m_2 - 2s_1 a_1 - s_2a_2-s_3 a_3)
\\
& \qquad \qquad
- \sgn (m_1 - m_2)   (\sgn (m_1) + \sgn (m_2) ) (m_1 - m_2  - s_2a_2 + s_3 a_3)) .
\end{align*}
On the summand involving the factor $\sgn(m_1-m_2) \sgn(m_1)$ we apply the transformation
\begin{equation*}
g: \bm{m} \mapsto (m_1, m_1-m_2), \quad
\bm{s} \mapsto (s_2,s_1,-s_3), 
\quad \bm{a} \mapsto (a_2,a_1,a_3),
\end{equation*}
on the summand involving the factor $\sgn(m_1-m_2) \sgn(m_2)$ we apply 
\begin{equation*}
g': \bm{m} \mapsto (m_2-m_1, m_2), \quad
\bm{s} \mapsto (s_3,-s_2,s_1), 
\quad \bm{a} \mapsto (a_3,a_2,a_1),
\end{equation*}
and we average the summand with no sign function factors over the transformations $(\mathrm{Id}, g, g')$. This yields
\begin{align*}
\hat{Z}_{\SU(3)}(q) 
= \frac{1}{4}
\sum_{\bm{a} \in \CA^3}  \sum_{\bm{s} \in \{\pm 1\}^3}  s_1 s_2 s_3 \, G_{(s_1a_1+s_2a_2, s_1a_1+s_3a_3)} (\t),
\end{align*}
where (adopting notation from Section \ref{sec:w_algebra_characters})
\begin{align*}
G_{\bm{\mu}} (\t) := 
\sum_{\bm{m} \in \IZ^2 + \bm{\mu}}  
\sgn (m_1) \sgn (m_2) B_2 (\bm{\rho}, \bm{m} - \bm{\mu} ) q^{P Q_2 (\bm{m}) }.
\end{align*}
Noting that $G_{\bm{\mu}} (\t) = - G_{-\bm{\mu}} (\t)$ (as can be seen by changing $\bm{m} \mapsto - \bm{m}$) and $G_{\bm{\mu}} (\t) = G_{(\mu_2,\mu_1)} (\t)$ (by changing $\bm{m} \mapsto (m_2,m_1)$), we can take the sum over $\bm{s}$ to get
\begin{align*}
\hat{Z}_{\SU(3)}(q)  = \frac{1}{2}  
\sum_{\bm{a} \in \CA^3} \left(G_{(a_1+a_2,a_1+a_3)} (\t) - 2 G_{(a_1+a_2,a_1-a_3)} (\t)  + G_{(a_1-a_2,a_1-a_3)} (\t)\right).
\end{align*}
Now looking at Propositions \ref{prop:psiA2_properties} and \ref{prop:PhiA2}, we find (for the relevant values of $\bm{\mu}$)
\begin{equation*}
G_{\bm{\mu}} (\t) = \varphi_{P,\bm{S} (\bm{\mu})}^{[A_2]} (\t)
- B_2 (\bm{\rho}, \bm{\mu} ) \,
\psi_{P,\bm{S} (\bm{\mu})}^{[A_2]} (\t)
\where
\bm{S} (\bm{\mu}) := 
P(B_2(\bm{\mu}, \bm{\a_1}),B_2(\bm{\mu}, \bm{\a_2})) .
\end{equation*}
In summary, we explicitly see that these $\SU(3)$ $\hat{Z}$-invariants are linear combinations of depth two false modular forms $\psi^{[A_2]}_{p, \bm{s}} (\t)$ and $\varphi_{p,\bm{s}}^{[A_2]} (\t)$ studied in Section \ref{sec:w_algebra_characters}.

\begin{rem}
Recall that these two false modular forms are the same ones which appeared in the discussion of the characters of $W^0(P)_{A_2}$ vertex algebras. In fact, we can further clarify the relation between the $\SU(3)$ $\hat{Z}$-invariants here and $W^0(P)_{A_2}$ characters as follows: As noted in the discussion of Subsections \ref{sec:generalities_W_algebra} and \ref{sec:vac_character_A1_A2}, these characters look like (when $\bm{\wh{\l}} = \bm{\g} = \bm{0}$ and up to possible corrections involving sums over lower rank lattices)
\begin{equation*}
\frac{1}{4 \eta (\t)^2}  \mathbb{G}_{\frac{\bm{\bar{\l}}}{\sqrt{p}} -  \frac{\bm{\rho}}{p}} (\t)
\where \ 
\mathbb{G}_{\bm{\mu}} (\t) := 
\sum_{w \in W} (-1)^{\ell (w)} G_{w ( \bm{\mu} )} (\t) .
\end{equation*}
The $\SU(3)$ $\hat{Z}$-invariants considered above can be similarly written as 
\begin{align*}
\hat{Z}_{\SU(3)}(q)  &= 
\frac{1}{4}  \sum_{\bm{a} \in \{r_1,r_2,r_3,r_4\}^3}  \mathbb{G}_{(a_1+a_2,a_1+a_3)} (\t) 
\\ & \qquad
+\frac{1}{2} \lp \mathbb{G}_{(r_1-r_2,r_1-r_3)} (\t) + \mathbb{G}_{(r_1-r_2,r_1-r_4)} (\t)
+ \mathbb{G}_{(r_1-r_3,r_1-r_4)} (\t)  + \mathbb{G}_{(r_2-r_3,r_2-r_4)} (\t)  \rp .
\end{align*}
\end{rem}

\appendix
\section{A Lemma on the Kostant Partition Function of $A_3$}\label{sec:Kostant_A3_lemma}

In this section we give a proof of Lemma \ref{la:KostkaRewrite}.

\begin{proof}[Proof of Lemma \ref{la:KostkaRewrite}]
We start with the right-hand side and note that it can be explicitly expanded using the values of $(-1)^{\ell (w)} \mathbb{P}_3 ( w (\bm{\a}) )$ given in the following table as $w$ runs over the Weyl group $S_4$ of $A_3$ (where we denote $w$ as a permutation).

\begin{table}[h!]
\centering
\small
\begin{tabular}{c | c}
\toprule
$w \in W = S_4$   & $(-1)^{\ell (w)} \mathbb{P}_3 ( w (\bm{\a}) )$  \\ 
\hline 
$e, (14)(23)$ & $\hspace{-2em}\! \sgn ( n_1) \sgn (n_2) \sgn (n_3) \, n_2 (4 n_1 n_3 - 1)$ \\
$(23), (14)$ & $\hspace{-2em}\!\! -\sgn (n_1) \sgn (n_1-n_2+n_3)  \sgn (n_3) (n_1-n_2+n_3) (4 n_1 n_3 - 1)$ \\
$(234), (143)$ & $\!\!\!\!\! \sgn (n_1) \sgn (n_1-n_3) \sgn (n_2-n_3) (n_1-n_3) (4 n_1 (n_2-n_3) - 1)$ \\
$(1234), (13)$ & $\!\!\!\!\! \sgn (n_3) \sgn (n_1-n_3) \sgn (n_2-n_3) (n_1-n_3) (4 n_3 (n_3-n_2) - 1)$ \\
$(243), (142)$ & $\!\!\!\!\!  \sgn (n_1) \sgn (n_1-n_2+n_3) \sgn (n_1-n_2) (n_1-n_2+n_3) (4 n_1 (n_1-n_2) - 1)$ \\
$(12), (1324)$ & $\hspace{-1em}\!\!\! \sgn (n_1-n_2) \sgn (n_2) \sgn (n_3) \, n_2 (4 n_3 (n_2-n_1) - 1)$ \\
$(12)(34), (13)(24)$ & $\!\!\!\! \sgn (n_1-n_2) \sgn (n_2) \sgn (n_2-n_3) \, n_2 (4 (n_1-n_2) (n_2-n_3) + 1)$ \\
$(1243), (1342)$ & $\sgn (n_2-n_3) \sgn (n_1-n_2+n_3) \sgn (n_1-n_2) (n_1-n_2+n_3) (4 (n_2-n_1) (n_2-n_3) - 1)$ \\
$(123), (134)$ & $\!\!\!\!\! \sgn (n_2-n_3) \sgn (n_1-n_2+n_3) \sgn (n_3) (n_1-n_2+n_3) (4 (n_2-n_3) n_3 + 1)$ \\
$(34), (1423)$ & $\hspace{-1em} \!\!\! \sgn (n_1) \sgn (n_2) \sgn (n_2-n_3) \, n_2 (4 (n_3-n_2) n_1 + 1)$ \\
$(124), (132)$ & $\!\!\!\!\! \sgn (n_3) \sgn (n_1-n_3) \sgn (n_1-n_2) (n_1-n_3) (4 n_3 (n_1-n_2) + 1)$ \\
$(24), (1432)$ & $\!\!\!\!\! \sgn (n_1) \sgn (n_1-n_3) \sgn (n_1-n_2) (n_1-n_3) (4 n_1 (n_2-n_1) + 1)$ \\
\bottomrule
\end{tabular}
\end{table}

Since both sides of the claimed identity are odd with respect to the action of the Weyl reflections, it is enough to prove the result when $\bm{\a}$ is restricted to the fundamental Weyl chamber associated with the simple roots $\bm{\a_1}$, $\bm{\a_2}$, $\bm{\a_3}$. Moreover, for any boundary component of a Weyl chamber, there is a Weyl reflection leaving it invariant. Therefore, both sides of our expression vanish on the boundaries and we can restrict our attention to the interior of the fundamental Weyl chamber.
Recall that given any $\bm{\a} = n_1 \bm{\a_1} + n_2 \bm{\a_2} + n_3 \bm{\a_3}$, the interior of the fundamental Weyl chamber is the region where $B_3 (\bm{\a} , \bm{\a_j} ) > 0$ for all $j \in \{1,2,3\}$, i.e., it is where we have
\begin{equation*}
2n_1 - n_2 > 0, \quad
-n_1 + 2n_2 -n_3 > 0, \quad
-n_2 + 2n_3 > 0.
\end{equation*} 
Noting that $n_1, n_2, n_3, n_1-n_2+n_3 > 0$ in this region, the right-hand side can be simplified as
\begin{align*}
&\frac{1}{16} \sum_{w \in W} (-1)^{\ell (w)} \mathbb{P}_3 ( w (\bm{\a}) )
= P_1 (\bm{n}) + \sgn(n_1-n_3) \sgn(n_2-n_3) P_2 (\bm{n})
+ \sgn(n_1-n_2) P_3 (\bm{n})
\\
& \qquad
+ \sgn (n_1-n_2) \sgn (n_2-n_3) P_4 (\bm{n}) + \sgn (n_2-n_3) P_5 (\bm{n})
+ \sgn (n_1-n_2) \sgn (n_1-n_3) P_6 (\bm{n}),
\end{align*}
where
\begin{align*}
P_1 (\bm{n}) &:= \frac{1}{8} (2n_2-n_1-n_3)(4 n_1 n_3 - 1), \quad &
P_2 (\bm{n}) &:= \frac{1}{8} (n_1-n_3) (4 (n_1-n_3) (n_2-n_3) -2), \\
P_3 (\bm{n}) &:= \frac{1}{8} (n_1+n_3)(4(n_1-n_2)^2-1), \quad &
P_4 (\bm{n}) &:= \frac{1}{8} (2n_2-n_1-n_3)(4 (n_1-n_2) (n_2-n_3) + 1), \\
P_5 (\bm{n}) &:= - \frac{1}{8} (n_1+n_3)( 4(n_2-n_3)^2-1 ), \quad &
P_6 (\bm{n}) &:= - \frac{1}{8} (n_1-n_3) (4(n_1-n_2)(n_1-n_3)-2).
\end{align*}
Then considering all the possibilities for the ordering of $n_1,n_2,n_3$, we find that the right-hand side becomes the following piecewise polynomial in the interior of the fundamental Weyl chamber
\begin{align*}
\frac{1}{16} \sum_{w \in W} (-1)^{\ell (w)} \mathbb{P}_3 ( w (\bm{\a}) )  \! = \!
\begin{cases}
P_1 (\bm{n}) + P_2(\bm{n}) + P_3 (\bm{n}) + P_4 (\bm{n}) + P_5 (\bm{n}) + P_6 (\bm{n})
&  \!\!\!\mbox{if } n_1 > n_2 , \\
P_1 (\bm{n}) + P_2(\bm{n}) - P_3 (\bm{n}) + P_4 (\bm{n}) - P_5 (\bm{n}) + P_6 (\bm{n})
& \!\!\! \mbox{if } n_3> n_2 , \\
P_1 (\bm{n}) + P_2(\bm{n}) - P_3 (\bm{n}) - P_4 (\bm{n}) + P_5 (\bm{n}) - P_6 (\bm{n})
&  \!\!\! \mbox{if } n_2 > n_1 \geq n_3, \\
P_1 (\bm{n}) - P_2(\bm{n}) - P_3 (\bm{n}) - P_4 (\bm{n}) + P_5 (\bm{n}) + P_6 (\bm{n})
&  \!\!\! \mbox{if } n_2 > n_3 \geq n_1 .\\
\end{cases}
\end{align*}

Turning to the left-hand side of the claimed identity, Theorem 6.1 of \cite{KP} shows that it is equal to the following expression in the interior of the fundamental Weyl chamber
\begin{align*}
\sum_{w \in W} (-1)^{\ell (w)} K_3 \lp w ( \bm{\a} ) -  \bm{\rho} \rp  \! = \!
\begin{cases}
\frac{1}{2} (2n_2-n_1-n_3)(2n_3-n_2)(n_2+n_3-n_1)
&  \!\!\!\mbox{if } n_1 > n_2 , \\
\frac{1}{2} (2n_1-n_2)(2n_2-n_1-n_3)(n_1+n_2-n_3)
& \!\!\! \mbox{if } n_3> n_2 , \\
\frac{1}{2} (n_2 - 2n_3) \lp 3n_1^2 + 2n_2^2+n_3^2 - 5n_1n_2-n_2n_3-1 \rp 
&  \!\!\! \mbox{if } n_2 > n_1 \geq n_3, \\
\frac{1}{2} (n_2 - 2n_1) \lp n_1^2 + 2n_2^2+3n_3^2 - n_1n_2-5n_2n_3-1 \rp 
&  \!\!\! \mbox{if } n_2 > n_3 \geq n_1 .\\
\end{cases}
\end{align*}
Finally, the equality of these two expressions immediately follow from the equality of the stated polynomials in the relevant regions.
\end{proof}

A small variation of Lemma \ref{la:KostkaRewrite} replaces the linear factor in $\mathbb{P}_3 ( w (\bm{\a}) )$ as $n_2\mapsto\frac12(n_1+n_2+n_3)$ with the conclusion of the lemma still holding. This is thanks to the following result.

\begin{lem}\label{lem:konstant_A3_average2}
For $\mathbb{T}_3 (n_1 \bm{\a_1} + n_2 \bm{\a_2} + n_3 \bm{\a_3} ) :=   
 \sgn ( n_1 ) \, \sgn ( n_2 ) \, \sgn ( n_3 ) (n_1 - n_2 + n_3)$ and any $\bm{\a} \in A_3 + \bm{\rho}$ we have
\begin{align*}
\sum_{w \in W} (-1)^{\ell (w)} \mathbb{T}_3 ( w (\bm{\a}) ) = 0 .
\end{align*}
\end{lem}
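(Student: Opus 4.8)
The plan is to follow the strategy of the proof of Lemma~\ref{la:KostkaRewrite}. Set $S(\bm{\a}):=\sum_{w\in W}(-1)^{\ell(w)}\mathbb{T}_3(w(\bm{\a}))$. Reindexing the sum by $w\mapsto wv$ and using that $w\mapsto(-1)^{\ell(w)}$ is a character gives $S(v\bm{\a})=(-1)^{\ell(v)}S(\bm{\a})$ for all $v\in W$, so $S$ is alternating. Since every vector of the ambient real vector space is $W$-conjugate to a point of the closed fundamental Weyl chamber $\bar{C}:=\{\bm{\a}:B_3(\bm{\a},\bm{\a_j})\ge 0\text{ for }j=1,2,3\}$, it suffices to prove $S\equiv 0$ on $\bar{C}$. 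On a wall of $\bar{C}$ pointwise fixed by a reflection $s$ one has $S(\bm{\a})=S(s\bm{\a})=(-1)^{\ell(s)}S(\bm{\a})=-S(\bm{\a})$, hence $S$ vanishes there; so only the interior of $\bar{C}$ needs attention.

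On the interior of $\bar{C}$, writing $\bm{\a}=n_1\bm{\a_1}+n_2\bm{\a_2}+n_3\bm{\a_3}$ and $c_j:=B_3(\bm{\a},\bm{\a_j})>0$, inverting the Cartan matrix of $A_3$ gives $n_1=\tfrac14(3c_1+2c_2+c_3)$, $n_2=\tfrac12(c_1+2c_2+c_3)$, $n_3=\tfrac14(c_1+2c_2+3c_3)$, so that $n_1,n_2,n_3>0$ and $n_1-n_2+n_3=\tfrac12(c_1+c_2)>0$. Next I would compute the $\bm{\a}$-basis coordinates $(p_1,p_2,p_3)$ of $w(\bm{\a})$ for each $w\in S_4$ --- this is elementary, and the same data is already implicit in the table in the proof of Lemma~\ref{la:KostkaRewrite} --- and read off $(-1)^{\ell(w)}\mathbb{T}_3(w(\bm{\a}))=(-1)^{\ell(w)}\sgn(p_1)\sgn(p_2)\sgn(p_3)(p_1-p_2+p_3)$ entry by entry. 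On the interior of $\bar{C}$ the factors $\sgn(n_1),\sgn(n_2),\sgn(n_3),\sgn(n_1-n_2+n_3)$ are all $+1$, so each entry collapses to $\pm 1$ times a product of sign factors taken from $\{\sgn(n_1-n_2),\sgn(n_2-n_3),\sgn(n_1-n_3)\}$ times a linear form in $n_1,n_2,n_3$; summing the entries yields
\begin{align*}
S(\bm{\a})={}&\wt{P}_1+\sgn(n_1-n_3)\sgn(n_2-n_3)\wt{P}_2+\sgn(n_1-n_2)\wt{P}_3\\
&+\sgn(n_1-n_2)\sgn(n_2-n_3)\wt{P}_4+\sgn(n_2-n_3)\wt{P}_5+\sgn(n_1-n_2)\sgn(n_1-n_3)\wt{P}_6
\end{align*}
for explicit linear forms $\wt{P}_1,\dots,\wt{P}_6$, exactly parallel to the decomposition of $\tfrac1{16}\sum_{w}(-1)^{\ell(w)}\mathbb{P}_3(w(\bm{\a}))$ obtained in that proof.

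Finally I would partition the interior of $\bar{C}$ into the same four regions used there --- $n_1>n_2$, $n_3>n_2$, $n_2>n_1\ge n_3$, and $n_2>n_3\ge n_1$ --- which are disjoint and exhaustive (for instance $n_1>n_2$ forces $n_3<n_2$ inside $\bar{C}$, so the first two cannot overlap), resolve all sign factors, and verify that the resulting ordinary linear form vanishes identically in each of the four regions. The main obstacle is purely the bookkeeping of assembling the $\mathbb{T}_3$-table and checking the four linear identities; there is no conceptual difficulty, the computation being entirely parallel to, and lighter than, the one already done for $\mathbb{P}_3$.
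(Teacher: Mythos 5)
Your proposal is correct and follows essentially the same route as the paper: reduce to the interior of the fundamental Weyl chamber via the alternating property (with vanishing on the walls), expand the Weyl sum into the same six sign-weighted linear forms used for $\mathbb{P}_3$, and check the four regional identities. The only part you defer is the explicit finite computation, which the paper records as $T_3=T_5=0$ and $T_1=-T_2=T_4=-T_6=n_1-2n_2+n_3$, from which the vanishing in all four regions is immediate.
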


\begin{proof}
Following the proof of Lemma \ref{lem:konstant_A3_average2}, it is enough to prove the statement in the interior of the fundamental Weyl chamber. Here we have
\begin{align*}
\frac{1}{2} \sum_{w \in W} (-1)^{\ell (w)} \mathbb{T}_3 ( w (\bm{\a}) )  \! = \!
\begin{cases}
T_1 (\bm{n}) + T_2(\bm{n}) + T_3 (\bm{n}) + T_4 (\bm{n}) + T_5 (\bm{n}) + T_6 (\bm{n})
&  \!\!\!\mbox{if } n_1 > n_2 , \\
T_1 (\bm{n}) + T_2(\bm{n}) - T_3 (\bm{n}) + T_4 (\bm{n}) - T_5 (\bm{n}) + T_6 (\bm{n})
& \!\!\! \mbox{if } n_3> n_2 , \\
T_1 (\bm{n}) + T_2(\bm{n}) - T_3 (\bm{n}) - T_4 (\bm{n}) + T_5 (\bm{n}) - T_6 (\bm{n})
&  \!\!\! \mbox{if } n_2 > n_1 \geq n_3, \\
T_1 (\bm{n}) - T_2(\bm{n}) - T_3 (\bm{n}) - T_4 (\bm{n}) + T_5 (\bm{n}) + T_6 (\bm{n})
&  \!\!\! \mbox{if } n_2 > n_3 \geq n_1 ,\\
\end{cases}
\end{align*}
where
\begin{align*}
T_3 (\bm{n}) := T_5 (\bm{n}) := 0
\quad \mbox{ and } \quad
T_1 (\bm{n}) := - T_2 (\bm{n}) := T_4 (\bm{n}) := -T_6 (\bm{n}) := n_1 - 2n_2 + n_3 .
\end{align*}
The lemma statement immediately follows from these values.
\end{proof}

\addtocontents{toc}{\SkipTocEntry}

\end{document}